\def\thmCite{\@ifnextchar[{\@with}{\@without}}
\def\@with[#1]#2{{\normalfont\cite[#1]{#2}\;}}
\def\@without#1{{\normalfont\cite{#1}\;}}
\newenvironment{customProof}[1][Proof]{\let\oldProofName\proofname\renewcommand*{\proofname}{#1}\begin{proof}}{\end{proof}\renewcommand*{\proofname}{\oldProofName}}
\DeclareExpandableDocumentCommand{\IfNoValueOrEmptyTF}{mmm}
 {
  \IfNoValueTF{#1}{#2}
   {
    \tl_if_empty:nTF {#1} {#2} {#3}
   }
 }
\newcommand*{\valOrBlank}[1]{\IfNoValueOrEmptyTF{#1}{}{#1}}
\NewDocumentCommand{\mySum}{e{_^}}{%
	\sum_{\mathclap{\valOrBlank{#1}}}^{\mathclap{\valOrBlank{#2}}}\hspace{0.025cm}%
}
\NewDocumentCommand{\myCap}{e{_^}}{%
	\bigcap_{\mathclap{\valOrBlank{#1}}}^{\mathclap{\valOrBlank{#2}}}\hspace{0.05cm}%
}
\newcommand*{\defeq}{\hspace{-0.04cm}\mathrel{\rlap{%
                     \raisebox{0.4ex}{$\scriptstyle\m@th\cdot$}}%
                     \raisebox{-0.05ex}{$\scriptstyle\m@th\cdot$}}%
				 =}
  \NewDocumentCommand{\maxlength}{}{\__get_max_length:nN}
\newcommand{\myitem}[1]{%
\item[#1]\protected@edef\@currentlabel{#1}%
}
\theoremstyle:=definition,remark,plain\do{%
	\expandafter\g@addto@macro\csname th@\theoremstyle\endcsname{%
		\addtolength\thm@preskip\parskip
	}%
}
\let\oldproof\proof
\def\proof{\oldproof\unskip}
\def\footnoterule{
  \hrule \@width 9cm \kern 3\p@} 
\DeclareMathOperator{\Ann}{Ann}
\DeclareMathOperator{\Ass}{Ass}
\DeclareMathOperator{\Att}{Att}
\DeclareMathOperator{\depth}{depth}
\DeclareMathOperator{\Ext}{Ext}
\DeclareMathOperator{\hgt}{height}
\DeclareMathOperator{\im}{Im}
\DeclareMathOperator{\grade}{grade}
\DeclareMathOperator{\MinAss}{MinAss}
\DeclareMathOperator{\MinAtt}{MinAtt}
\DeclareMathOperator{\Spec}{Spec}
\DeclareMathOperator{\Supp}{Supp}
\DeclarePairedDelimiter{\smallAbs}{|}{|}
\newtheorem{theorem}{Theorem}[section]
\newtheorem{lemma}[theorem]{Lemma}
\newtheorem{corollary}[theorem]{Corollary}
\newtheorem{proposition}[theorem]{Proposition}
\newtheorem{definition}[theorem]{Definition}
\newtheorem{notation}[theorem]{Notation}
\newtheorem{example}[theorem]{Example}
\theoremstyle{remark}
\newtheorem*{note}{Note}
\newcommand{\code}[1]{{\normalfont\ttfamily #1}}
\newcommand{\blockComment}[1]{}
\DeclareTextFontCommand{\emph}{\boldmath\bfseries}
\newcommand{\citelink}[2]{{\hypersetup{linkbordercolor={0 1 0}}\hyperlink{cite.\therefsection @#1}{#2}}}
\newcommand{\citeMacaulay}{\citelink{M2}{\code{Macaulay2}}}
\title{Minimal Attached Primes of Local Cohomology Modules of Binomial Edge Ideals of Block Graphs}
\author{David Williams\footnote{The author was supported by EPSRC grant EP/T517835/1}}
\date{\vspace{-0.3cm}}
\renewcommand*{\@fnsymbol}[1]{\@arabic{#1}}
\begin{document}

\maketitle

\begin{abstract}\label{Abstract}
	\noindent We calculate the minimal attached primes of the local cohomology modules of the binomial edge ideals of block graphs. In particular, we obtain a combinatorial characterisation of which of these modules are non-vanishing.

 	\vspace{0.35\baselineskip}
	\noindent We also show that the main result of this paper follows from a recent result of Lax, Rinaldo, and Romeo (\cite[Theorem 3.2]{laxSequentiallyCohenMacaulayBinomial2024}), which was published independently during the writing of this paper. This provides a short alternative proof of our result.
\end{abstract}

\begin{tcolorbox}
	Unless specified otherwise, let
	\begin{equation*}
		R=k[x_1,\ldots,x_n,y_1,\ldots,y_n]
	\end{equation*}
	for some field $k$ (of arbitrary characteristic) and $n\geq 2$. We denote by $\mathfrak{m}$ the irrelevant ideal of $R$. Furthermore, set $\delta_{i,j}\defeq x_iy_j-x_jy_i$ for $1\leq i<j\leq n$.

	Throughout this paper, all graphs will be finite, simple and undirected. For a graph $G$, we denote by $V(G)$ and $E(G)$ the sets of vertices and edges $\{i,j\}$ of $G$ respectively. Unless specified otherwise, all graphs will have at most $n$ vertices, with vertex set $\{1,\ldots,\smallAbs{V(G)}\}$.
\end{tcolorbox}

\section{Preliminaries}

\subsection{Binomial Edge Ideals}

Our objects of study are the binomial edge ideals of graphs. These  were introduced in \cite{herzogBinomialEdgeIdeals2010}, and independently in \cite{ohtaniGraphsIdealsGenerated2011}.

We begin with their definition (most of this subsection appeared previously in \cite[Section 1]{williamsLFCoversBinomialEdge2023}):

\begin{definition}\label{BEIDef}
	Let $G$ be a graph. We define the \emph{binomial edge ideal} of $G$ as
	\begin{equation*}
		\mathcal{J}(G)\defeq(\delta_{i,j}:\{i,j\}\in E(G))
	\end{equation*}
	We may also denote this ideal by $\mathcal{J}_G$.
\end{definition}

There is a rich interplay between the algebraic properties of these ideals and the combinatorial properties of the corresponding graph. For an overview of some of these interactions, see \cite[Chapter 7]{herzogBinomialIdeals2018}.

A key fact about binomial edge ideals is the following:

\begin{theorem}\thmCite[Corollary 2.2]{herzogBinomialEdgeIdeals2010}
	Binomial edge ideals are radical.
\end{theorem}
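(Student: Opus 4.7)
The plan is to exhibit a Gröbner basis for $\mathcal{J}(G)$ whose initial ideal is squarefree, and then invoke the general principle that an ideal with a squarefree initial ideal must be radical. The latter is a standard flatness/Hilbert-function argument: since $\mathcal{J}(G) \subseteq \sqrt{\mathcal{J}(G)}$ and taking initial ideals is inclusion-preserving, one has $\operatorname{in}_<(\mathcal{J}(G)) \subseteq \operatorname{in}_<(\sqrt{\mathcal{J}(G)}) \subseteq \sqrt{\operatorname{in}_<(\mathcal{J}(G))}$, which collapses to an equality once the outer ideal is squarefree, forcing $\mathcal{J}(G) = \sqrt{\mathcal{J}(G)}$ by comparing Hilbert functions.

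The monomial order I would use is the lexicographic order on $R$ induced by $x_1 > \cdots > x_n > y_1 > \cdots > y_n$. With this choice, each generator $\delta_{i,j}$ with $i<j$ has leading monomial $x_i y_j$, which is already squarefree. However, the $\delta_{i,j}$ alone do not generally form a Gröbner basis, so I would introduce, for each pair of vertices $i<j$ of $G$ and each path $\pi\colon i=i_0,i_1,\ldots,i_r=j$ in $G$ satisfying a suitable admissibility condition (governing which intermediate vertices have index below $i$ or above $j$), a path binomial $f_\pi \in \mathcal{J}(G)$ whose leading monomial is a squarefree product of certain $x_{i_s}$'s and $y_{i_t}$'s read off from $\pi$.

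The core technical step is verifying that $\{f_\pi : \pi \text{ admissible}\}$ is a Gröbner basis via Buchberger's criterion: for any two admissible paths $\pi,\pi'$, the $S$-polynomial $S(f_\pi,f_{\pi'})$ must reduce to zero modulo this collection. This is the step I expect to be the main obstacle, because it requires a careful case analysis — organised by how the vertex sets of $\pi$ and $\pi'$ overlap and by the relative position of the endpoints — and in each case one must exhibit an explicit admissible path (obtained by concatenation or surgery on $\pi$ and $\pi'$) whose associated binomial performs the reduction. The combinatorial bookkeeping of admissibility under these path operations is the delicate part.

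Once the Gröbner basis has been established, $\operatorname{in}_<(\mathcal{J}(G))$ is generated by the squarefree monomials $\operatorname{in}_<(f_\pi)$, hence is a squarefree monomial ideal and therefore radical. Applying the initial-ideal principle above concludes the proof.
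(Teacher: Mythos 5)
The paper does not prove this statement itself; it is quoted as a known foundational fact, citing Corollary~2.2 of the Herzog--Hibi--Hreinsdóttir--Kahle--Rauh paper that introduced binomial edge ideals. Your proposal is a faithful reconstruction of the argument given in that cited source: the same lexicographic order with $x_1 > \cdots > x_n > y_1 > \cdots > y_n$, the same family of admissible-path binomials (written there as $u_\pi f_{ij}$) assembled into a reduced Gr\"obner basis (their Theorem~2.1), the same Buchberger-criterion verification as the technical core, and the same passage from a squarefree initial ideal to radicality via the inclusion/Hilbert-function argument. So this is essentially the same approach as the proof behind the citation.
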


In fact, an explicit description of the primary decomposition can be given by purely combinatorial means, but we must first introduce some notation:

\begin{notation}
	Let $G$ be a graph, and $S\subseteq V(G)$. Denote by $c_G(S)$ the number of connected components of $G\setminus S$ (we will just write $c(S)$ when there is no confusion). Then we set
	\begin{equation*}
		\mathcal{C}(G)\defeq\{S\subseteq V(G):\text{\normalfont$S=\varnothing$ or $c(S\setminus\{v\})<c(S)$ for all $v\in S$}\}
	\end{equation*}
	That is, when we ``add back'' any vertex in $S\in\mathcal{C}(G)$, it must reconnect some components of $G\setminus S$.
\end{notation}

\begin{notation}
	Let $G$ be a graph, and $S\subseteq V(G)$. Denote by $G_1,\ldots,G_{c(S)}$ the connected components of $G\setminus S$, and let $\tilde{G}_i$ be the complete graph with vertex set $V(G_i)$. Then we set
	\begin{equation*}
		P_S(G)\defeq(x_i,y_i:i\in S)+\mathcal{J}(\tilde{G}_1)+\cdots+\mathcal{J}(\tilde{G}_{c(S)})
	\end{equation*}
\end{notation}

\begin{proposition}
	$\mathcal{J}(K_S)$ is prime for any $S\subseteq\{1,\ldots,n\}$, and therefore $P_T(G)$ is prime for any graph $G$ and $T\subseteq V(G)$ also.
\end{proposition}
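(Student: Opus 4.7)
The plan is to handle the two claims separately, with the second following easily from the first once we set things up correctly.

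For the primality of $\mathcal{J}(K_S)$, the key observation is that when $G=K_S$ is complete on $S=\{i_1,\ldots,i_m\}$, the generators $\delta_{i_a,i_b}$ for $a<b$ are exactly the $2\times 2$ minors of the generic $2\times m$ matrix
\begin{equation*}
	\begin{pmatrix} x_{i_1} & x_{i_2} & \cdots & x_{i_m} \\ y_{i_1} & y_{i_2} & \cdots & y_{i_m}\end{pmatrix}.
\end{equation*}
To verify primality directly (without invoking the general theory of determinantal ideals), I would write down the parametrisation $\varphi\colon R\to k[s,t,u_1,\ldots,u_m,x_i,y_i:i\notin S]$ defined by $x_{i_a}\mapsto s\,u_a$, $y_{i_a}\mapsto t\,u_a$, and $x_i,y_i\mapsto x_i,y_i$ for $i\notin S$. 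Each generator $\delta_{i_a,i_b}=x_{i_a}y_{i_b}-x_{i_b}y_{i_a}$ is clearly killed by $\varphi$, so $\mathcal{J}(K_S)\subseteq\ker\varphi$; since $\ker\varphi$ is prime (as the target is a domain), it suffices to show equality. The main obstacle is this reverse inclusion: one has to show that any polynomial vanishing under $\varphi$ lies in the minors ideal. I would do this by a standard Gr\"obner-style reduction --- pick the diagonal (or lex) term order, observe that the generators $\delta_{i_a,i_b}$ form a Gr\"obner basis (their leading terms $x_{i_a}y_{i_b}$ are pairwise coprime for $a<b$, so all $S$-pairs reduce to zero by Buchberger's criterion), and then check that the quotient by the initial ideal has the same Hilbert function as the image of $\varphi$.

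Alternatively, if we are willing to cite \cite{herzogBinomialEdgeIdeals2010} more heavily, the primality of $\mathcal{J}(K_S)$ is essentially implicit in their primary decomposition result, but since the proposition is being used to \emph{build} that decomposition, I would prefer the self-contained parametrisation argument above.

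For the general case of $P_T(G)$, write $T\subseteq V(G)$ and let $G_1,\ldots,G_{c(T)}$ be the components of $G\setminus T$ with vertex sets $V_1,\ldots,V_{c(T)}$, which are pairwise disjoint and disjoint from $T$. Since the ideal $(x_i,y_i:i\in T)$ simply kills those variables,
\begin{equation*}
	R/P_T(G)\;\cong\;\bigl(k[x_i,y_i:i\in V_1]/\mathcal{J}(\tilde{G}_1)\bigr)\otimes_k\cdots\otimes_k\bigl(k[x_i,y_i:i\in V_{c(T)}]/\mathcal{J}(\tilde{G}_{c(T)})\bigr)\otimes_k k[x_i,y_i:i\notin T\cup\bigcup V_j],
\end{equation*}
where the last tensor factor accounts for any variables not appearing in $P_T(G)$ at all. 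Each $\tilde{G}_j$ is a complete graph, so by the first part each factor is a domain; a tensor product of finitely many finitely generated $k$-algebra domains over a field $k$ is again a domain (this uses that $k$ is a field, so no issues with zero divisors arise). Hence $R/P_T(G)$ is a domain and $P_T(G)$ is prime.
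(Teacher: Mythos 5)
The paper's own proof is a one-line citation to \cite[Theorem 2.10]{brunsDeterminantalRings1988}: the ideal of $2\times 2$ minors of a generic $2\times m$ matrix is prime over any domain, and $P_T(G)$ inherits primality since its generators involve disjoint sets of variables. Your parametrisation strategy is a reasonable self-contained alternative, but both key steps contain real errors as written.

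First, the leading terms $x_{i_a}y_{i_b}$ of the $\delta_{i_a,i_b}$ are \emph{not} pairwise coprime: $\delta_{1,2}$ and $\delta_{1,3}$ have leading terms $x_1y_2$ and $x_1y_3$, which share the factor $x_1$, so Buchberger's coprimality criterion does not apply. The $2\times 2$ minors of a generic $2\times m$ matrix do form a Gr\"obner basis under a diagonal order, but that is a genuine theorem needing an actual $S$-pair computation or the straightening law. And even granted the Gr\"obner basis, you still need a further argument (e.g.\ a dimension count, or localizing at some $x_{i_a}$ to see that $\mathcal{J}(K_S)$ is prime after inverting $x_{i_a}$ and that $x_{i_a}$ is a nonzerodivisor) to upgrade $\mathcal{J}(K_S)\subseteq\ker\varphi$ to equality; the Hilbert-function comparison you gesture at is not carried out.

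Second, a tensor product of finitely generated $k$-algebra domains over a field is \emph{not} a domain in general: $\mathbb{C}\otimes_{\mathbb{R}}\mathbb{C}\cong\mathbb{C}\times\mathbb{C}$. What is needed is that each factor be \emph{geometrically} integral over $k$, i.e.\ remain a domain after arbitrary field extension; this does hold for determinantal rings of $2\times 2$ minors (it is precisely what the Bruns--Vetter theorem gives over any base field), so the conclusion is salvageable, but not for the reason you state. Since the paper explicitly works over a field $k$ of arbitrary characteristic, this is not a point that can be waved away with ``$k$ is a field, so no issues with zero divisors arise.''
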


\begin{proof}
	This follows from \cite[Theorem 2.10]{brunsDeterminantalRings1988}.
\end{proof}

\begin{theorem}\thmCite[Corollary 3.9]{herzogBinomialEdgeIdeals2010}\label{BEIPD}
	Let $G$ be a graph. Then
	\begin{equation*}
		\mathcal{J}(G)=\myCap_{S\in\mathcal{C}(G)}P_S(G)
	\end{equation*}
	is the primary decomposition of $\mathcal{J}(G)$.
\end{theorem}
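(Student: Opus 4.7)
The plan is to exploit two results already in the excerpt: each $P_S(G)$ is prime (by the preceding Proposition) and $\mathcal{J}(G)$ is radical (by the preceding Theorem). Together these reduce the primary-decomposition claim to two tasks: (i) the set-theoretic equality $\mathcal{J}(G) = \bigcap_{S \in \mathcal{C}(G)} P_S(G)$, and (ii) irredundancy, i.e.\ that no $P_S(G) \subseteq P_T(G)$ for distinct $S, T \in \mathcal{C}(G)$.

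For the forward inclusion in (i), I would prove the stronger statement $\mathcal{J}(G) \subseteq P_S(G)$ for \emph{every} $S \subseteq V(G)$, by checking each generator $\delta_{i,j}$. If either endpoint lies in $S$, then $\delta_{i,j}$ belongs to the linear part $(x_u, y_u : u \in S)$ of $P_S(G)$; otherwise $i$ and $j$ are adjacent vertices of $G \setminus S$, hence lie in a common connected component $G_k$, so $\delta_{i,j} \in \mathcal{J}(\tilde{G}_k) \subseteq P_S(G)$. The reverse inclusion is the crux, and I would establish it by induction on $|V(G)|$ via the radical-ideal identity
\[
\mathcal{J}(G) \;=\; \bigl(\mathcal{J}(G) : (x_v, y_v)^\infty\bigr) \cap \bigl(\mathcal{J}(G) + (x_v, y_v)\bigr),
\]
valid for any chosen vertex $v$ (and true for any radical ideal). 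The second factor descends modulo $(x_v, y_v)$ to $\mathcal{J}(G \setminus \{v\})$ in a polynomial ring with two fewer variables, so the inductive hypothesis applies directly. For the saturation, I would localise at $y_v$: the binomials $\delta_{v,j} = x_v y_j - x_j y_v$ for $j$ a neighbour of $v$ allow me to eliminate each $x_j$ as $x_v y_j / y_v$, effectively ``contracting'' $v$ into the remainder of its component and again reducing the vertex count (a symmetric argument inverts $x_v$ when $y_v$ is in the relevant prime). The main obstacle here is tracking exactly which subsets $S$ arise from the two branches and verifying they coincide precisely with $\mathcal{C}(G)$ — i.e., that the cut-point condition is exactly what is preserved by the induction.

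For (ii), I would pick distinct $S, T \in \mathcal{C}(G)$ and exhibit an element of $P_S(G) \setminus P_T(G)$. If $S \not\subseteq T$, any $v \in S \setminus T$ yields $x_v \in P_S(G)$, and inspection of degrees and variables among the generators of $P_T(G)$ shows $x_v \notin P_T(G)$. Otherwise $S \subsetneq T$, and the cut-point property of $T$ supplies a vertex $v \in T \setminus S$ whose removal from $T$ reconnects two components of $G \setminus T$ via neighbours $i, j$ of $v$; these $i, j$ then share a component of $G \setminus S$ (namely the one containing $v$), so $\delta_{i,j} \in \mathcal{J}(\tilde{G}_k) \subseteq P_S(G)$, while $\delta_{i,j} \notin P_T(G)$ because $i$ and $j$ lie in distinct components of $G \setminus T$ and therefore remain algebraically independent modulo $P_T(G)$. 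Combined with (i) and the opening reduction, this upgrades the set-theoretic equality to the stated primary decomposition.
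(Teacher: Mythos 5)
The paper does not prove this result; it is cited directly from Herzog et al.\ \cite[Corollary 3.9]{herzogBinomialEdgeIdeals2010}, so there is no internal proof for your attempt to match. Evaluating your sketch on its own merits: the opening reduction (radicality of $\mathcal{J}(G)$ plus primality of $P_S(G)$ reduce the claim to set-theoretic equality plus irredundancy), the forward inclusion $\mathcal{J}(G)\subseteq P_S(G)$ for all $S$, and the irredundancy argument are all correct and cleanly argued.

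The reverse inclusion, however, which you yourself flag as ``the crux,'' is where the argument has a genuine gap. The radical-ideal identity $I=(I:\mathfrak{a}^{\infty})\cap(I+\mathfrak{a})$ is indeed valid for $\mathfrak{a}=(x_v,y_v)$, and the factor $\mathcal{J}(G)+(x_v,y_v)=\mathcal{J}(G\setminus\{v\})+(x_v,y_v)$ is handled by induction, but the saturation factor $(\mathcal{J}(G):(x_v,y_v)^{\infty})$ is not something your sketch actually computes. Localising at $y_v$ and ``eliminating each $x_j$ as $x_v y_j/y_v$'' does not obviously produce the binomial edge ideal of a graph on fewer vertices, and the parenthetical ``a symmetric argument inverts $x_v$ when $y_v$ is in the relevant prime'' suggests you would need to make different substitutions at different primes --- but the saturation is a single ideal of $R$, not something you can compute prime-by-prime before you know its decomposition. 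Without an explicit description of $(\mathcal{J}(G):(x_v,y_v)^{\infty})$ to which the inductive hypothesis applies, the induction does not close. (Note in this connection that when $v$ is a \emph{cut} vertex, Ohtani's lemma \cite[Lemma 4.8]{ohtaniGraphsIdealsGenerated2011}, quoted in \S\ref{BGESSection}, gives $\mathcal{J}_G=\mathcal{J}_{G_v}\cap(\mathcal{J}_G+(x_v,y_v))$ with $G_v$ a graph on the \emph{same} vertex set, so the vertex count does not drop --- which is exactly why the present paper has to induct on cut vertices and restrict to block graphs in \cref{ExtAssContainmentProp}.) There is a second, more benign issue: your induction would naturally produce the intersection indexed by $\mathcal{C}(G\setminus\{v\})$ shifted by $\{v\}$, which is generally a strictly larger collection than $\{S\in\mathcal{C}(G):v\in S\}$; the set-theoretic equality survives this, but the redundancy has to be discharged afterwards (as you partly anticipate). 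The cleaner route, which is closer to what Herzog et al.\ actually do, is to prove $\mathcal{J}(G)=\bigcap_{S\subseteq V(G)}P_S(G)$ by directly analysing an arbitrary minimal prime $\mathfrak{p}\supseteq\mathcal{J}(G)$ (set $T=\{i:x_i,y_i\in\mathfrak{p}\}$ and use the structure of determinantal ideals modulo the linear variables to force $\mathfrak{p}=P_T(G)$), and then show separately that $P_S$ is minimal in this family precisely when $S\in\mathcal{C}(G)$ --- your irredundancy argument supplies exactly the second half of that.
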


We will also make use of the following:

\begin{lemma}\thmCite[Lemma 3.1]{herzogBinomialEdgeIdeals2010}\label{HeightLemma}
	Let $G$ be a graph, and $S\in\mathcal{C}(G)$. Then
	\begin{equation*}
		\hgt_R(P_S(G))=\smallAbs{S}+n-c(S)
	\end{equation*}
\end{lemma}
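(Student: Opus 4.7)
The plan is to exploit the fact that $P_S(G)$ decomposes as a sum of ideals whose generators involve pairwise disjoint sets of variables, and then use additivity of Krull dimension under tensor products of finitely generated $k$-algebras.

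First I would write $P_S(G) = I_S + \mathcal{J}(\tilde{G}_1) + \cdots + \mathcal{J}(\tilde{G}_{c(S)})$ with $I_S \defeq (x_i, y_i : i \in S)$, and observe that since $S$ together with the vertex sets $V(G_1), \ldots, V(G_{c(S)})$ partitions $V(G)$, the generators of these summands involve pairwise disjoint groups of the $x_i, y_i$. Writing $m_j \defeq \smallAbs{V(G_j)}$, this gives a $k$-algebra isomorphism
\begin{equation*}
	R/P_S(G) \cong \bigotimes_{j=1}^{c(S)} k[x_i, y_i : i \in V(G_j)]/\mathcal{J}(\tilde{G}_j),
\end{equation*}
so $\dim R/P_S(G)$ equals the sum of the Krull dimensions of the factors on the right.

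To compute those dimensions I would invoke the classical determinantal result that $\mathcal{J}(K_m)$ — being the ideal of $2 \times 2$ minors of a generic $2 \times m$ matrix — has height $m - 1$ in the ambient polynomial ring on those $2m$ variables. Combined with primality (which follows from the proposition above), each factor $k[x_i, y_i : i \in V(G_j)]/\mathcal{J}(\tilde{G}_j)$ is then a domain of dimension $m_j + 1$. Summing over $j$, and using $\sum_j m_j = n - \smallAbs{S}$, yields $\dim R/P_S(G) = (n - \smallAbs{S}) + c(S)$.

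Finally, since $P_S(G)$ is prime in the polynomial ring $R$, one has $\hgt_R P_S(G) = \dim R - \dim R/P_S(G) = 2n - ((n - \smallAbs{S}) + c(S)) = \smallAbs{S} + n - c(S)$, as required. The only substantive input is the classical height formula for the ideal of $2 \times 2$ minors of a generic matrix; everything else is combinatorial bookkeeping together with the clean structural observation that the summands of $P_S(G)$ live in disjoint variable sets, so that the quotient decomposes as a tensor product over $k$.
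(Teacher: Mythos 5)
Your proof is correct. The paper cites this lemma to Herzog, Hibi, Hreinsdóttir, Kahle, and Rauh without reproducing a proof, but your argument — decomposing $R/P_S(G)$ as a tensor product of determinantal rings over the partition $S\sqcup V(G_1)\sqcup\cdots\sqcup V(G_{c(S)})$ of $V(G)$, invoking the classical height $m_j-1$ for the ideal of $2\times 2$ minors of a generic $2\times m_j$ matrix, using additivity of Krull dimension for tensor products of affine $k$-algebras, and then $\hgt_R(\mathfrak{p})+\dim(R/\mathfrak{p})=2n$ for the prime $P_S(G)$ — is the standard one and matches the approach in the cited source.
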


\subsection{Secondary Representations \& Attached Primes}

Secondary representations were introduced independently by \cite{kirbyCoprimaryDecompositionArtinian1973} (there called ``coprimary decompositions''), Macdonald in \cite{macdonaldSecondaryRepresentationModules1973}, and Moore in \cite{moorePrimaryCoprimaryDecompositions1973} (again as ``coprimary decompositions''), with Kirby's being the first paper received.

We begin with their definitions:

\begin{definition}
	Let $R$ be a ring. We say that a non-zero $R$-module $S$ is \emph{secondary} if, for each $r\in R$, either $rS=S$ or $r^mS=0$ for some $m\geq1$.

	In this case, $\mathfrak{p}=\sqrt{\Ann_R(S)}$ is prime, and we say that $S$ is \emph{$\mathfrak{p}$-secondary}.
\end{definition}

\begin{definition}
	Let $R$ be a ring and $M$ an $R$-module. If there exist $\mathfrak{p}_i$-secondary $R$-submodules $S_i$ of $M$ such that
	\begin{equation*}
		M=S_1+\cdots+S_t
	\end{equation*}
	for some $t\geq1$, then we call this a \emph{secondary representation} of $M$, and $M$ is said to be \emph{representable}.

	Such a representation is said to be \emph{minimal} when
	\begin{enumerate}[label=\roman*)]
		\item The $\mathfrak{p}_i$ are all distinct.
		\item For every $1\leq i\leq t$, we have
		\begin{equation*}
			S_i\nsubseteq\sum_{\mathclap{\substack{j=1\\j\neq i}}}^tS_j
		\end{equation*}
	\end{enumerate}
\end{definition}

\begin{note}
	It is easily shown that the sum of any two $\mathfrak{p}$-secondary modules is $\mathfrak{p}$-secondary, and so any secondary representation can be refined to be minimal.
\end{note}

There are several uniqueness theorems concerning secondary representations, we will need only one for our purposes:

\begin{theorem}\thmCite[2.2]{macdonaldSecondaryRepresentationModules1973}
	Let $R$ be a ring, $M$ a representable $R$-module, and $S_i$ the $\mathfrak{p}_i$-secondary summands in a minimal secondary representation of $M$ for some $t\geq1$. Then both $t$ and the set $\{\mathfrak{p}_1,\ldots,\mathfrak{p}_t\}$ are independent of the choice of minimal secondary representation.
\end{theorem}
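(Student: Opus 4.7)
The plan is to characterise the set $\{\mathfrak{p}_1,\ldots,\mathfrak{p}_t\}$ intrinsically in terms of $M$, by directly comparing two fixed minimal secondary representations $M = S_1+\cdots+S_t = T_1+\cdots+T_s$, where $S_i$ is $\mathfrak{p}_i$-secondary and $T_j$ is $\mathfrak{q}_j$-secondary. The strategy is to isolate each $\mathfrak{p}_i$ by passing to a well-chosen quotient of $M$ whose annihilator can be computed from either representation.

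The key preliminary is that any nonzero quotient of a $\mathfrak{p}$-secondary module $S$ is again $\mathfrak{p}$-secondary: the dichotomy ``$rS=S$ or $r^mS=0$'' descends to any quotient, and conversely $r^k(S/N)=0$ with $S/N\neq 0$ forces $r^k S \subsetneq S$, hence $r\in\mathfrak{p}$ by the secondary property of $S$. Setting $N_i \defeq \sum_{j\neq i} S_j$, condition (ii) of minimality gives $M/N_i \neq 0$, and since $M/N_i$ is a quotient of $S_i$ it is $\mathfrak{p}_i$-secondary.

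Now compute $\sqrt{\Ann_R(M/N_i)}$ a second way, via the representation $M=\sum_j T_j$. Denoting the image of $T_j$ in $M/N_i$ by $\bar T_j$, each $\bar T_j$ is either zero or $\mathfrak{q}_j$-secondary, and so
\begin{equation*}
    \mathfrak{p}_i \;=\; \sqrt{\Ann_R(M/N_i)} \;=\; \sqrt{\textstyle\bigcap_j \Ann_R(\bar T_j)} \;=\; \bigcap_{j:\,\bar T_j \neq 0} \mathfrak{q}_j.
\end{equation*}
Since $\mathfrak{p}_i$ is prime and contains $\prod_{j:\,\bar T_j\neq 0}\mathfrak{q}_j$, it must contain some $\mathfrak{q}_j$; combined with the reverse containment $\mathfrak{p}_i\subseteq\mathfrak{q}_j$ coming from the intersection, this gives $\mathfrak{p}_i=\mathfrak{q}_j$. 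By symmetry the two sets of primes coincide, and since the primes in a minimal representation are distinct by condition (i), we also obtain $t=s$.

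No substantive obstacle is anticipated: the content lies in identifying the correct quotient $M/N_i$ and in the quotient-of-secondary lemma; the identification $\mathfrak{p}_i=\mathfrak{q}_j$ is then a formal consequence of the primeness of $\mathfrak{p}_i$ together with the radical-of-intersection equality above.
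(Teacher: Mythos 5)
Your proof is correct and is essentially the standard argument for this uniqueness theorem (as in Macdonald's original paper or in Matsumura, Theorem 6.9); the paper itself cites Macdonald without giving a proof. The key steps — that a nonzero quotient of a $\mathfrak{p}$-secondary module is $\mathfrak{p}$-secondary, that minimality makes $M/N_i$ a nonzero $\mathfrak{p}_i$-secondary quotient of $S_i$, and the annihilator computation $\Ann_R(\sum_j \bar T_j)=\bigcap_j\Ann_R(\bar T_j)$ combined with primeness of $\mathfrak{p}_i$ — are all sound and complete.
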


\begin{definition}
	Let $R$ be a ring, $M$ a representable $R$-module, and $S_i$ the $\mathfrak{p}_i$-secondary summands in a minimal secondary representation of $M$ for some $t\geq1$. Then we set
	\begin{equation*}
		\Att_R(M)=\{\mathfrak{p}_1,\ldots,\mathfrak{p}_t\}
	\end{equation*}
	and these are said to be the \emph{attached primes} of $M$.

	If an attached prime is minimal (with respect to inclusion) in this set, we say that it is \emph{isolated}, otherwise we say that it is \emph{embedded}.
\end{definition}

The theory of secondary representations is in some sense dual to that of primary decompositions. For example, in the same way that every ideal of a Noetherian ring has a primary decomposition, we have the following:

\begin{theorem}\thmCite[(5.2)]{macdonaldSecondaryRepresentationModules1973}\label{ArtModRep}
	Let $R$ be a ring and $M$ an Artinian $R$-module. Then $M$ is representable.
\end{theorem}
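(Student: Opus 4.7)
The plan is to establish two intermediate facts: (i) every Artinian $R$-module is a finite sum of \emph{sum-irreducible} submodules (meaning submodules $N$ with the property that any decomposition $N=N_1+N_2$ as a sum of submodules forces $N=N_1$ or $N=N_2$), and (ii) every sum-irreducible Artinian module is secondary. Combining (i) and (ii) immediately produces a secondary representation of $M$, as required.

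For (i), I would argue by Artinian induction. Let $\mathcal{F}$ denote the set of submodules of $M$ that are \emph{not} expressible as a finite sum of sum-irreducible submodules, and suppose for contradiction that $\mathcal{F}\neq\varnothing$. The DCC then furnishes a minimal element $N\in\mathcal{F}$. Since $N$ is not itself sum-irreducible, we may write $N=N_1+N_2$ with $N_1,N_2\subsetneq N$; minimality of $N$ ensures $N_1,N_2\notin\mathcal{F}$, so each is a finite sum of sum-irreducibles, whence so is $N$---contradicting $N\in\mathcal{F}$. Hence $\mathcal{F}=\varnothing$, and in particular $M$ itself admits the desired decomposition.

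For (ii), fix a sum-irreducible Artinian $R$-module $N$ and an element $r\in R$. The descending chain $N\supseteq rN\supseteq r^2N\supseteq\cdots$ stabilises at some $L\defeq r^mN$, and the stabilisation delivers the equality $rL=L$. Iterating gives $L=r^mL$, so for any $x\in N$ we have $r^mx\in L=r^mL$, and hence $r^mx=r^my$ for some $y\in L$. Writing $x=(x-y)+y$ then yields the decomposition
\begin{equation*}
	N=K+L,\quad\text{where }K\defeq\{z\in N:r^mz=0\}.
\end{equation*}
Sum-irreducibility now forces either $N=K$ (so $r^mN=0$) or $N=L$ (so $rN=rL=L=N$), which is precisely the condition for $N$ to be secondary.

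The main obstacle is producing the correct decomposition $N=K+L$; the subtle point is that chain stabilisation supplies the equality $rL=L$ rather than merely the containment $rL\subseteq L$, and it is this equality that makes $r^m$ act surjectively on $L$ and thereby enables the splitting. Once the decomposition is in place, sum-irreducibility collapses it to a single summand and immediately yields the secondary condition.
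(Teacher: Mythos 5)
Your argument is correct and is essentially Macdonald's original proof of (5.2), which is precisely what the paper cites without reproving: a minimal-counterexample (DCC) argument yielding a finite sum-irreducible decomposition, followed by the Fitting-style stabilisation of $N\supseteq rN\supseteq r^2N\supseteq\cdots$ to show each sum-irreducible Artinian module is secondary. The only point you leave implicit is that the zero module is vacuously sum-irreducible yet fails to be secondary, but your induction in (i) already avoids producing zero summands (a decomposition $N=N_1+N_2$ with $N_1=0$ would force $N=N_2\subsetneq N$), so there is nothing to repair.
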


Furthermore, in the same way that for an $R$-module $M$ we have that $\mathfrak{p}\in\Ass_R(M)$ if and only if $R/\mathfrak{p}\hookrightarrow M$, the following is true of attached primes (for Noetherian rings):

\begin{lemma}\thmCite[(2.5)]{macdonaldSecondaryRepresentationModules1973}\label{AttSurjLemma}
	Let $R$ be a ring and $M$ a representable $R$-module. Then $\mathfrak{p}\in\Att_R(M)$ if and only if there exists a $R$-module $N$ such that $\Ann_R(N)=\mathfrak{p}$ and an $R$-epimorphism $M\twoheadrightarrow N$.
\end{lemma}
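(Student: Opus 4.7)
My plan is to prove both directions starting from a fixed minimal secondary representation $M = S_1 + \cdots + S_t$ with each $S_i$ being $\mathfrak{p}_i$-secondary, and to repeatedly exploit the easy fact that any nonzero homomorphic image of a $\mathfrak{q}$-secondary module is again $\mathfrak{q}$-secondary.

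For the forward direction, I would take $\mathfrak{p} = \mathfrak{p}_i$. By minimality, $S_i \nsubseteq \sum_{j\neq i} S_j$, so $N_0 \defeq M / \sum_{j\neq i} S_j$ is nonzero; and since $M = S_i + \sum_{j\neq i} S_j$, the projection presents $N_0$ as a quotient of $S_i$, hence as a $\mathfrak{p}$-secondary module. The main obstacle appears here: being $\mathfrak{p}$-secondary only yields $\sqrt{\Ann_R(N_0)} = \mathfrak{p}$, not equality. I would resolve this by passing to the further quotient $N \defeq N_0 / \mathfrak{p}N_0$. Since $R$ is Noetherian, $\mathfrak{p}$ is finitely generated, so $\mathfrak{p}^k N_0 = 0$ for some $k$; iterating $N_0 = \mathfrak{p}N_0$ would then force $N_0 = 0$, and so $N \neq 0$. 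Clearly $\mathfrak{p} \subseteq \Ann_R(N)$, and for any $r \notin \mathfrak{p}$ the secondariness of $N_0$ gives $rN_0 = N_0$, whence $rN = N \neq 0$, so $r \notin \Ann_R(N)$. Thus $\Ann_R(N) = \mathfrak{p}$ exactly, and the composite $M \twoheadrightarrow N_0 \twoheadrightarrow N$ is the required epimorphism.

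For the converse, suppose $\varphi : M \twoheadrightarrow N$ with $\Ann_R(N) = \mathfrak{p}$. Then $N = \sum_i \varphi(S_i)$, and each nonzero $\varphi(S_i)$ is $\mathfrak{p}_i$-secondary. From $\Ann_R(N) = \bigcap_i \Ann_R(\varphi(S_i))$, taking radicals and using that radicals commute with finite intersections gives
\begin{equation*}
	\mathfrak{p} = \sqrt{\mathfrak{p}} = \mySum_{\varphi(S_i)\neq 0}\sqrt{\Ann_R(\varphi(S_i))} \text{ (as an intersection)} = \myCap_{\varphi(S_i)\neq 0}\mathfrak{p}_i.
\end{equation*}
(I will of course write this as an intersection, not a sum, in the actual proof.) Since $\mathfrak{p}$ is prime and contains this finite intersection, prime avoidance forces $\mathfrak{p}_i \subseteq \mathfrak{p}$ for some $i$; the reverse inclusion $\mathfrak{p} \subseteq \mathfrak{p}_i$ is immediate from the displayed equation. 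Hence $\mathfrak{p} = \mathfrak{p}_i \in \Att_R(M)$.

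The real subtlety is the gap between $\sqrt{\Ann_R(N_0)} = \mathfrak{p}$ and the required $\Ann_R(N_0) = \mathfrak{p}$ in the forward direction; this is precisely where the Noetherian hypothesis enters, via the fact that $\mathfrak{p}$-secondary modules are annihilated by some power of $\mathfrak{p}$ whenever $\mathfrak{p}$ is finitely generated.
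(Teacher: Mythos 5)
The paper itself offers no proof of this lemma --- it is stated with a citation to Macdonald's (2.5) and nothing more --- so there is nothing in the source to compare your argument against line by line. Taken on its own terms, your proof is correct and complete, and it isolates exactly the right subtlety in the forward direction: passing from $\sqrt{\Ann_R(N_0)}=\mathfrak{p}$ to $\Ann_R(N)=\mathfrak{p}$ by the further quotient $N=N_0/\mathfrak{p}N_0$. Your use of the Noetherian hypothesis there is appropriate; although the lemma's wording says only ``Let $R$ be a ring,'' the sentence introducing it in the paper explicitly restricts to Noetherian rings, so $\mathfrak{p}$ is finitely generated and the step $\mathfrak{p}^kN_0=0$, hence $N_0\neq\mathfrak{p}N_0$, goes through.

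Two small points worth tidying when you write this out. First, in the converse, what you invoke is not prime avoidance but the elementary fact that a prime ideal containing a finite intersection (equivalently, a product) of ideals must contain one of them; prime avoidance is the dual statement about an ideal contained in a finite union of primes. Second, it is worth saying explicitly that $N\neq 0$ in the converse (immediate from $\Ann_R(N)=\mathfrak{p}\neq R$), since otherwise the index set $\{i:\varphi(S_i)\neq 0\}$ could a priori be empty and the displayed intersection would be vacuous. Neither of these is a gap --- the argument is sound --- but making them explicit would make the write-up cleaner.
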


\label{AssAttDual}This duality can be made explicit, at least for Noetherian local rings, via the Matlis Duality functor, which was introduced by Matlis in \cite{matlisInjectiveModulesNoetherian1958} (this is variously denoted $-^\vee$ or $D(-)$, we adopt $-^\vee$ here).

To be precise, if $R$ is a Noetherian complete local ring and $M$ a non-zero finitely generated $R$-module, then
\begin{equation*}\label{LocalAssAttDual}
	\Ass_R(M)=\Att_R(M^\vee)
\end{equation*}
See \cite[Corollary 10.2.20]{brodmannLocalCohomologyAlgebraic2013} for the details.

In our case, $R$ is not a local ring. However it is Noetherian, Gorenstein, and non-negatively $\mathbb{Z}$-graded, with a unique homogeneous maximal ideal. These properties allow the above result, as well as several others usually stated for complete local rings, to hold for graded $R$-modules (such as quotients by binomial edge ideals). We will provide in \hyperref[GradedAppendix]{the appendix} proofs (or references) for the results of this kind which we will use in this paper.

\section{A Short Exact Sequence for Block Graphs}\label{BGESSection}

We begin with some definitions:
\begin{definition}
	For a graph $G$ and vertex $v\in V(G)$, we denote by $N_G(v)$ the \emph{neighbourhood} of $v$ in $G$, that is, the vertices $w$ in $G$ for which $\{v,w\}\in E(G)$ (recall that our edges are undirected). Furthermore, we denote by $N_G[v]$ the \emph{closed neighbourhood} of $v$ in $G$, which is given by $N_G(v)\cup\{v\}$.
\end{definition}

\begin{definition}
	We say that a vertex $v$ of a graph $G$ is a \emph{cut vertex} of $G$ if the induced subgraph of $G$ obtained by removing $v$ has a greater number of connected components than $G$.
\end{definition}

\begin{definition}
	A graph is said to be \emph{biconnected} if it is connected, and remains connected if any one of its vertices is removed. Maximal biconnected subgraphs of a graph $G$ are called \emph{biconnected components} of $G$. We say that a graph $G$ is a \emph{block graph} if every biconnected component of $G$ is a clique.
\end{definition}

\begin{definition}\label{LeafCliqueDef}
	We say that a maximal clique of a block graph $G$ is a \emph{leaf clique} of $G$ if it contains at most one vertex which intersects with another maximal clique, or a \emph{branch clique} of $G$ otherwise.
\end{definition}

\begin{note}
	\cref{LeafCliqueDef} is not established terminology.
\end{note}

Next, we introduce some notation:
\begin{notation}
	For any cut vertex $v$ of $G$, we denote by $G_v$ the graph obtained by adding the edges of the complete graph with vertex set $N_G(v)$ to $G$ (that is, we complete the neighbourhood of $v$ in $G$).
\end{notation}

Suppose that $G$ has a cut vertex. By \cite[Lemma 4.8]{ohtaniGraphsIdealsGenerated2011}, we have
\begin{equation*}
	\mathcal{J}_G=\mathcal{J}_{G_v}\cap(\mathcal{J}_G+(x_v,y_v))
\end{equation*}
Note that
\begin{equation*}
	\mathcal{J}_G+(x_v,y_v)=\mathcal{J}_{G\setminus\{v\}}+(x_v,y_v)
\end{equation*}
and
\begin{equation*}
	\mathcal{J}_{G_v}+(\mathcal{J}_G+(x_v,y_v))=\mathcal{J}_{G_v\setminus\{v\}}+(x_v,y_v)
\end{equation*}
For brevity, we set
\begin{align*}
	G'&=G_v\\
	G''&=G\setminus\{v\}\\
	H&=G_{v}\setminus\{v\}
\end{align*}
and
\begin{align*}
	Q_1&=\mathcal{J}_{G'}\\
	Q_2&=\mathcal{J}_{G''}+(x_v,y_v)\\
	Q_3&=\mathcal{J}_H+(x_v,y_v)
\end{align*}
Then we obtain the short exact sequence
\begin{equation*}\label{BGSES}
	\begin{tikzcd}
		0 \arrow[r] & R/\mathcal{J}_G \arrow[r,"",hook] & R/Q_1\oplus R/Q_2 \arrow[r,"",two heads] & R/Q_3 \arrow[r] & 0
	\end{tikzcd}
\end{equation*}

\begin{example}
	If
	\begin{equation*}
		G=\quad\begin{tikzpicture}[x=0.7cm,y=0.7cm,every node/.style={circle,draw=black,fill=black,inner sep=0pt,minimum size=5pt},label distance=0.15cm,line width=0.25mm,baseline={([yshift=-0.5ex]current bounding box.center)}]
        	\node(1) at (0,0) {};
			\node(2) at (0,1) {};
			\node(3) at (1,1) {};
			\node(4) at (1,0) {};
			\node[label={[label distance=0.05cm]180:$a$}](5) at ([shift=(135:1)]2) {};
			\node(6) at ([shift=(75:1)]3) {};
			\node[label={[label distance=0.05cm]30:$v$}](7) at ([shift=(15:1)]3) {};
			\node(8) at ([shift=(-15:1)]7) {};
			\node[label={[label distance=0.02cm]0:$b$}](9) at ([shift=(15:1)]8) {};
			\node(10) at ([shift=(-45:1)]8) {};

	    	\foreach \from/\to in {1/2,1/3,1/4,2/3,2/4,2/5,3/4,3/6,3/7,6/7,7/8,8/9,8/10,9/10}
        		\draw[-] (\from) -- (\to);
    	\end{tikzpicture}
	\end{equation*}
	then the cliques containing the vertices $a$ and $b$ are leaf cliques, will all other cliques being branch cliques, and
	\begin{equation*}
		G'=\quad\begin{tikzpicture}[x=0.7cm,y=0.7cm,every node/.style={circle,draw=black,fill=black,inner sep=0pt,minimum size=5pt},label distance=0.15cm,line width=0.25mm,baseline={([yshift=-0.5ex]current bounding box.center)}]
        	\node(1) at (0,0) {};
			\node(2) at (0,1) {};
			\node(3) at (1,1) {};
			\node(4) at (1,0) {};
			\node(5) at ([shift=(135:1)]2) {};
			\node(6) at ([shift=(75:1)]3) {};
			\node(7) at ([shift=(15:1)]3) {};
			\node(8) at ([shift=(-15:1)]7) {};
			\node(9) at ([shift=(15:1)]8) {};
			\node(10) at ([shift=(-45:1)]8) {};

	    	\foreach \from/\to in {1/2,1/3,1/4,2/3,2/4,2/5,3/4,3/6,3/7,3/8,6/7,6/8,7/8,8/9,8/10,9/10}
        		\draw[-] (\from) -- (\to);
		\end{tikzpicture}\quad\quad G''=\quad\begin{tikzpicture}[x=0.7cm,y=0.7cm,every node/.style={circle,draw=black,fill=black,inner sep=0pt,minimum size=5pt},label distance=0.15cm,line width=0.25mm,baseline={([yshift=-0.5ex]current bounding box.center)}]
        	\node(1) at (0,0) {};
			\node(2) at (0,1) {};
			\node(3) at (1,1) {};
			\node(4) at (1,0) {};
			\node(5) at ([shift=(135:1)]2) {};
			\node(6) at ([shift=(75:1)]3) {};
			\node(8) at ([shift=(-15:1)]7) {};
			\node(9) at ([shift=(15:1)]8) {};
			\node(10) at ([shift=(-45:1)]8) {};

	    	\foreach \from/\to in {1/2,1/3,1/4,2/3,2/4,2/5,3/4,3/6,8/9,8/10,9/10}
        		\draw[-] (\from) -- (\to);
    	\end{tikzpicture}\quad\quad H=\quad\begin{tikzpicture}[x=0.7cm,y=0.7cm,every node/.style={circle,draw=black,fill=black,inner sep=0pt,minimum size=5pt},label distance=0.15cm,line width=0.25mm,baseline={([yshift=-0.5ex]current bounding box.center)}]
        	\node(1) at (0,0) {};
			\node(2) at (0,1) {};
			\node(3) at (1,1) {};
			\node(4) at (1,0) {};
			\node(5) at ([shift=(135:1)]2) {};
			\node(6) at ([shift=(75:1)]3) {};
			\node(8) at ([shift=(-15:1)]7) {};
			\node(9) at ([shift=(15:1)]8) {};
			\node(10) at ([shift=(-45:1)]8) {};

	    	\foreach \from/\to in {1/2,1/3,1/4,2/3,2/4,2/5,3/4,3/6,3/8,6/8,8/9,8/10,9/10}
        		\draw[-] (\from) -- (\to);
		\end{tikzpicture}
	\end{equation*}
\end{example}

\section{Some Properties of Cut Vertices of Block Graphs}\label{CVPropsSection}

\begin{tcolorbox}
	Throughout this \lcnamecref{CVPropsSection}, we suppose that $G$ is a block graph on $n$ vertices which is not a disjoint union of cliques, and so it has at least one cut vertex.

	$v$ will always denote a cut vertex of $G$, and we adopt the notation of \cref{BGESSection}.
\end{tcolorbox}

We will next compute $\mathcal{C}(G')$, as well as $\mathcal{C}(G'')$ and $\mathcal{C}(H)$ for \hyperref[GoodCVExistence]{certain $v$}, relative to $\mathcal{C}(G)$.

We begin with a preliminary \lcnamecref{PathEquiv}:
\begin{lemma}\label{PathEquiv}
	For any $S\subseteq V(G)$ with $v\notin S$, and any vertices $a$ and $b$ of $G\setminus S$ other than $v$, the following are equivalent:
	\begin{enumerate}
		\item\label{PathEquiv1} $a$ and $b$ are connected by a path in $G\setminus S$.
		\item\label{PathEquiv2} $a$ and $b$ are connected by a path in $G'\setminus S$.
		\item\label{PathEquiv3} $a$ and $b$ are connected by a path in $H\setminus S$.
	\end{enumerate}
\end{lemma}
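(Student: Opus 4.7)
The plan is to prove the equivalences via the four implications $(1) \Rightarrow (2)$, $(3) \Rightarrow (2)$, $(2) \Rightarrow (1)$, and $(2) \Rightarrow (3)$. The first two are immediate: $G$ is a spanning subgraph of $G'$, and $H$ is the subgraph of $G'$ obtained by deleting $v$; in either case any path witnessing (1) or (3) is automatically a path in $G' \setminus S$.

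For the substantive directions, the key observation is that the edges of $G'$ not present in $G$ are precisely those of the form $\{w_1, w_2\}$ with distinct $w_1, w_2 \in N_G(v)$, and each such edge corresponds combinatorially to the length-two walk $w_1 - v - w_2$ in $G$. For $(2) \Rightarrow (1)$, I would take a path $P$ from $a$ to $b$ in $G' \setminus S$ and replace every edge of $P$ that lies in $E(G') \setminus E(G)$ by this length-two walk through $v$. Because $v \notin S$, the resulting walk lies in $G \setminus S$, and from it we extract a path from $a$ to $b$.

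For $(2) \Rightarrow (3)$, again start with a path $P$ from $a$ to $b$ in $G' \setminus S$. If $v \notin V(P)$, then $P$ already lies in $H \setminus S$ (every edge of $P$ is an edge of $G'$ not incident to $v$, hence an edge of $H$). Otherwise, the hypothesis $a, b \neq v$ forces $v$ to be an internal vertex of $P$, flanked by two distinct neighbours $w_1, w_2 \in N_{G'}(v) = N_G(v)$. The completion of $N_G(v)$ built into $G_v$ then makes $\{w_1, w_2\}$ an edge of $H$, so replacing the subpath $w_1 - v - w_2$ by the direct edge $w_1 - w_2$ yields a path in $H \setminus S$ from $a$ to $b$.

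The argument is essentially bookkeeping, and the only delicate points are the systematic use of $v \notin S$ (so that the detour through $v$ stays inside $G \setminus S$) and of $a, b \neq v$ (so that $v$ may be excised from an interior position of a path in $G' \setminus S$ when transferring to $H$). I do not anticipate any substantive obstacle.
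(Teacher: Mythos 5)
Your proof is correct and essentially the same as the paper's. The paper proves the cycle $(1)\Rightarrow(2)\Rightarrow(3)\Rightarrow(1)$, while you make $(2)$ a hub with four implications, but the two substantive arguments (replacing a detour $w_1$--$v$--$w_2$ by the completed-neighbourhood edge $\{w_1,w_2\}$, and conversely replacing ``bad'' edges by the length-two walk through $v$) are the same, and the trivial directions in both versions are just subgraph inclusions.
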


\begin{proof}\mbox{}
	\newlength{\mylength}\maxlength{\ref{PathEquivProof1},\ref{PathEquivProof2},\ref{PathEquivProof3}}{\mylength}
	\begin{enumerate}[labelindent=0pt,labelwidth=\mylength,leftmargin=!]
		\myitem{{\hyperref[PathEquiv1]{(1)} $\Rightarrow$ \hyperref[PathEquiv2]{(2)}:}}\label{PathEquivProof1} This follows immediately from the fact that $E(G')\supseteq E(G)$.
		\myitem{{\hyperref[PathEquiv2]{(2)} $\Rightarrow$ \hyperref[PathEquiv3]{(3)}:}}\label{PathEquivProof2} Let $P$ be a path connecting $a$ and $b$ in $G'\setminus S$. Since $H=G'\setminus\{v\}$, the only issue that may arise is if $P$ passes through $v$. Then suppose that this is the case, so $P$ contains edges $\{w_1,v\}$ and $\{v,w_2\}$ for some $w_1,w_2\in N_G(v)\setminus S$. Since $N_G(v)$ has been completed in $H$, we may replace $\{w_1,v\}$ and $\{v,w_2\}$ in $P$ with $\{w_1,w_2\}$. This new path then connects $a$ and $b$ in $H\setminus S$.
	\myitem{{\hyperref[PathEquiv3]{(3)} $\Rightarrow$ \hyperref[PathEquiv1]{(1)}:}}\label{PathEquivProof3} Now let $P$ be a path connecting $a$ and $b$ in $H\setminus S$. Since $H$ is obtained by completing $N_G(v)$ in $G$ and then removing $v$, the only issue that may arise here is if $P$ includes any edges of the form $\{w_1,w_2\}$ for some $w_1,w_2\in N_G(v)\setminus S$ with $\{w_1,w_2\}\notin E(G)$, so suppose that this is the case. Since $v\notin S$, we know that the edges $\{w_1,v\}$ and $\{v,w_2\}$ belong to $G\setminus S$, and so we may use these edges to replace $\{w_1,w_2\}$ in $P$. If $P$ contains several such ``bad'' edges, the graph obtained by making these replacements in $P$, say $Q$, will no longer be a path. However, it will still be connected, and so we can find a subgraph of $Q$ which is a path connecting $a$ and $b$ in $G\setminus S$.\qedhere
	\end{enumerate}
\end{proof}

We can now compute $\mathcal{C}(G')$:

\begin{proposition}\label{GPrimeSepSets}
	We have
	\begin{equation*}
		\mathcal{C}(G')=\{S\in\mathcal{C}(G):v\notin S\}
	\end{equation*}
\end{proposition}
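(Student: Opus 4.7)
The plan is to establish the equality by splitting on whether $v \in S$. More precisely, I will show: (a) no $S$ containing $v$ lies in $\mathcal{C}(G')$, and (b) for $S$ with $v \notin S$, membership in $\mathcal{C}(G')$ is equivalent to membership in $\mathcal{C}(G)$. Together these give both inclusions of the claimed equality. Both parts reduce to simple observations about how completing $N_G(v)$ affects connectivity, with \cref{PathEquiv} doing the real work in (b).

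For (a), the key point is that $G_v$ adds no edges incident to $v$, so $N_{G'}(v) = N_G(v)$, while the vertices of $N_G(v)$ now form a clique in $G'$. Suppose $v \in S$ and consider $G' \setminus (S \setminus \{v\})$, that is, adding $v$ back to $G' \setminus S$. The vertices $N_G(v) \setminus S$, if any, already lie in a single connected component of $G' \setminus S$ (they form a clique there), so reintroducing $v$ merely attaches $v$ to this one existing component; and if $N_G(v) \setminus S$ is empty, $v$ returns as an isolated vertex, forming a new component. In either case $c_{G'}(S \setminus \{v\}) \geq c_{G'}(S)$, which violates the defining condition for $\mathcal{C}(G')$. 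Thus $v \notin S$ whenever $S \in \mathcal{C}(G')$.

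For (b), suppose $v \notin S$. I will show $c_G(S) = c_{G'}(S)$, and similarly $c_G(S \setminus \{w\}) = c_{G'}(S \setminus \{w\})$ for any $w \in S$ (note $v \notin S \setminus \{w\}$ still holds). Both $G \setminus S$ and $G' \setminus S$ have the same vertex set $V(G) \setminus S$, and since $v \notin S$, both contain $v$ with the same neighbourhood $N_G(v) \setminus S$. By the equivalence of parts (1) and (2) of \cref{PathEquiv}, any two vertices $a, b \neq v$ are path-connected in $G \setminus S$ iff they are in $G' \setminus S$, so the two graphs agree on the connectivity of their non-$v$ vertices; and $v$ itself lies in the same component as any of its neighbours in both graphs. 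Hence the component counts agree, and the same argument applies to $S \setminus \{w\}$. This means the inequalities $c(S \setminus \{w\}) < c(S)$ for all $w \in S$ hold for $G$ if and only if they hold for $G'$, giving $S \in \mathcal{C}(G) \iff S \in \mathcal{C}(G')$.

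The main (and only real) obstacle is part (a): one must identify why the edges added in $G_v$ are exactly what force $v$ out of every set in $\mathcal{C}(G')$ — specifically, completing $N_G(v)$ ensures that $v$ can never reconnect two distinct components of $G' \setminus S$. Once this is observed, part (b) is an almost immediate consequence of \cref{PathEquiv} applied to $S$ and to each $S \setminus \{w\}$.
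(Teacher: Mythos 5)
Your proof is correct and rests on the same two ingredients as the paper's: the observation that completing $N_G(v)$ into a clique forces $v$ out of every set in $\mathcal{C}(G')$, and an application of \cref{PathEquiv} to transfer the separating condition between $G$ and $G'$ once $v\notin S$ is known. The one organisational difference is that you prove the slightly stronger statement $c_G(T)=c_{G'}(T)$ for every $T$ with $v\notin T$ (using \cref{PathEquiv} together with $N_{G'}(v)=N_G(v)$), from which the equivalence $S\in\mathcal{C}(G)\Leftrightarrow S\in\mathcal{C}(G')$ falls out at once, whereas the paper verifies the reconnection property vertex-by-vertex for each $w\in S$. Your framing is a touch cleaner; in particular, by handling $v$'s component explicitly, it transparently covers the case in which one of the two vertices reconnected by some $w\in S$ is $v$ itself (a case to which \cref{PathEquiv} does not directly apply, and which the paper's pointwise argument glosses over).
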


\begin{proof}
	We will first show that
	\begin{equation*}
		\mathcal{C}(G')\supseteq\{S\in\mathcal{C}(G):v\notin S\}
	\end{equation*}
	Take any $S\in\mathcal{C}(G)$ such that $v\notin S$, and any $w\in S$. To show that $S\in\mathcal{C}(G')$, we must show that adding $w$ back to $G'\setminus S$ reconnects (at least) two vertices lying in separate connected components of $G'\setminus S$. We know that adding $w$ back to $G\setminus S$ reconnects at least two vertices $a,b\in N_G(w)\setminus S$ lying in separate connected components of $G\setminus S$ since $S\in\mathcal{C}(G)$. By \cref{PathEquiv}, $a$ and $b$ will lie in separate connected components of $G'\setminus S$ also, and will be reconnected in $G'\setminus S$ by adding $w$ back to $G'\setminus S$. Then $S\in\mathcal{C}(G')$, and the first inclusion follows.

	We will now show that
	\begin{equation*}
		\mathcal{C}(G')\subseteq\{S\in\mathcal{C}(G):v\notin S\}
	\end{equation*}
	Take any $S\in\mathcal{C}(G')$, and any $w\in S$. Note that any vertices in $N_G(v)\setminus S$ will be connected in $G'\setminus S$ (since we obtained $G'$ from $G$ by completing $N_G(v)$), and so we cannot have $v\in S$, since adding it back to $G'\setminus S$ cannot then reconnect any separate connected components of $G'\setminus S$, which would contradict that $S\in\mathcal{C}(G')$. 

	To show that $S\in\mathcal{C}(G)$, we must show that adding $w$ back to $G\setminus S$ reconnects (at least) two vertices lying in separate connected components of $G\setminus S$. We know that adding $w$ back to $G'\setminus S$ reconnects at least two vertices $a,b\in N_{G'}(w)\setminus S$ lying in separate connected components of $G'\setminus S$ since $S\in\mathcal{C}(G')$. By \cref{PathEquiv}, $a$ and $b$ will lie in separate connected components of $G\setminus S$ also, and will be reconnected in $G\setminus S$ by adding $w$ back to $G\setminus S$. Then $S\in\mathcal{C}(G)$, and the result follows.
\end{proof}

We will make use of the following two lemmas:

\begin{lemma}\label{ExtremalLeafCliqueExist}
	Suppose that $G$ has at least two cut vertices. Then there exists a leaf clique of $G$ which intersects with exactly one branch clique of $G$.
\end{lemma}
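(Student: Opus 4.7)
The plan is to pass to the \emph{block-cut tree} $T$ of $G$, whose nodes are the maximal cliques (blocks) of $G$ together with the cut vertices of $G$, with a cut vertex $c$ adjacent in $T$ to a block $B$ precisely when $c\in B$. Since every cut vertex of $G$ lies in at least two blocks, every cut vertex has $T$-degree $\ge 2$, so the leaves of $T$ are exactly the leaf cliques of $G$. Moreover, because two distinct blocks of a block graph meet in at most one vertex, each leaf clique contains exactly one cut vertex (given that $G$ is not itself a single clique).

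I would then choose a longest path $P$ in $T$ between two leaves. Since $G$ has at least two cut vertices, $T$ is neither a single edge nor a star centred at a single block, so $P$ takes the form $L_1, c_1, B_1, c_2, \ldots, c_k, L_2$ with $k\ge 2$. In particular $B_1$ contains the two distinct cut vertices $c_1, c_2$, hence is a branch clique. Because $L_1$ is a leaf clique whose only cut vertex is $c_1$, any block meeting $L_1$ other than $L_1$ itself must contain $c_1$, so it suffices to show that $c_1$ lies in no branch clique other than $B_1$.

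The heart of the argument is an extremal contradiction. Suppose some branch clique $B'\ne B_1$ contains $c_1$. Then $B'$ has $T$-degree $\ge 2$, so deleting the edge $c_1B'$ from $T$ leaves $B'$ in a subtree $T_{B'}$ in which $B'$ still has some neighbour $c'\ne c_1$. Every cut vertex in $T_{B'}$ keeps its original $T$-degree $\ge 2$, so $T_{B'}$ must contain at least one leaf that is also a leaf of $T$, i.e.\ a leaf clique $L'$. Using the bipartite structure of $T$, one has $\mathrm{dist}_T(L', B')\ge 2$ and so $\mathrm{dist}_T(L', c_1)\ge 3$, whereas $\mathrm{dist}_T(L_1, c_1)=1$. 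Splicing the $L'$-to-$c_1$ path onto the $c_1$-to-$L_2$ tail of $P$ would then yield a path in $T$ strictly longer than $P$, contradicting the maximality of $P$. Hence no such $B'$ exists, and $L_1$ meets exactly one branch clique, namely $B_1$. The only real obstacle is the tree bookkeeping — once $T$ is set up, the extremal step is essentially forced.
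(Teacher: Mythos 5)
Your proof is correct, and it takes a genuinely different route from the paper's. The paper argues by induction on the number of cut vertices: choose a leaf clique $L$ with cut vertex $w$, delete $L\setminus\{w\}$ to form a smaller block graph $A$, invoke the inductive hypothesis on $A$, and then a short case analysis shows that either the leaf clique produced for $A$, or $L$ itself, works in $G$. Your argument instead passes to the block-cut tree $T$ and runs an extremal contradiction off a diametral path: an endpoint $L_1$ of such a path $P$ is a leaf clique whose unique cut vertex $c_1$ can lie in at most one branch block, because a second branch block $B'$ containing $c_1$ would anchor a subtree of $T$ (across the removed edge $c_1B'$) that supplies a $T$-leaf $L'$ with $\operatorname{dist}_T(L',c_1)\geq 3 > 1 = \operatorname{dist}_T(L_1,c_1)$, and splicing the $L'$-to-$c_1$ path onto the $c_1$-to-$L_2$ tail of $P$ would produce a strictly longer leaf-to-leaf path. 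The trade-off is that the paper's peel-a-leaf-clique induction structurally mirrors the proof of the proposition that uses this lemma, whereas your proof is self-contained, identifies an explicit witness, and avoids having to verify that the paper's inductive parameter (the number of cut vertices of $A$) actually drops. Two small points to make explicit in a polished write-up: the block-cut structure is a tree rather than a forest only when $G$ is connected, an assumption both proofs use implicitly; and the spliced path is simple because its two segments lie in the two components of $T$ with the edge $c_1B'$ removed and meet only at $c_1$.
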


\begin{proof}
	The proof will be by induction on the number of cut vertices of $G$. If $G$ has a exactly two cut vertices then the result is obvious, so suppose that $G$ has more than two cut vertices.

	Since $G$ has a cut vertex, we may choose a leaf clique $L$ of $G$ with a cut vertex $w\in V(L)$, and set $A=G\setminus(C\setminus\{w\})$. That is, we obtain $A$ by removing $L$ from $G$, but keeping $w$. $A$ has fewer cut vertices than $G$, and so we can find a leaf clique $C$ of $A$ which intersects with exactly one branch clique of $A$ by induction.

	If $C$ remains a leaf clique in $G$, then it must intersect with exactly one branch clique of $G$, since we only removed a leaf clique from $G$ to obtain $A$. The only way for $C$ to become a branch clique in $G$ is if $L$ is a leaf clique intersecting with it. In this case, since $C$ is a leaf clique in $A$, the only branch clique that $L$ can intersect with in $G$ is $C$. Then, in either case, we can find a leaf clique of $G$ which intersects with exactly one branch clique of $G$, so we are done.
\end{proof}

\begin{lemma}\label{SepSetContainment}
	Suppose that $S\in\mathcal{C}(H)$, and that $N_G(v)\subseteq S$. Then $S\in\mathcal{C}(G'')$.
\end{lemma}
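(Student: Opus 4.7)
The plan is to observe that under the hypothesis $N_G(v)\subseteq S$, the two graphs $H\setminus T$ and $G''\setminus T$ actually coincide both for $T=S$ and for $T=S\setminus\{w\}$ with $w\in S$, so the component counts agree and the defining property of $\mathcal{C}(H)$ transfers directly to $\mathcal{C}(G'')$.

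First, I would note that $V(H)=V(G'')=V(G)\setminus\{v\}$, so $S\subseteq V(G'')$ automatically. The edge sets differ only by the ``completion'' edges: concretely,
\begin{equation*}
	E(H)\setminus E(G'')\subseteq\{\{a,b\}:a,b\in N_G(v),\,a\neq b\},
\end{equation*}
since $H$ is obtained from $G''$ by completing $N_G(v)$ (and $v$ itself has already been deleted). Every such extra edge has \emph{both} endpoints inside $N_G(v)\subseteq S$.

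Next, I would handle the two relevant vertex deletions. When we delete the set $S$ from $H$, each extra edge loses both of its endpoints (they lie in $N_G(v)\subseteq S$), so $H\setminus S=G''\setminus S$, giving $c_H(S)=c_{G''}(S)$. When we delete $S\setminus\{w\}$ for some $w\in S$, at most one endpoint of any extra edge can survive --- namely $w$ itself, and only when $w\in N_G(v)$ --- because the other endpoint lies in $N_G(v)\setminus\{w\}\subseteq S\setminus\{w\}$ and is removed. Since an edge needs both endpoints to survive, no extra edge persists, so $H\setminus(S\setminus\{w\})=G''\setminus(S\setminus\{w\})$ and thus $c_H(S\setminus\{w\})=c_{G''}(S\setminus\{w\})$.

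Finally, I would observe that $S\neq\varnothing$ (since $v$ is a cut vertex, $N_G(v)\neq\varnothing$, forcing $N_G(v)\subseteq S$ to be nonempty), so the hypothesis $S\in\mathcal{C}(H)$ gives $c_H(S\setminus\{w\})<c_H(S)$ for every $w\in S$. Combining with the two equalities just established yields $c_{G''}(S\setminus\{w\})<c_{G''}(S)$ for every $w\in S$, which is exactly the statement $S\in\mathcal{C}(G'')$. There is no real obstacle here --- the entire argument is a routine bookkeeping of which edges survive the deletion --- and the hypothesis $N_G(v)\subseteq S$ is used in exactly one place, to guarantee that the completion edges of $H$ all vanish upon removing $S$ (or $S$ minus a single vertex).
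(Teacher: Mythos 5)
Your proof is correct and takes essentially the same approach as the paper: both observe that the extra (completion) edges of $H$ have both endpoints in $N_G(v)\subseteq S$, so they vanish not only from $H\setminus S$ but also from $H\setminus(S\setminus\{w\})$ for any $w\in S$, making the separator conditions for $\mathcal{C}(H)$ and $\mathcal{C}(G'')$ coincide. You are a bit more explicit than the paper (spelling out the $H\setminus(S\setminus\{w\})=G''\setminus(S\setminus\{w\})$ equality and the $S\neq\varnothing$ observation), but the underlying argument is identical.
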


\begin{proof}
	Since $N_G(v)\subseteq S$, we have $H\setminus S=G''\setminus S$. The only edges in $H$ that are not in $G''$ are between neighbours of $v$, but these are all removed in $H\setminus S$, so adding any single vertex back to either $H\setminus S$ or $G''\setminus S$ will have the same effect.
\end{proof}

For the purposes of \cref{MainBGThmProofSection}, we will need to choose $v$ with a particular property:

\begin{proposition}\label{GoodCVExistence}
	There exists a cut vertex $v$ of $G$ such that
	\begin{equation*}\label{ClaimedEquality}
		\mathcal{C}(G'')=\{S\setminus\{v\}:\text{\normalfont $S\in\mathcal{C}(G)$ with $v\in S$}\}\tag*{$(\dag)$}
	\end{equation*}
\end{proposition}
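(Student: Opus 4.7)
My plan is to prove the two inclusions separately. The $\supseteq$ direction holds for \emph{any} cut vertex $v$: given $S \in \mathcal{C}(G)$ with $v \in S$, set $T = S \setminus \{v\}$. Since $v \notin T$, removing $T \cup \{v\}$ from $G$ is the same as removing $T$ from $G''$, so $c_G(T \cup \{v\}) = c_{G''}(T)$; the analogous identity $c_G((T \setminus \{w\}) \cup \{v\}) = c_{G''}(T \setminus \{w\})$ holds for each $w \in T$. Substituting these into the defining inequalities for $\mathcal{C}(G)$ at each such $w$ yields those for $\mathcal{C}(G'')$. So the content of the proposition is producing $v$ for which the reverse inclusion holds.

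For that direction, given $T \in \mathcal{C}(G'')$ and the candidate $S = T \cup \{v\}$, the same translation makes the condition at each $w \in T$ automatic, so only the condition at $v$ itself is new. It reduces to demanding $c_G(T) < c_{G''}(T)$, i.e.~that $v$ be a cut vertex of $G \setminus T$ (equivalently, that $N_G(v) \setminus T$ meets at least two components of $G'' \setminus T$). I will induct on the number of cut vertices. If $G$ has a unique cut vertex $v$, then $G''$ is a disjoint union of cliques; a direct computation gives $\mathcal{C}(G'') = \{\varnothing\}$, since in a disjoint union of cliques, adding back any vertex either attaches it to the existing sub-clique or leaves it isolated, so the component count cannot strictly decrease. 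As $\{v\} \in \mathcal{C}(G)$, both sides of $(\dag)$ collapse to $\{\varnothing\}$.

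For the inductive step I invoke \cref{ExtremalLeafCliqueExist} to obtain a leaf clique $L$ intersecting exactly one branch clique $B$, and consider the cut vertex $v$ of $L$. The key observation, used throughout, is that for every leaf clique $M$ of $G$ whose unique shared vertex is $v$, we have $M \setminus \{v\} \not\subseteq T$ for every $T \in \mathcal{C}(G'')$: otherwise a vertex $w \in M \setminus \{v\}$ (whose $G$-neighbours all lie in $M$) would be isolated when added back to $G'' \setminus (T \setminus \{w\})$, giving $c_{G''}(T \setminus \{w\}) = c_{G''}(T) + 1$ and contradicting $T \in \mathcal{C}(G'')$. Since the pieces $\{M \setminus \{v\} : v \in M \text{ maximal}\}$ of $N_G(v)$ sit in distinct components of $G''$ (because $v$ is a cut vertex), if $v$ lies in two or more leaf cliques as unique shared vertex, then at least two of these pieces survive outside any $T \in \mathcal{C}(G'')$, and $v$ is cut in $G \setminus T$.

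The remaining—and hardest—configuration is the one in which $v$ sits in exactly one leaf clique $L$ and one branch clique $B$, where I must rule out $T \supseteq B \setminus \{v\}$ for $T \in \mathcal{C}(G'')$. Non-cut vertices of $B$ (other than $v$) are excluded from such $T$ by the same isolation argument, forcing $B \setminus \{v\}$ to consist entirely of cut vertices of $G$; and for each cut vertex $u \in B \setminus \{v\}$ lying in $T$, the condition at $u$ demands that its non-$B$ cliques contribute surviving vertices spanning at least two components of $G'' \setminus T$ (using that, in a block graph, two cliques meeting at $u$ separate upon removing $u$). The main obstacle will be ensuring, either by refining the choice of $L$ within the block tree of $G$—choosing it so that $B$ has some $u$ which is contained in just one other clique—or by a secondary induction on the number of cliques of $G$ via the reduction $G \setminus (L \setminus \{v\})$, that some $u \in B \setminus \{v\}$ cannot be in $T$, thereby precluding the bad configuration. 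With that structural point settled, the reverse inclusion follows, completing $(\dag)$.
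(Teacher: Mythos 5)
Your opening two paragraphs are sound: the $\supseteq$ direction does hold for every cut vertex $v$ via the translation $c_{G''}(T)=c_G(T\cup\{v\})$; the reverse inclusion correctly reduces to showing that $v$ is a cut vertex of $G\setminus T$ for each $T\in\mathcal{C}(G'')$; and the base case is handled cleanly through $\mathcal{C}(G'')=\{\varnothing\}$. The isolation argument ruling out $L\setminus\{v\}\subseteq T$ and the case of $v$ lying in two or more leaf cliques are also correct. The gap is your commitment, in the inductive step, to taking $v$ to be the cut vertex of an extremal leaf clique $L$ produced by \cref{ExtremalLeafCliqueExist}. With that choice you must rule out $B\setminus\{v\}\subseteq T$, and you cannot. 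Take $G$ on $\{1,\ldots,6\}$ with edges $\{1,2\},\{2,3\},\{3,4\},\{4,5\},\{3,6\}$. The extremal leaf cliques are $\{1,2\}$ and $\{4,5\}$ (the leaf clique $\{3,6\}$ meets two branch cliques, so is excluded), with cut vertices $2$ and $4$. For $L=\{1,2\}$, $v=2$, $B=\{2,3\}$, we get $B\setminus\{v\}=\{3\}$, and $T=\{3\}\in\mathcal{C}(G\setminus\{2\})$ since removing $3$ from $G\setminus\{2\}$ raises the component count from $2$ to $3$. But $N_G(2)\setminus T=\{1\}$ meets only one component of $G\setminus\{2,3\}$, so $\{2,3\}\notin\mathcal{C}(G)$, and $(\dag)$ fails for $v=2$. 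By symmetry $v=4$ fails too, and your suggested refinement (choosing $L$ so that $B$ has a vertex lying in only one other clique) is unavailable here because $3$ lies in two cliques besides $B$.

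The correct $v$ for this example is $3$, the cut vertex of the \emph{non-extremal} leaf clique $\{3,6\}$, and how one arrives at it is exactly the content you left undeveloped. The paper does not fix $v$ in advance. When the cut vertex $w$ of the extremal leaf clique $L$ lies in only one leaf clique, the paper removes $L\setminus\{w\}$ to obtain a smaller block graph $A$, invokes the inductive hypothesis for $A$ to obtain some cut vertex $v$ of $A$ satisfying $(\dag)$ for $A$ (in general $v\neq w$, and $v$ may not be the cut vertex of any extremal leaf clique of $G$), and then proves that this $v$ satisfies $(\dag)$ for $G$. That lifting step requires a careful case analysis when $w\in S$, distinguishing whether the vertices $a_v,b_v$ reconnected by $v$ include $w$; this is where most of the work lives. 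Your sketch names the ``secondary induction via $G\setminus(L\setminus\{v\})$'' as an alternative, but frames it as establishing that some $u\in B\setminus\{v\}$ cannot be in $T$ for the \emph{original} $v$ — a statement the example above refutes. The recursion is not a patch confirming a fixed $v$; it is the mechanism for selecting a different one.
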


\begin{proof}
	The inclusion
	\begin{equation*}
		\mathcal{C}(G'')\supseteq\{S\setminus\{v\}:\text{$S\in\mathcal{C}(G)$ with $v\in S$}\}
	\end{equation*}
	clearly holds for any cut vertex $v$ of $G$, and so we will now find a cut vertex of $G$ satisfying the reverse inclusion.

	We proceed by induction on the number of cut vertices of $G$. If $G$ has a single cut vertex then the result is obvious, so suppose that $G$ has more than one cut vertex.

	By \cref{ExtremalLeafCliqueExist}, we can choose a leaf clique $L$ of $G$, with cut vertex $w\in V(L)$, which intersects with exactly one branch clique $B$ of $G$.

	If this $w$ belongs to at least two leaf cliques, then, for any $S\in\mathcal{C}(G'')$, we have $S\cup\{w\}\in\mathcal{C}(G)$, since adding $w$ back to $G\setminus S$ will reconnect these cliques, and so the desired inclusion would be satisfied by taking $v=w$.

	Otherwise, $w$ belongs to a single leaf clique, and so we are in the situation
	\begin{equation*}
		G=\quad\begin{tikzpicture}[x=1.5cm,y=1.5cm,every node/.style={circle,draw=black,fill=black,inner sep=0pt,minimum size=5pt},label distance=0.15cm,line width=0.25mm,baseline={([yshift=-0.5ex]current bounding box.center)}]

			\node[style={circle,draw=black,fill=none,inner sep=0pt,minimum size=1.25cm}](L) at (0,0) {$L$};
            \node[label={[label distance=0.375cm]90:$w$}](w) at ([shift=(0:0.625cm)]L) {};
			\node[style={circle,draw=black,fill=none,inner sep=0pt,minimum size=1.25cm}](B) at ([shift=(0:1.25cm)]L) {$B$};
			\node[style={draw=none,fill=none},label={}](a) at ([shift=(0:0.625cm)]B) {};
			\node[style={draw=none,fill=none},label={}](b) at ([shift=(45:0.625cm)]B) {};
			\node[style={draw=none,fill=none},label={}](c) at ([shift=(-45:0.625cm)]B) {};

			\tikzset{little dot/.style={circle,draw=black,fill=black,inner sep=0pt,minimum size=1pt}}

			\node[little dot](a1) at ([shift=(0:0.1)]a) {};
			\node[little dot](a2) at ([shift=(0:0.15)]a1) {};
			\node[little dot](a3) at ([shift=(0:0.15)]a2) {};

			\node[little dot](b1) at ([shift=(0:0.15)]b) {};
			\node[little dot](b2) at ([shift=(25.5:0.15)]b1) {};
			\node[little dot](b3) at ([shift=(25.5:0.15)]b2) {};

			\node[little dot](c1) at ([shift=(0:0.15)]c) {};
			\node[little dot](c2) at ([shift=(-25.5:0.15)]c1) {};
			\node[little dot](c3) at ([shift=(-25.5:0.15)]c2) {};
    	\end{tikzpicture}
	\end{equation*}
	(where circles denote cliques).

	As in \cref{ExtremalLeafCliqueExist}, set $A=G\setminus(L\setminus\{w\})$. $A$ has fewer cut vertices than $G$, and so we can find some cut vertex $v$ of $A$ satisfying \ref{ClaimedEquality} for $A$ by induction. We claim that this $v$ also satisfies \ref{ClaimedEquality} for $G$ itself.
	
	Take any $S\in\mathcal{C}(G'')$. We aim to show that $S\cup\{v\}\in\mathcal{C}(G)$.

	Now, clearly $S\setminus\{w\}\in\mathcal{C}(A'')$ (note that we do not necessarily have $w\in S$), and so by the inductive hypothesis we have
	\begin{equation*}\label{SWVInCG}
		(S\setminus\{w\})\cup\{v\}\in\mathcal{C}(A)\subseteq\mathcal{C}(G)\tag*{$(\diamond)$}
	\end{equation*}
	Then if $w\notin S$ we are done, so suppose that $w\in S$.

	For any $u\in S$, adding $u$ back to $G''\setminus S$ reconnects at least two vertices $a_u,b_u\in N_{G''}(u)\setminus S$ lying in separate connected connected components of $G''\setminus S$ since $S\in\mathcal{C}(G'')$. Neither $a_u$ nor $b_u$ can be $v$ since $v\notin V(G'')$, and so
	\begin{equation*}
		a_u,b_u\in N_G(u)\setminus(S\cup\{v\})
	\end{equation*}
	Furthermore, we have
	\begin{equation*}
		G''\setminus S=G\setminus(S\cup\{v\})
	\end{equation*}
	so $a_u$ and $b_u$ will also lie in separate connected components of $G\setminus(S\cup\{v\})$, and will still be reconnected by adding $u$ back to $G\setminus(S\cup\{v\})$.

	Note in particular that, since $w\in S$, we have at least two vertices
	\begin{equation*}
		a_w,b_w\in N_G(w)\setminus(S\cup\{v\})
	\end{equation*}
	and since $w$ belongs to only two cliques, $L$ and $B$, in $G$, we may assume (without loss of generality) that $a_w\in V(L)$ and $b_w\in V(B)$, as $a_w$ and $b_w$ must lie in separate connected components of $G''\setminus S$.

	To conclude the proof, we wish to show that adding $v$ back to $G\setminus(S\cup\{v\})$ reconnects (at least) two vertices lying in separate connected components of $G\setminus(S\cup\{v\})$.

	For brevity, let $U=(S\setminus\{w\})\cup\{v\}$. We saw in \ref{SWVInCG} that $U\in\mathcal{C}(G)$, and so adding $v$ back to $G\setminus U$ reconnects at least two vertices $a_v,b_v\in N_G(v)\setminus U$ lying in separate connected connected components of $G\setminus U$. Note that neither $a_v$ nor $b_v$ can belong to $L$, since $v\neq w$ and $w$ is the only cut vertex belonging to $L$.

	Removing $w$ from $G\setminus U$ simply disconnects $B$ and $L$, so, if neither $a_v$ nor $b_v$ is $w$, $a_v$ and $b_v$ will trivially lie in separate connected components of $G\setminus(S\cup\{v\})$, and will still be reconnected by adding $v$ back to $G\setminus(S\cup\{v\})$.

	However, if we have say $b_v=w$, then $b_v\notin G\setminus(S\cup\{v\})$, and so the result does not immediately follow. In this case, we have $v\in V(B)$, since the only branch clique that $w$ belongs to is $B$, and so every cut vertex that $w$ is adjacent to must belong to $B$ also. Then we are in the following situation:
	\begin{equation*}
		G=\quad\begin{tikzpicture}[x=1.5cm,y=1.5cm,every node/.style={circle,draw=black,fill=black,inner sep=0pt,minimum size=5pt},label distance=0.15cm,line width=0.25mm,baseline={([yshift=-0.5ex]current bounding box.center)}]

			\node[style={circle,draw=black,fill=none,inner sep=0pt,minimum size=1.25cm}](L) at (0,0) {$L$};
            \node[label={[label distance=0.375cm]90:$w$}](w) at ([shift=(0:0.625cm)]L) {};
			\node[label={[label distance=0.06cm]-90:$a_w$}](aw) at ([shift=(-90:0.625cm)]L) {};
			\node[style={circle,draw=black,fill=none,inner sep=0pt,minimum size=1.25cm}](B) at ([shift=(0:1.25cm)]L) {$B$};
			\node[label={[label distance=0.41cm]90:$v$}](v) at ([shift=(0:0.625cm)]B) {};
			\node[label={[label distance=0cm]-90:$b_w$}](bw) at ([shift=(-90:0.625cm)]B) {};
			\node[style={circle,draw=black,fill=none,inner sep=0pt,minimum size=1.25cm}](C) at ([shift=(0:1.25cm)]B) {};
			\node[label={[label distance=0.0925cm]-90:$a_v$}](av) at ([shift=(-90:0.625cm)]C) {};

			\node[style={draw=none,fill=none},label={}](a) at ([shift=(0:0.625cm)]C) {};
			\node[style={draw=none,fill=none},label={}](b) at ([shift=(45:0.625cm)]C) {};
			\node[style={draw=none,fill=none},label={}](c) at ([shift=(-45:0.625cm)]C) {};

			\tikzset{little dot/.style={circle,draw=black,fill=black,inner sep=0pt,minimum size=1pt}}

			\node[little dot](a1) at ([shift=(0:0.1)]a) {};
			\node[little dot](a2) at ([shift=(0:0.15)]a1) {};
			\node[little dot](a3) at ([shift=(0:0.15)]a2) {};

			\node[little dot](b1) at ([shift=(0:0.15)]b) {};
			\node[little dot](b2) at ([shift=(25.5:0.15)]b1) {};
			\node[little dot](b3) at ([shift=(25.5:0.15)]b2) {};

			\node[little dot](c1) at ([shift=(0:0.15)]c) {};
			\node[little dot](c2) at ([shift=(-25.5:0.15)]c1) {};
			\node[little dot](c3) at ([shift=(-25.5:0.15)]c2) {};

			\node[little dot](v1) at ([shift=(-90:0.4cm)]v) {};
			\node[little dot](v2) at ([shift=(-90:0.15)]v1) {};
			\node[little dot](v3) at ([shift=(-90:0.15)]v2) {};

    	\end{tikzpicture}
	\end{equation*}
	We may then instead consider $a_v$ and $b_w\in V(B)\setminus(S\cup\{v\})$. We have $b_w\in N_G(v)\setminus(S\cup\{v\})$, $a_v$ and $b_w$ will clearly lie in separate connected components of $G\setminus(S\cup\{v\})$, and adding $v$ back to $G\setminus(S\cup\{v\})$ reconnects $a_v$ and $b_w$. Then $S\cup\{v\}\in\mathcal{C}(G)$, and we are done.
\end{proof}

We can compute $\mathcal{C}(H)$ for such a $v$:

\begin{proposition}\label{HSepSets}
	Let $v$ be as in \cref{GoodCVExistence}. Then we have
	\begin{equation*}
		\mathcal{C}(H)=\{S\in\mathcal{C}(G):\text{\normalfont $v\notin S$ and $N_G(v)\nsubseteq S$}\}
	\end{equation*}
\end{proposition}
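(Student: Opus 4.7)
The plan is to prove each inclusion separately, relying primarily on \cref{PathEquiv}. That lemma identifies the connected components of $G\setminus S$ and $H\setminus S$ restricted to $V(G)\setminus(S\cup\{v\})$ whenever $v\notin S$; the only discrepancy is that $G\setminus S$ additionally contains $v$, which in $G\setminus S$ merges the components of $H\setminus S$ meeting $N_G(v)\setminus S$ into the single component containing $v$. This will also control when a new witness for ``adding $w$ back merges two components'' can be produced on either side.

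For $(\supseteq)$, I will fix $S\in\mathcal{C}(G)$ with $v\notin S$ and $N_G(v)\nsubseteq S$, and take any $w\in S$. Because $S\in\mathcal{C}(G)$, there exist $a,b\in N_G(w)\setminus S$ in different components of $G\setminus S$. If $a,b\neq v$, the edges $\{a,w\},\{b,w\}$ of $G$ survive in $H$, and \cref{PathEquiv} places $a,b$ in different components of $H\setminus S$. If $b=v$, I will invoke $N_G(v)\nsubseteq S$ to pick $u\in N_G(v)\setminus S$; since $u,w\in N_G(v)$, the edge $\{u,w\}$ belongs to $H$ as a consequence of completing $N_G(v)$, and \cref{PathEquiv} places $u$ (which lies in the component of $v$ in $G\setminus S$) and $a$ in different components of $H\setminus S$. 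Either way, adding $w$ back to $H\setminus S$ merges two components, giving $S\in\mathcal{C}(H)$.

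For $(\subseteq)$, fix $S\in\mathcal{C}(H)$; the condition $v\notin S$ is automatic since $V(H)=V(G)\setminus\{v\}$. To obtain $S\in\mathcal{C}(G)$, I will take $w\in S$ and witnesses $a,b\in N_H(w)\setminus S$ in different components of $H\setminus S$, splitting into cases by whether each of $a,b$ lies in $N_G(w)$ or is a ``new'' neighbor arising from completing $N_G(v)$. If both lie in $N_G(w)$, \cref{PathEquiv} places them in different components of $G\setminus S$ as well. If neither lies in $N_G(w)$, both $a,b\in N_G(v)$ and hence both sit in the component of $v$ in $G\setminus S$, contradicting \cref{PathEquiv} and making this case vacuous. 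If exactly one, say $b$, fails to lie in $N_G(w)$, then $b,w\in N_G(v)$, so in $G\setminus S$ the vertex $w$ is adjacent to $a$ as well as to $v$ (which is in the component of $b$), and these components are distinct by \cref{PathEquiv}. This case analysis will be the main obstacle, as the new edges of $H$ make it less obvious that $w$ still witnesses a merge in $G$.

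To rule out $N_G(v)\subseteq S$, I will argue by contradiction: were this to hold, \cref{SepSetContainment} would give $S\in\mathcal{C}(G'')$, and the choice of $v$ from \cref{GoodCVExistence} would upgrade this to $S\cup\{v\}\in\mathcal{C}(G)$. But $N_G(v)\subseteq S$ makes $v$ isolated in $G\setminus S$, so removing $v$ from $S\cup\{v\}$ strictly \emph{increases} the component count, which contradicts $S\cup\{v\}\in\mathcal{C}(G)$.
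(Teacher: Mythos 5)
Your proof is correct and takes essentially the same route as the paper's: both inclusions hinge on \cref{PathEquiv}, and the possibility $N_G(v)\subseteq S$ is ruled out exactly as in the paper via \cref{SepSetContainment} and the choice of $v$ from \cref{GoodCVExistence}. Your explicit case analysis in the $(\subseteq)$ direction (splitting on whether $a$, $b$ lie in $N_G(w)$ or are new neighbours created by completing $N_G(v)$, with the "neither" case vacuous by \cref{PathEquiv}) fills in a detail that the paper's proof leaves implicit when it asserts that $a$ and $b$ "will be reconnected in $G\setminus S$ by adding $w$ back," and similarly your handling of the case $b=v$ in the $(\supseteq)$ direction makes precise the paper's terse "we may replace $v$ with a neighbour."
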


\begin{proof}
	We will first show that
	\begin{equation*}
		\mathcal{C}(H)\supseteq\{S\in\mathcal{C}(G):\text{$v\notin S$ and $N_G(v)\nsubseteq S$}\}
	\end{equation*}
	Take any $S\in\mathcal{C}(G)$ such that $v\notin S$ and $N_G(v)\nsubseteq S$, and any $w\in S$. To show that $S\in\mathcal{C}(H)$, we must show that adding $w$ back to $H\setminus S$ reconnects (at least) two vertices lying in separate connected components of $H\setminus S$. We know that adding $w$ back to $G\setminus S$ reconnects at least two vertices $a,b\in N_G(w)\setminus S$ lying in separate connected components of $G\setminus S$ since $S\in\mathcal{C}(G)$. We may assume that neither $a$ nor $b$ is $v$, since $N_G(v)\nsubseteq S$ and so we may replace $v$ with a neighbour if necessary. By \cref{PathEquiv}, $a$ and $b$ will lie in separate connected components of $H\setminus S$ also, and will be reconnected in $H\setminus S$ by adding $w$ back to $H\setminus S$. Then $S\in\mathcal{C}(H)$, and the first inclusion follows.

	We will now show that
	\begin{equation*}
		\mathcal{C}(H)\subseteq\{S\in\mathcal{C}(G):\text{$v\notin S$ and $N_G(v)\nsubseteq S$}\}
	\end{equation*}
	Take any $S\in\mathcal{C}(H)$. Trivially, $v\notin S$. If $N_G(v)\subseteq S$, then, by \cref{SepSetContainment}, we have $S\in\mathcal{C}(G'')$. Since $v$ is as in \cref{GoodCVExistence}, we then have $S\cup\{v\}\in\mathcal{C}(G)$. But $N_G[v]\subseteq S\cup\{v\}$, so this clearly cannot belong to $\mathcal{C}(G)$, since adding $v$ back to $G\setminus(S\cup\{v\})$ would not reconnect any separate connected components of $G\setminus(S\cup\{v\})$. Then we must have $N_G(v)\nsubseteq S$.

	Now take any $w\in S$. To show that $S\in\mathcal{C}(G)$, we must show that adding $w$ back to $G\setminus S$ reconnects (at least) two vertices lying in separate connected components of $G\setminus S$. We know that adding $w$ back to $H\setminus S$ reconnects at least two vertices $a,b\in N_H(w)\setminus S$ lying in separate connected components of $H\setminus S$ since $S\in\mathcal{C}(H)$. By \cref{PathEquiv}, $a$ and $b$ will lie in separate connected components of $G\setminus S$ also, and will be reconnected in $G\setminus S$ by adding $w$ back to $G\setminus S$. Then $S\in\mathcal{C}(G)$, and the result follows.
\end{proof}

To summarise this \lcnamecref{CVPropsSection}, when $v$ is as in \cref{GoodCVExistence}, we have
\begin{itemize}
	\item $\mathcal{C}(G')=\{S\in\mathcal{C}(G):v\notin S\}$ by \cref{GPrimeSepSets}.
	\item $\mathcal{C}(G'')=\{S\setminus\{v\}:\text{$S\in\mathcal{C}(G)$ with $v\in S$}\}$ by \cref{GoodCVExistence}.
	\item $\mathcal{C}(H)=\{S\in\mathcal{C}(G):\text{$v\notin S$ and $N_G(v)\nsubseteq S$}\}$ by \cref{HSepSets}.
\end{itemize}

\pagebreak

\section{The Main Theorem}

Our goal is to show that, for any block graph $G$, we have
\begin{equation*}\label{BGMainThmIntro}
	\MinAtt_R(H_\mathfrak{m}^i(R/\mathcal{J}_G))=\{\mathfrak{p}\in\Ass_R(R/\mathcal{J}_G):\dim(R/\mathfrak{p})=i\}
\end{equation*}
\label{LRRComment}As noted in \hyperref[Abstract]{the abstract}, this follows from a recent result of Lax, Rinaldo, and Romeo (\cite[Theorem 3.2]{laxSequentiallyCohenMacaulayBinomial2024}), which was published independently during the writing of this paper. We first present our original proof, followed by an \hyperref[MainBGThmAltProof]{alternative proof} using their result.

We must prove a few results beforehand.

\subsection{A Preliminary Lemma}

\begin{proposition}\label{AssFreeExtensionProp}
	Let $R$ be a ring and $z$ an indeterminate. Furthermore, set $S=R[z]$, and let $M$ be an $S$-module such that $zM=0$, viewed also as an $R$-module in the natural way. Then
	\begin{equation*}
		\Ass_S(M)=\{\mathfrak{q}S+(z):\mathfrak{q}\in\Ass_R(M)\}
	\end{equation*}
\end{proposition}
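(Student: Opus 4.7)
The plan is to reduce both inclusions to a single annihilator computation: I claim that for every $m\in M$, one has $\Ann_S(m)=\Ann_R(m)\cdot S+(z)$. Once this identity is established, the result will follow easily from the standard correspondence between primes of $S=R[z]$ containing $z$ and primes of $R$, via the isomorphism $S/(z)\cong R$.

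For the key computation, let $I=\Ann_R(m)$. The inclusion $IS+(z)\subseteq\Ann_S(m)$ is immediate, since $I$ kills $m$ and $zM=0$ by hypothesis. For the reverse inclusion, write any $f\in S$ uniquely as $f=a_0+zg$ with $a_0\in R$ and $g\in S$; then $fm=a_0m$ (because $zm=0$), so $f\in\Ann_S(m)$ forces $a_0\in I$, giving $f\in IS+(z)$. I expect this step to be entirely routine -- indeed, the only mild subtlety anywhere in the proof is that I am using an $S$-module whose $S$-action factors through $R$, and this step is exactly where that factoring is exploited.

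With this identity in hand, the two inclusions unfold symmetrically. For the forward direction, take $\mathfrak{q}\in\Ass_R(M)$, so $\mathfrak{q}=\Ann_R(m)$ for some $m\in M$; then $\Ann_S(m)=\mathfrak{q}S+(z)$, and this ideal is prime in $S$ because $S/(\mathfrak{q}S+(z))\cong R/\mathfrak{q}$ is a domain, so $\mathfrak{q}S+(z)\in\Ass_S(M)$. For the reverse direction, take $\mathfrak{P}\in\Ass_S(M)$, say $\mathfrak{P}=\Ann_S(m)$. Since $zm=0$ we have $z\in\mathfrak{P}$, and setting $\mathfrak{q}=\Ann_R(m)=\mathfrak{P}\cap R$, the identity gives $\mathfrak{P}=\mathfrak{q}S+(z)$. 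Finally, $\mathfrak{q}$ is prime in $R$ because $\mathfrak{P}\cap R$ is the contraction of a prime, which exhibits $\mathfrak{q}\in\Ass_R(M)$ as witnessed by the same element $m$.

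The main obstacle, if one can call it that, is simply to notice that the decomposition $f=a_0+zg$ makes the annihilator calculation transparent; once that is written down, there is essentially no content left, and the result is really just the statement that $\Spec(S/(z))\hookrightarrow\Spec(S)$ identifies $\Ass_R(M)$ with $\Ass_S(M)$ whenever $M$ lives over the subring $S/(z)\cong R$.
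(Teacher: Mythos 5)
Your proof is correct and takes essentially the same approach as the paper's: both hinge on the identity $\Ann_S(m)=\Ann_R(m)S+(z)$ and conclude via the isomorphism $S/(\mathfrak{q}S+(z))\cong R/\mathfrak{q}$. The only differences are cosmetic — you spell out the proof of the annihilator identity (via $f=a_0+zg$) where the paper simply asserts it, and for the direction $\Ass_S(M)\subseteq\{\mathfrak{q}S+(z):\mathfrak{q}\in\Ass_R(M)\}$ you invoke that $\mathfrak{q}=\mathfrak{P}\cap R$ is a contraction of a prime, whereas the paper instead observes that $R/\Ann_R(m)\cong S/\mathfrak{p}$ is a domain.
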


\begin{proof}
	First, note that
	\begin{equation*}
		\Ann_S(N)=\Ann_R(N)S+(z)
	\end{equation*}
	for any $S$-submodule $N$ of $M$ (with $N$ also viewed as an $R$-module in the natural way).

	If $\mathfrak{p}\in\Ass_S(M)$ then $\mathfrak{p}=\Ann_S(m)$ for some $m\in M$. We have that
	\begin{equation*}
		R/\Ann_R(m)\cong S/(\Ann_R(m)S+(z))=S/\mathfrak{p}
	\end{equation*}
	is an integral domain since $\mathfrak{p}\in\Spec(S)$, so $\Ann_R(m)\in\Ass_R(M)$ and therefore 
	\begin{equation*}
		\mathfrak{p}=\Ann_R(M)S+(z)
	\end{equation*}
	is of the desired form.

	Conversely, if $\mathfrak{q}\in\Ass_R(M)$ then $\mathfrak{q}=\Ann_R(m)$ for some $m\in M$. We have that
	\begin{equation*}
		S/\Ann_S(m)=S/(\Ann_R(m)S+(z))\cong R/\Ann_R(M)=R/\mathfrak{q}
	\end{equation*}
	is an integral domain since $\mathfrak{q}\in\Spec(R)$, so
	\begin{equation*}
		\mathfrak{q}S+(z)=\Ann_S(m)\in\Ass_S(M)
	\end{equation*}
	and we are done.
\end{proof}

\begin{lemma}\label{ExtAssLemma}
	Let $R$ be a ring, $\mathfrak{a}$ an ideal of $R$, and $z$ an indeterminate. Set $S=R[z]$, and let $\mathfrak{b}=\mathfrak{a}S+(z)$. Then
	\begin{equation*}
		\Ass_S(\Ext_S^i(S/\mathfrak{b},S))=\{\mathfrak{q}S+(z):\mathfrak{q}\in\Ass_R(\Ext_R^{i-1}(R/\mathfrak{a},R))\}
	\end{equation*}
\end{lemma}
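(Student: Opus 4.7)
The plan is to reduce the lemma to \cref{AssFreeExtensionProp} by identifying $\Ext_S^i(S/\mathfrak{b},S)$ with $\Ext_R^{i-1}(R/\mathfrak{a},R)$ as $R$-modules, and verifying that $z$ annihilates it. The latter is immediate: since $z\in\mathfrak{b}=\Ann_S(S/\mathfrak{b})$, multiplication by $z$ is the zero map on $S/\mathfrak{b}$, so it induces the zero map on $\Ext_S^i(S/\mathfrak{b},S)$ as well (via functoriality in the first argument). So once the $R$-module identification is in hand, \cref{AssFreeExtensionProp} yields the result directly.

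To establish the $R$-module identification, first note that $S/\mathfrak{a}S \cong (R/\mathfrak{a})[z]$, on which $z$ is a non-zero-divisor, and quotienting by $z$ gives $S/\mathfrak{b}$. This yields the short exact sequence of $S$-modules
\begin{equation*}
    0 \to S/\mathfrak{a}S \xrightarrow{z} S/\mathfrak{a}S \to S/\mathfrak{b} \to 0.
\end{equation*}
Applying $\Hom_S(-,S)$ gives a long exact sequence
\begin{equation*}
    \cdots \to \Ext_S^{i-1}(S/\mathfrak{a}S,S) \xrightarrow{z} \Ext_S^{i-1}(S/\mathfrak{a}S,S) \to \Ext_S^i(S/\mathfrak{b},S) \to \Ext_S^i(S/\mathfrak{a}S,S) \xrightarrow{z} \cdots
\end{equation*}
Since $S$ is a free (in particular flat) $R$-module, flat base change gives
\begin{equation*}
    \Ext_S^j(S/\mathfrak{a}S,S) \cong \Ext_R^j(R/\mathfrak{a},R) \otimes_R S,
\end{equation*}
on which $z$ acts by multiplication on the polynomial ring variable, which is injective. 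Therefore the connecting map $\Ext_S^i(S/\mathfrak{b},S)\to\Ext_S^i(S/\mathfrak{a}S,S)$ lands in the kernel of multiplication by $z$, hence is zero, and the long exact sequence collapses to
\begin{equation*}
    \Ext_S^i(S/\mathfrak{b},S) \cong \Ext_S^{i-1}(S/\mathfrak{a}S,S)/z\Ext_S^{i-1}(S/\mathfrak{a}S,S) \cong \Ext_R^{i-1}(R/\mathfrak{a},R)
\end{equation*}
as $R$-modules (where the last step just quotients $\Ext_R^{i-1}(R/\mathfrak{a},R)[z]$ by $z$).

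With this identification established, \cref{AssFreeExtensionProp} applied to $M = \Ext_S^i(S/\mathfrak{b},S)$ gives
\begin{equation*}
    \Ass_S(\Ext_S^i(S/\mathfrak{b},S)) = \{\mathfrak{q}S+(z) : \mathfrak{q} \in \Ass_R(\Ext_S^i(S/\mathfrak{b},S))\} = \{\mathfrak{q}S+(z) : \mathfrak{q} \in \Ass_R(\Ext_R^{i-1}(R/\mathfrak{a},R))\},
\end{equation*}
as required. The main obstacle is verifying that the isomorphism $\Ext_S^i(S/\mathfrak{b},S) \cong \Ext_R^{i-1}(R/\mathfrak{a},R)$ respects the $R$-module structures in a way that makes the final invocation of \cref{AssFreeExtensionProp} rigorous; this amounts to checking that flat base change, the connecting homomorphism, and the quotient map are all $R$-linear in the obvious ways, which is routine but needs to be stated carefully.
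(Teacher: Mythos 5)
Your proof is correct, but it takes a genuinely different route from the paper's. The paper establishes the isomorphism $\Ext_S^i(S/\mathfrak{b},S)\cong\Ext_R^{i-1}(R/\mathfrak{a},R)$ in one stroke by citing Rees's theorem (\cite[Theorem 2.1]{reesTheoremHomologicalAlgebra1956}) with $A=S$, $\mathfrak{g}=(z)$, $M=S/\mathfrak{b}$, $N=S$, then invokes \cref{AssFreeExtensionProp} exactly as you do. You instead reprove the needed special case of Rees's theorem from scratch: the short exact sequence $0\to S/\mathfrak{a}S\xrightarrow{z}S/\mathfrak{a}S\to S/\mathfrak{b}\to 0$, the induced long exact sequence under $\Hom_S(-,S)$, flat base change to identify $\Ext_S^j(S/\mathfrak{a}S,S)\cong\Ext_R^j(R/\mathfrak{a},R)[z]$, and the observation that $z$ acting injectively there forces the long exact sequence to break into the desired short exact sequences. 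Your version is longer but more self-contained and makes the mechanism of the isomorphism visible; the paper's is shorter at the cost of deferring to an external reference. Two small points: the map $\Ext_S^i(S/\mathfrak{b},S)\to\Ext_S^i(S/\mathfrak{a}S,S)$ that you show vanishes is the map induced by the surjection $S/\mathfrak{a}S\twoheadrightarrow S/\mathfrak{b}$, not the connecting homomorphism (the connecting map is $\delta\colon\Ext_S^{i-1}(S/\mathfrak{a}S,S)\to\Ext_S^i(S/\mathfrak{b},S)$, which is the one you need to be surjective), though the argument is unaffected; and the flat base change step tacitly requires $R/\mathfrak{a}$ to admit a resolution by finitely generated free $R$-modules, which holds in the Noetherian setting where the lemma is applied but is not guaranteed by the hypotheses as literally stated (the paper's appeal to Rees's theorem carries analogous implicit finiteness assumptions, so neither proof is strictly more general).
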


\begin{proof}
	By \cite[Theorem 2.1]{reesTheoremHomologicalAlgebra1956} we have that
	\begin{equation*}
		\Ext_S^i(S/\mathfrak{b},S)\cong\Ext_R^{i-1}(R/\mathfrak{a},R)
	\end{equation*}
	as $S$-modules by setting $A=S$, $\mathfrak{g}=(z)$, $M=S/\mathfrak{b}$ and $N=S$ in their notation, since $S/\mathfrak{b}\cong R/\mathfrak{a}$ as $R$-modules, and $z\Ext_S^i(S/\mathfrak{b},S)=0$, so we are done by \cref{AssFreeExtensionProp}.
\end{proof}

\subsection{Proof of the Main Theorem}\label{MainBGThmProofSection}

\label{ThmEquivChar}By \cref{GradedAssAttDual}, we have
\begin{equation*}
	\Att_R(H_\mathfrak{m}^i(R/\mathcal{J}_G))=\Ass_R(\Ext_R^{2n-i}(R/\mathcal{J}_G,R))
\end{equation*}
and so \hyperref[BGMainThmIntro]{our main theorem} amounts to showing that
\begin{equation*}
	\MinAss_R(\Ext_R^i(R/\mathcal{J}_G,R))=\{\mathfrak{p}\in\Ass_R(R/\mathcal{J}_G):\hgt_R(\mathfrak{p})=i\}
\end{equation*}
since
\begin{equation*}
	\hgt_R(\mathfrak{p})=2n-\dim(R/\mathfrak{p})
\end{equation*}
by \cref{GradedCodim} (which we may apply since every associated prime of $\mathcal{J}_G$ is homogeneous by \cref{HomogAssPrime}).

We first introduce some notation:

\begin{notation}
	For any $R$-module $M$ and $i\geq0$, we set
	\begin{equation*}
		\Ass_R^i(M)\defeq\{\mathfrak{p}\in\Ass_R(M):\hgt_R(\mathfrak{p})=i\}
	\end{equation*}
\end{notation}

\begin{notation}
	For any ideal $\mathfrak{a}$ of $R$ and $i\geq0$, we set
	\begin{equation*}
		E_R^i(\mathfrak{a})\defeq\Ext_R^i(R/\mathfrak{a},R)
	\end{equation*}
\end{notation}

\begin{notation}
	For any $1\leq v\leq n$, we set
	\begin{equation*}
		R_v\defeq k[x_i,y_i:\text{$1\leq i\leq n$ with $i\neq v$}]
	\end{equation*}
\end{notation}

Our key proposition is as follows:

\begin{proposition}\label{ExtAssContainmentProp}
	Let $G$ be a block graph. Then any associated prime of $E_R^i(\mathcal{J}_G)$ contains an associated prime of $R/\mathcal{J}_G$ of height $i$.
\end{proposition}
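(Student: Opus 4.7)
The plan is to proceed by induction on $(n, \text{number of cut vertices of } G)$, ordered lexicographically. In the base case, $G$ has no cut vertices, so as a block graph it is a disjoint union of cliques on pairwise disjoint vertex sets. Then $\mathcal{J}_G$ is prime (a sum of primes for the generic $2 \times \smallAbs{V(K_i)}$ determinantal ideals in disjoint variables), and $R/\mathcal{J}_G$ is a tensor product of Cohen--Macaulay determinantal rings, hence itself Cohen--Macaulay. Consequently $E_R^i(\mathcal{J}_G) = 0$ unless $i = \hgt_R \mathcal{J}_G$, and in that degree the only associated prime is $\mathcal{J}_G$ itself.

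For the inductive step I would choose a cut vertex $v$ of $G$ as in \cref{GoodCVExistence} and apply $\Hom_R(-, R)$ to the short exact sequence of \cref{BGESSection}, obtaining
\begin{equation*}
	\cdots \to E_R^i(Q_3) \to E_R^i(Q_1) \oplus E_R^i(Q_2) \to E_R^i(\mathcal{J}_G) \to E_R^{i+1}(Q_3) \to \cdots
\end{equation*}
This exhibits $E_R^i(\mathcal{J}_G)$ as an extension of a submodule of $E_R^{i+1}(Q_3)$ by a homomorphic image of $E_R^i(Q_1) \oplus E_R^i(Q_2)$, so any $\mathfrak{p} \in \Ass_R E_R^i(\mathcal{J}_G)$ either lies in $\Ass_R E_R^{i+1}(Q_3)$ or contains some $\mathfrak{q} \in \Ass_R E_R^i(Q_1) \cup \Ass_R E_R^i(Q_2)$. (For the latter, if $\mathfrak{p} = \Ann_R \bar{m}$ with $\bar{m}$ the image of $m$, then $\mathfrak{p} \supseteq \Ann_R m$, so $\mathfrak{p}$ contains a minimal prime of $\Ann_R m$, which is associated to the submodule $Rm \cong R/\Ann_R m$.) It therefore suffices to handle the associated primes of $E_R^i(Q_1)$, $E_R^i(Q_2)$, and $E_R^{i+1}(Q_3)$ separately.

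The first case is immediate: $G'$ is a block graph with the same $n$ but strictly fewer cut vertices ($v$ is no longer one), so by induction any associated prime of $E_R^i(Q_1)$ contains some $P_S(G')$ with $S \in \mathcal{C}(G')$ of height $i$, and by \cref{GPrimeSepSets} and \cref{PathEquiv} this $S$ lies in $\mathcal{C}(G)$ with $P_S(G) = P_S(G')$ and the same height. For the other two cases I would apply \cref{ExtAssLemma} twice (once for $y_v$ and once for $x_v$ over the subring $R_v$) to obtain
\begin{align*}
	\Ass_R E_R^i(Q_2) &= \{\mathfrak{q} R + (x_v, y_v) : \mathfrak{q} \in \Ass_{R_v} E_{R_v}^{i-2}(\mathcal{J}_{G''})\}, \\
	\Ass_R E_R^{i+1}(Q_3) &= \{\mathfrak{q} R + (x_v, y_v) : \mathfrak{q} \in \Ass_{R_v} E_{R_v}^{i-1}(\mathcal{J}_H)\}.
\end{align*}
Since $G''$ and $H$ are both block graphs on $n-1$ vertices, induction applies. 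For $Q_2$, the resulting $\mathfrak{q}$ contains some $P_{S_0}(G'')$ with $S_0 \in \mathcal{C}(G'')$ of $R_v$-height $i-2$; by \cref{GoodCVExistence}, $S_0 \cup \{v\} \in \mathcal{C}(G)$, and $P_{S_0 \cup \{v\}}(G) = P_{S_0}(G'')R + (x_v, y_v)$ has $R$-height $i$. For $Q_3$, the resulting $\mathfrak{q}$ contains some $P_S(H)$ with $S \in \mathcal{C}(H)$ of $R_v$-height $i-1$; by \cref{HSepSets} this $S$ lies in $\mathcal{C}(G)$ with $N_G(v) \not\subseteq S$, so $c_G(S) = c_H(S)$, giving $\hgt_R P_S(G) = i$, while $P_S(G) \subseteq P_S(H)R + (x_v, y_v)$ since the only extra generators of $P_S(G)$ are of the form $\delta_{v,u} = x_v y_u - x_u y_v \in (x_v, y_v)$.

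The main obstacle, I expect, will be the $Q_3$ case: unlike for $Q_2$, the prime $P_S(H)R + (x_v, y_v)$ is \emph{not} itself an associated prime of $R/\mathcal{J}_G$, so we cannot simply translate $\mathfrak{q}$ back to $R$; we must locate a strictly smaller prime $P_S(G)$ of the correct height inside it. It is precisely the careful choice of $v$ via \cref{GoodCVExistence}, and the resulting description of $\mathcal{C}(H)$ in \cref{HSepSets} (in particular the condition $N_G(v) \not\subseteq S$), that makes this translation possible.
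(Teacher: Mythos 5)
Your proposal is correct and follows essentially the same route as the paper: the same double induction on $(n, \text{number of cut vertices})$, the same base case (disjoint unions of cliques, hence Cohen--Macaulay), the same choice of cut vertex via \cref{GoodCVExistence}, the same long exact sequence, the same two applications of \cref{ExtAssLemma} for $Q_2$ and $Q_3$, and the same translations back to $\mathcal{C}(G)$ via \cref{GPrimeSepSets}, \cref{GoodCVExistence}, and \cref{HSepSets}. The only cosmetic difference is your case split: you argue directly with associated primes of the two ends of the extension (including the correct observation that an associated prime of a quotient contains a minimal prime of the annihilator of a lift, hence an associated prime of the original module), whereas the paper localizes at $\mathfrak{p}$ and splits on whether $A^i_\mathfrak{p}$ vanishes, landing in $\Supp$ rather than $\Ass$ of $E^{i+1}_R(Q_3)$, $E^i_R(Q_1)$, or $E^i_R(Q_2)$; both yield the same ``$\mathfrak{p}$ contains some associated prime of one of the three modules'' and the rest is identical.
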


\begin{proof}
	When $G$ is a disjoint union of cliques, $R/\mathcal{J}_G$ is Cohen-Macaulay (this follows, for example, from \cite[Corollary 2.8]{brunsDeterminantalRings1988} and \cite[Theorem 2.1]{bouchibaTensorProductsCohen2002}). By \cref{GradedTopLCAtt}, the attached primes of $H_\mathfrak{m}^d(R/\mathcal{J}_G)$ are exactly the associated primes of $R/\mathcal{J}_G$ of height $\dim(R/\mathcal{J}_G)$. Since $R/\mathcal{J}_G$ is Cohen-Macaulay in this case, it has no other such local cohomology modules by \cref{GradedCMThm}, and so the result is immediate.

	We will induct first on $n$, the number vertices of $G$. When $n=2$, we must have $G=K_2$, so $G$ is a clique. We have already dealt with this case, and so the base case is established.

	We now induct on the number of cut vertices of $G$. When $G$ has no cut vertices, it must be a disjoint union of cliques. Again, we have already dealt with this case, and so the base case is established for this induction also.

	We may assume then that $G$ contains a cut vertex, and can choose such a cut vertex $v$ of $G$ as in \cref{GoodCVExistence}. We adopt the notation of \cref{BGESSection}.

	The short exact sequence
	\begin{center}
		\begin{tikzcd}
			0 \arrow[r] & R/\mathcal{J}_G \arrow[r,"",hook] & R/Q_1\oplus R/Q_2\arrow[r,"",two heads] & R/Q_3 \arrow[r] & 0
		\end{tikzcd}
	\end{center}
	gives rise to the long exact sequence
	\begin{center}
		\begin{tikzcd}
			\cdots \arrow[r,"\alpha_i"] & E^i_R(Q_1)\oplus E^i_R(Q_2) \arrow[r,""] & E^i_R(\mathcal{J}_G) \arrow[r,""] & E^{i+1}_R(Q_3) \arrow[r,"\alpha_{i+1}"] & \cdots
		\end{tikzcd}
	\end{center}
	Setting
	\begin{equation*}
		A^i\defeq(E_R^i(Q_1)\oplus E_R^i(Q_2))/\im(\alpha_i)
	\end{equation*}
	we then have the short exact sequence
	\begin{center}
		\begin{tikzcd}
			0 \arrow[r] & A^i \arrow[r,"",hook] & E_R^i(\mathcal{J}_G) \arrow[r,"",two heads] & \ker(\alpha_{i+1}) \arrow[r] & 0
		\end{tikzcd}
	\end{center}
	Suppose that $E_R^i(\mathcal{J}_G)\neq0$, and take any $\mathfrak{p}\in\Ass_R(E_R^i(\mathcal{J}_G))$. Localising at $\mathfrak{p}$ gives us the short exact sequence
	\begin{center}
		\begin{tikzcd}
			0 \arrow[r] & A_\mathfrak{p}^i \arrow[r,"",hook] & E_R^i(\mathcal{J}_G)_\mathfrak{p} \arrow[r,"",two heads] & \ker(\alpha_{i+1})_\mathfrak{p} \arrow[r] & 0
		\end{tikzcd}
	\end{center}
	
	If $A_\mathfrak{p}^i=0$, then
	\begin{equation*}
		E_R^i(\mathcal{J}_G)_\mathfrak{p}=\ker(\alpha_{i+1})_\mathfrak{p}\hookrightarrow E_R^{i+1}(Q_3)_\mathfrak{p}
	\end{equation*}
	so $\mathfrak{p}\in\Supp_R(E_R^{i+1}(Q_3))$. Then $\mathfrak{p}$ contains some associated prime of $E_R^{i+1}(Q_3)$, and
	\begin{equation*}
		\Ass_R(E_R^{i+1}(Q_3))=\{\mathfrak{q}R+(x_v,y_v):\mathfrak{q}\in\Ass_{R_v}\hspace{-0.05cm}(E_{R_v}^{i-1}(\mathcal{J}_H))\}
	\end{equation*}
	by \cref{ExtAssLemma} (where $\mathcal{J}_H$ is viewed here as an ideal in $R_v$).

	By the (first) inductive hypothesis, we have
	\begin{equation*}
		\MinAss_{R_v}\hspace{-0.05cm}(E_{R_v}^{i-1}(\mathcal{J}_H))=\{\mathfrak{q}\in\Ass_{R_v}\hspace{-0.05cm}(R_v/\mathcal{J}_H):\hgt_{R_v}\hspace{-0.05cm}(\mathfrak{q})=i-1\}
	\end{equation*}
	We know then that $\mathfrak{p}$ contains $\mathfrak{q}R+(x_v,y_v)$ for some $\mathfrak{q}\in\Ass_{R_v}\hspace{-0.05cm}(R_v/\mathcal{J}_H)$ with $\hgt_{R_v}\hspace{-0.05cm}(\mathfrak{q})=i-1$, and we can write $\mathfrak{q}=P_S(H)$ for some $S\in\mathcal{C}(H)$. We claim that $P_S(G)\subseteq\mathfrak{p}$, and that $\hgt_R(P_S(G))=i$.

	This first claim follows since
	\begin{equation*}
		P_S(G)\subseteq P_S(H)R+(x_v,y_v)
	\end{equation*}
	because the only edges of $G\setminus S$ which may not belong to $H\setminus S$ are of the form $\{v,w\}$ for some $w\in N_G(v)$, and so any elements of $P_S(G)$ introduced by these edges will be included in $(x_v,y_v)$. We will now prove the second claim.

	By \cref{HSepSets}, we have
	\begin{equation*}
		\mathcal{C}(H)=\{S\in\mathcal{C}(G):\text{$v\notin S$ and $N_G(v)\nsubseteq S$}\}
	\end{equation*}
	Then $c_G(S)=c_H(S)$, since $v$ is the only vertex in $G\setminus S$ which is not in $H\setminus S$, and $N_G(v)\nsubseteq S$, so $v$ will belong to the same connected component as at least one of its neighbours in $G\setminus S$.

	By \cref{HeightLemma}, we then have
	\begin{equation*}
		\hgt_R(P_S(G))=\smallAbs{S}+n-c_G(S)=(\smallAbs{S}+(n-1)-c_H(S))+1=\hgt_{R_v}\hspace{-0.05cm}(P_S(H))+1=i
	\end{equation*}
	as desired, and so the result holds in the case that $A^i_\mathfrak{p}=0$.
	
	If $A^i_\mathfrak{p}\neq0$, then we must have either $\mathfrak{p}\in\Supp_R(E_R^i(Q_1))$, or $\mathfrak{p}\in\Supp_R(E_R^i(Q_2))$.

	In the case that $\mathfrak{p}\in\Supp_R(E_R^i(Q_1))$, we have that $\mathfrak{p}$ contains some associated prime of $E_R^i(Q_1)$. Note that $G'$ has fewer cut vertices than $G$ (since we have completed the neighbourhood of $v$, and so it cannot be a cut vertex). Then we can apply the (second) inductive hypothesis to obtain
	\begin{equation*}
		\MinAss_R(E_R^i(Q_3))=\{\mathfrak{q}\in\Ass_R(R/\mathcal{J}_{G'}):\hgt_R(\mathfrak{q})=i\}
	\end{equation*}
	Then $\mathfrak{p}$ contains some $\mathfrak{q}\in\Ass_R(R/\mathcal{J}_{G'})$ with $\hgt_R(\mathfrak{q})=i$, and we can write $\mathfrak{q}=P_S(G')$ for some $S\in\mathcal{C}(G')$. We also have $S\in\mathcal{C}(G)$ by \cref{GPrimeSepSets}, and $P_S(G)\subseteq P_S(G')$ (since we have only added edges to $G$ to obtain $G$'), so we are done with this case if we can show that $\hgt_R(P_S(G))=i$. By \cref{HeightLemma}, this amounts to showing that $c_G(S)=c_{G'}(S)$. This follows from \cref{PathEquiv}, which we may apply since $S\in\mathcal{C}(G')$ and so $v\notin S$ by \cref{GPrimeSepSets}. Then the result holds in this case also.

	Finally, in the case that $\mathfrak{p}\in\Supp_R(E_R^i(Q_2))$, we have that $\mathfrak{p}$ contains some associated prime of $E_R^i(Q_2)$, and
	\begin{equation*}
		\Ass_R(E_R^i(Q_2))=\{\mathfrak{q}R+(x_v,y_v):\mathfrak{q}\in\Ass_{R_v}\hspace{-0.05cm}(E^{i-2}(\mathcal{J}_{G''}))\}
	\end{equation*}
	by \cref{ExtAssLemma} (where $\mathcal{J}_{G''}$ is viewed here as an ideal in $R_v$).

	By the (first) inductive hypothesis, we have
	\begin{equation*}
		\MinAss_{R_v}\hspace{-0.05cm}(E_{R_v}^{i-2}(\mathcal{J}_{G''}))=\{\mathfrak{q}\in\Ass_{R_v}\hspace{-0.05cm}(R_v/\mathcal{J}_{G''}):\hgt_{R_v}\hspace{-0.05cm}(\mathfrak{q})=i-2\}
	\end{equation*}
	We know then that $\mathfrak{p}$ contains $\mathfrak{q}R+(x_v,y_v)$ for some $\mathfrak{q}\in\Ass_{R_v}\hspace{-0.05cm}(R_v/\mathcal{J}_{G''})$ with $\hgt_{R_v}\hspace{-0.05cm}(\mathfrak{q})=i-2$, and we can write $\mathfrak{q}=P_S(G'')$ for some $S\in\mathcal{C}(G'')$.

	Now, we have $S\cup\{v\}\in\mathcal{C}(G)$ (since we have chosen $v$ to be as in \cref{GoodCVExistence}), and
	\begin{equation*}
		P_{S\cup\{v\}}(G)=P_S(G'')R+(x_v,y_v)
	\end{equation*}
	is clearly of the correct height (to see this using \cref{HeightLemma}, note that $G\setminus\{S\cup\{v\}\}=G''\setminus S$), so we are done.
\end{proof}

\pagebreak

We can now prove our main \lcnamecref{MainBGThm}:

\begin{theorem}\label{MainBGThm}
	For any block graph $G$, we have
	\begin{equation*}
		\MinAtt_R(H_\mathfrak{m}^i(R/\mathcal{J}_G))=\{\mathfrak{p}\in\Ass_R(R/\mathcal{J}_G):\dim(R/\mathfrak{p})=i\}
	\end{equation*}
\end{theorem}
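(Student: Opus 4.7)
My plan is to exploit the equivalent formulation already set up at the start of \cref{MainBGThmProofSection}: via graded Matlis duality (\cref{GradedAssAttDual}) and the height/dimension relation (\cref{GradedCodim}), the desired statement reduces to showing $\MinAss_R(E_R^j(\mathcal{J}_G)) = \Ass_R^j(R/\mathcal{J}_G)$ for each $j \geq 0$ (taking $j = 2n-i$).

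The hard direction ($\subseteq$) is essentially \cref{ExtAssContainmentProp}: every associated prime of $E_R^j(\mathcal{J}_G)$ already contains a height-$j$ associated prime of $R/\mathcal{J}_G$. All that remains is the reverse containment $\supseteq$, together with the observation that such primes are genuinely minimal in $\Ass_R(E_R^j(\mathcal{J}_G))$.

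For the reverse containment I would fix $\mathfrak{q} \in \Ass_R^j(R/\mathcal{J}_G)$ and localize at $\mathfrak{q}$. Since $\mathcal{J}_G$ is radical, $\mathfrak{q}$ is a minimal prime of $\mathcal{J}_G$, so $(R/\mathcal{J}_G)_\mathfrak{q}$ is the residue field $\kappa(\mathfrak{q})$, while $R_\mathfrak{q}$ is a regular local ring of dimension $j$. Because $\Ext$ commutes with localization,
\begin{equation*}
    E_R^j(\mathcal{J}_G)_\mathfrak{q} \cong \Ext^j_{R_\mathfrak{q}}(\kappa(\mathfrak{q}), R_\mathfrak{q}),
\end{equation*}
and the right-hand side is the (nonvanishing) top Ext of the residue field in a regular local ring, which one reads off from the Koszul resolution. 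This forces $\mathfrak{q} \in \Ass_R(E_R^j(\mathcal{J}_G))$. For minimality, any prime $\mathfrak{p} \subsetneq \mathfrak{q}$ cannot contain $\mathcal{J}_G$ (by minimality of $\mathfrak{q}$ over $\mathcal{J}_G$), so $(R/\mathcal{J}_G)_\mathfrak{p} = 0$ and hence $E_R^j(\mathcal{J}_G)_\mathfrak{p} = 0$; thus $\mathfrak{q} \in \MinAss_R(E_R^j(\mathcal{J}_G))$.

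Combining the two directions then finishes the argument: any $\mathfrak{p} \in \MinAss_R(E_R^j(\mathcal{J}_G))$ contains some $\mathfrak{q} \in \Ass_R^j(R/\mathcal{J}_G)$ by \cref{ExtAssContainmentProp}, and the previous step shows such $\mathfrak{q}$ is itself in $\Ass_R(E_R^j(\mathcal{J}_G))$, so the minimality of $\mathfrak{p}$ forces $\mathfrak{p} = \mathfrak{q}$. The only real obstacle is already packaged inside \cref{ExtAssContainmentProp} (whose proof drives the combinatorial induction of \cref{CVPropsSection}); the wrap-up here is formal commutative algebra.
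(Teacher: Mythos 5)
Your proof is correct and shares its backbone with the paper's: both reduce the statement via \cref{GradedAssAttDual} and \cref{GradedCodim} to showing $\MinAss_R(E_R^j(\mathcal{J}_G))=\Ass_R^j(R/\mathcal{J}_G)$, and both rely on \cref{ExtAssContainmentProp} for the hard containment, which is where all the real work lives. The one genuine difference is in the easy containment: the paper simply cites \cite[Algorithm 1.5]{eisenbudDirectMethodsPrimary1992}, which gives $\Ass_R^j(M)=\Ass_R^j(\Ext_R^j(M,R))$ for any finitely generated module $M$, whereas you rederive the needed special case from first principles by localizing at $\mathfrak{q}\in\Ass_R^j(R/\mathcal{J}_G)$, using radicality of $\mathcal{J}_G$ to identify $(R/\mathcal{J}_G)_\mathfrak{q}$ with the residue field $\kappa(\mathfrak{q})$, and reading off the nonvanishing of $\Ext^j_{R_\mathfrak{q}}(\kappa(\mathfrak{q}),R_\mathfrak{q})$ from the Koszul complex over the $j$-dimensional regular local ring $R_\mathfrak{q}$. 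Your localization argument also delivers the minimality of $\mathfrak{q}$ in $\Ass_R(E_R^j(\mathcal{J}_G))$ directly (since $E_R^j(\mathcal{J}_G)_\mathfrak{p}=0$ for every $\mathfrak{p}\subsetneq\mathfrak{q}$), whereas the paper obtains it by combining the cited equality with \cref{ExtAssContainmentProp}. Both routes close the argument correctly; yours is more self-contained, relies on the specific fact that $\mathcal{J}_G$ is radical, and is slightly longer, while the paper's is shorter but outsources the easy step to a black-box citation.
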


\begin{proof}\label{MainBGThmProof}
	By \cite[Algorithm 1.5]{eisenbudDirectMethodsPrimary1992}, for any finitely generated $R$-module $M$, we have
	\begin{equation*}
		\Ass_R^i(M)=\Ass_R^i(\Ext_R^i(M,R))
	\end{equation*}
	Combining this with \cref{ExtAssContainmentProp} completes the proof.
\end{proof}

\begin{corollary}\label{MainBGCorr}
	Let $G$ be a block graph. Then we have that $H_\mathfrak{m}^i(R/\mathcal{J}_G)\neq0$ if and only if $R/\mathcal{J}_G$ has an associated prime of dimension $i$. This can be checked combinatorially via \cref{HeightLemma} (noting \cref{GradedCodim}).
\end{corollary}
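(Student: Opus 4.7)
The plan is to read off the corollary directly from \cref{MainBGThm}. The key observation is that a representable module is non-zero if and only if its set of attached primes (equivalently, its set of minimal attached primes) is non-empty, since a secondary representation has at least one summand and each summand is itself non-zero.

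First I would note that $H_\mathfrak{m}^i(R/\mathcal{J}_G)$ is Artinian (being a local cohomology module of a finitely generated graded module at the irrelevant maximal ideal), and hence representable by \cref{ArtModRep}. Therefore
\begin{equation*}
	H_\mathfrak{m}^i(R/\mathcal{J}_G)\neq 0 \iff \Att_R(H_\mathfrak{m}^i(R/\mathcal{J}_G))\neq\varnothing \iff \MinAtt_R(H_\mathfrak{m}^i(R/\mathcal{J}_G))\neq\varnothing.
\end{equation*}
Applying \cref{MainBGThm}, the last condition is equivalent to the existence of some $\mathfrak{p}\in\Ass_R(R/\mathcal{J}_G)$ with $\dim(R/\mathfrak{p})=i$, which is the claimed equivalence.

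For the combinatorial verification, the associated primes of $R/\mathcal{J}_G$ are precisely the $P_S(G)$ with $S\in\mathcal{C}(G)$ by \cref{BEIPD}. By \cref{HeightLemma} together with \cref{GradedCodim}, we have
\begin{equation*}
	\dim(R/P_S(G))=2n-\hgt_R(P_S(G))=n-\smallAbs{S}+c(S),
\end{equation*}
so the existence of an associated prime of dimension $i$ reduces to the purely combinatorial question of whether there is some $S\in\mathcal{C}(G)$ with $n-\smallAbs{S}+c(S)=i$.

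There is no real obstacle here; the work has already been done in \cref{MainBGThm}. The only small point to verify is the standard fact that local cohomology at the irrelevant ideal is Artinian, so that the secondary-representation machinery applies.
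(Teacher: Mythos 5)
Your proposal is correct and is precisely the argument the paper leaves implicit, since \cref{MainBGCorr} is stated as an immediate consequence of \cref{MainBGThm} without separate proof. You supply the right supporting facts: $H_\mathfrak{m}^i(R/\mathcal{J}_G)$ is Artinian (\cref{GradedLCArt}), hence representable (\cref{ArtModRep}), so it is non-zero exactly when it has a (minimal) attached prime; combining this with \cref{MainBGThm}, \cref{BEIPD}, \cref{HeightLemma}, and \cref{GradedCodim} (with the ring $R$ having $2n$ variables, correctly used in your dimension formula $n-\smallAbs{S}+c(S)$) gives both the equivalence and its combinatorial reformulation.
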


\begin{note}
	\cref{MainBGCorr} can be seen as a generalisation of the main result of \cite[Theorem 1.1]{eneCohenMacaulayBinomialEdge2011} which, by \cref{GradedLCDepthThm}, is equivalent to stating that, for a block graph $G$ on $n$ vertices, we have
	\begin{equation*}
		\min\{i\geq0:H_\mathfrak{m}^i(R/\mathcal{J}_G)\neq0\}=n+c
	\end{equation*}
	where $c$ is the number of connected components of $G$. It is easily seen that $n+c$ is the height of $P_\varnothing(G)$, and that this height is minimal amongst the heights of the associated primes of $R/\mathcal{J}_G$, so applying \cref{MainBGCorr} yields this result.

	Whilst block graphs are not explicitly given as the family of graphs in the statement of \cite[Theorem 1.1]{eneCohenMacaulayBinomialEdge2011}, it can been seen that these families are the same via, for example, \cite[Theorem 2.6]{howorkaMetricPropertiesCertain1979}.
\end{note}

\subsection{An Alternative Proof of the Main Theorem}

We first describe a generalisation of Cohen-Macaulay $R$-modules first introduced by Stanley:

\begin{definition}\thmCite[p.~87, Definition 2.9]{stanleyCombinatoricsCommutativeAlgebra1996}\label{SeqCMDef}
	Let $k$ be a field, and $R$ a non-negatively $\mathbb{Z}$-graded $k$-algebra of finite type such that the homogeneous elements of $R$ of degree $0$ are precisely $k$. We say that a finitely generated $\mathbb{Z}$-graded $R$-module $M$ is \emph{sequentially Cohen-Macaulay} if there exists a filtration of graded $R$-modules
	\begin{equation*}
		0=M_0\subseteq M_1\subseteq\cdots\subseteq M_{t-1}\subseteq M_t=M
	\end{equation*}
	for some $t\geq0$ such that:
	\begin{enumerate}
		\item For each $1\leq i\leq t$, we have that $M_i/M_{i-1}$ is Cohen-Macaulay.
		\item We have
		\begin{equation*}
			\dim_R(M_j/M_{j-1})<\dim_R(M_{j+1}/M_j)
		\end{equation*}
		for each $1\leq j\leq t-1$.
	\end{enumerate}
\end{definition}

An important alternative characterisation of sequentially Cohen-Macaulay $R$-modules was given by Peskine. We state it here in the specific case of a polynomial ring over a field:
\begin{theorem}\label{SeqCMAltChar}
	Let $R=k[x_1,\ldots,x_n]$ for some field $k$ and $n\geq0$ with the standard $\mathbb{Z}$-grading, and let $M$ be a finitely generated $\mathbb{Z}$-graded $R$-module of dimension $d$. Then $M$ is sequentially Cohen-Macaulay if and only if, for all $0\leq i\leq d$, we have that $\Ext_R^{n-i}(M,R)$ is either $0$ or Cohen-Macaulay of dimension $i$.
\end{theorem}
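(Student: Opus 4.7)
My plan is to prove both directions by working with the \emph{dimension filtration} $\{D_i(M)\}_{-1\leq i\leq d}$ of $M$, where $D_i(M)$ is the largest graded $R$-submodule of $M$ of dimension at most $i$ (with $D_{-1}(M)=0$ and $D_d(M)=M$). A standard observation is that if a filtration as in \cref{SeqCMDef} exists, then its nonzero terms must coincide with the nonzero terms of the dimension filtration; so the task reduces to showing that the Ext condition is equivalent to each $D_i(M)/D_{i-1}(M)$ being either zero or Cohen-Macaulay of dimension $i$.

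The bridge between the filtration side and the Ext side is graded local duality (cf.\ the graded analogues discussed around \cref{AssAttDual}): for any finitely generated graded $R$-module $N$ one has $H_\mathfrak{m}^i(N)^\vee\cong\Ext_R^{n-i}(N,R)$ (up to twist), since $R$ is graded Gorenstein of dimension $n$. From this I extract two basic facts. First, $\dim_R\Ext_R^{n-i}(N,R)\leq i$ for all $N$ and all $i$. Second, if $N$ is Cohen-Macaulay of dimension $j$ then $\Ext_R^{n-i}(N,R)=0$ for $i\neq j$ while $\Ext_R^{n-j}(N,R)$ is Cohen-Macaulay of dimension $j$; moreover, the double application $\Ext_R^{n-j}(\Ext_R^{n-j}(N,R),R)$ recovers $N$. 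Both facts are standard consequences of local duality over a graded Gorenstein base.

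For the forward direction, I would induct on the length $t$ of a sequentially Cohen-Macaulay filtration $0=M_0\subset\cdots\subset M_t=M$. Applying $\Hom_R(-,R)$ to $0\to M_{t-1}\to M\to M_t/M_{t-1}\to 0$ yields a long exact sequence of Ext modules; since $M_t/M_{t-1}$ is Cohen-Macaulay of dimension $d$, its only nonvanishing Ext is in homological degree $n-d$. This decouples the long exact sequence into isomorphisms $\Ext_R^{n-i}(M,R)\cong\Ext_R^{n-i}(M_{t-1},R)$ for $i<d$, together with a short exact sequence at the top involving the Cohen-Macaulay module $\Ext_R^{n-d}(M_t/M_{t-1},R)$. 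The inductive hypothesis applied to $M_{t-1}$, together with the second fact above, handles the lower-dimensional pieces, while the top is direct.

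The reverse direction is the main obstacle. Assuming the Ext condition, I would prove by descending induction on $i$ that $\Ext_R^{n-j}(D_i(M),R)\cong\Ext_R^{n-j}(M,R)$ for $j\leq i$ and vanishes for $j>i$, using the short exact sequence $0\to D_i(M)\to M\to M/D_i(M)\to 0$ and the dimension bound $\dim\Ext_R^{n-j}(M/D_i(M),R)\leq j$ to kill the relevant connecting maps. Comparing this with the long exact sequence attached to $0\to D_{i-1}(M)\to D_i(M)\to D_i(M)/D_{i-1}(M)\to 0$ then shows that $D_i(M)/D_{i-1}(M)$ has exactly one nonzero Ext module, namely $\Ext_R^{n-i}(M,R)$, which is Cohen-Macaulay of dimension $i$ by hypothesis. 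Invoking the converse of the second fact above -- that a module whose Ext pattern matches that of a Cohen-Macaulay module of a given dimension is itself Cohen-Macaulay of that dimension -- forces $D_i(M)/D_{i-1}(M)$ to be Cohen-Macaulay of dimension $i$ (or zero), yielding the required sequentially Cohen-Macaulay filtration. The delicate bookkeeping lies in tracking the vanishing of connecting homomorphisms along both families of short exact sequences simultaneously; the uniform dimension bound on Ext is what makes this tractable.
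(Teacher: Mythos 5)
The paper does not prove this statement from scratch: it simply observes that $R$ is Gorenstein, invokes \cref{GradedGorenCanonMod} to identify $R$ as its own graded canonical module, and then cites \cite[Theorem 1.4]{herzogSequentiallyCohenMacaulayBinomial2024}{herzogSequentiallyCohenMacaulayModules2002} (Herzog--Sbarra) for the equivalence. What you have written is, in effect, a sketch of a direct proof of the Herzog--Sbarra/Peskine theorem itself. Your forward direction is essentially sound: the induction on the filtration length, peeling off the top Cohen--Macaulay quotient and using that a Cohen--Macaulay module has a single nonvanishing $\Ext$, is the standard argument and decouples the long exact sequence correctly.

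The reverse direction, however, has a genuine gap. You claim that from $0\to D_i(M)\to M\to M/D_i(M)\to 0$ the ``dimension bound $\dim\Ext_R^{n-j}(M/D_i(M),R)\leq j$'' suffices to kill the connecting maps and produce the isomorphisms $\Ext_R^{n-j}(D_i(M),R)\cong\Ext_R^{n-j}(M,R)$ for $j\leq i$. This is not so: a dimension bound on an $\Ext$ module does not by itself annihilate connecting homomorphisms. Concretely, take $R=k[x,y,z]$ ($n=3$), let $N$ be the ideal $(\bar x,\bar y)\subset R/(z)$ viewed as an $R$-module, and set $M=R/(x,y)\oplus N$. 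Then $D_1(M)=R/(x,y)$, all associated primes of $M/D_1(M)\cong N$ have dimension $2$, and $\dim\Ext_R^2(N,R)=0\leq 1$, yet $\Ext_R^2(M,R)\cong\Ext_R^2(R/(x,y),R)\oplus\Ext_R^2(N,R)\not\cong\Ext_R^2(D_1(M),R)$ because $\Ext_R^2(N,R)\cong k\neq 0$. So the isomorphism you need fails purely from the dimension estimate. (This $M$ violates the hypothesis that $\Ext_R^2(M,R)$ be Cohen--Macaulay, so it is not a counterexample to the theorem, but it shows your stated mechanism is insufficient.) To kill the map $\Ext_R^{n-j}(M/D_i(M),R)\to\Ext_R^{n-j}(M,R)$ you must invoke the hypothesis that $\Ext_R^{n-j}(M,R)$ is Cohen--Macaulay of dimension $j$: then it is unmixed (\cref{GradedCMUnmixed}), and you must separately establish the strict inequality $\dim\Ext_R^{n-j}(M/D_i(M),R)<j$ for $j\leq i$ (which does hold, by localising at height-$(n-j)$ primes and using that $M/D_i(M)$ has no associated primes of dimension $\leq i$), so that the image, being a submodule of an unmixed module of dimension $j$ but itself of dimension $<j$, must vanish. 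Even with that repaired, the surjectivity of $\Ext_R^{n-j}(M,R)\to\Ext_R^{n-j}(D_i(M),R)$ --- equivalently the vanishing of $\delta\colon\Ext_R^{n-j}(D_i(M),R)\to\Ext_R^{n-j+1}(M/D_i(M),R)$ --- does not follow from the same trick, since you control neither the dimension nor the unmixedness of $\Ext_R^{n-j}(D_i(M),R)$ at that stage; this is where the real content of the Herzog--Sbarra/Schenzel argument lies, and your descending induction as set up (the short exact sequence involves $M$ itself rather than $D_{i+1}(M)$, so the inductive hypothesis is not actually consumed) does not supply it. I would either fill this in carefully following Schenzel's analysis of the dimension filtration, or simply cite the theorem as the paper does.
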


\begin{proof}
	This follows from \cite[Theorem 1.4]{herzogSequentiallyCohenMacaulayModules2002} and \cref{GradedGorenCanonMod} since $R$ is Gorenstein.
\end{proof}

Characterising families of graphs which give rise to sequentially Cohen-Macaulay binomial edge ideals is an active area of study. During the writing of this thesis, Lax, Rinaldo, and Romeo showed in \cite{laxSequentiallyCohenMacaulayBinomial2024} that several well-known classes of graphs have sequentially Cohen-Macaulay binomial edge ideals. In particular:
\begin{theorem}\thmCite[Theorem 3.2]{laxSequentiallyCohenMacaulayBinomial2024}\label{BGSeqCM}
	For any block graph $G$, $R/\mathcal{J}_G$ is sequentially Cohen-Macaulay.
\end{theorem}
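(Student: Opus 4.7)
The plan is to argue by a double induction, first on the number of vertices $n$ and then on the number of cut vertices of $G$, using the short exact sequence from \cref{BGESSection}. For the base case, when $G$ has no cut vertex (so $G$ is a disjoint union of cliques), $R/\mathcal{J}_G$ is Cohen-Macaulay, as noted in the proof of \cref{ExtAssContainmentProp}, and any Cohen-Macaulay module is trivially sequentially Cohen-Macaulay.

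For the inductive step, choose any cut vertex $v$ of $G$ and adopt the notation of \cref{BGESSection}. The graph $G'$ has strictly fewer cut vertices than $G$ (completing the neighbourhood of $v$ ensures $v$ is no longer a cut vertex in $G'$), while $G''$ and $H$ both have strictly fewer vertices; crucially, all three remain block graphs. By the inductive hypotheses, each of $R/\mathcal{J}_{G'}$, $R_v/\mathcal{J}_{G''}$ and $R_v/\mathcal{J}_H$ is sequentially Cohen-Macaulay, and this property transfers to $R/Q_1$, $R/Q_2$, $R/Q_3$ as $R$-modules: this can be verified using Peskine's characterisation (\cref{SeqCMAltChar}) together with \cref{ExtAssLemma}, noting a grade shift of two for $Q_2$ and $Q_3$. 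It then remains to propagate sequential Cohen-Macaulayness from $R/Q_1$, $R/Q_2$ and $R/Q_3$ to $R/\mathcal{J}_G$ via the short exact sequence of \cref{BGESSection}.

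To do this I would once more apply Peskine's characterisation and the functor $\Ext_R^*(-, R)$, obtaining the long exact sequence
\begin{equation*}
\cdots \to E_R^j(Q_3) \to E_R^j(Q_1) \oplus E_R^j(Q_2) \to E_R^j(\mathcal{J}_G) \to E_R^{j+1}(Q_3) \to \cdots
\end{equation*}
in which, by induction, each of $E_R^j(Q_1)$, $E_R^j(Q_2)$, $E_R^j(Q_3)$ is zero or Cohen-Macaulay of dimension $2n - j$; the goal is the same statement for $E_R^j(\mathcal{J}_G)$. The hard part will be controlling the connecting homomorphisms: the cleanest route would be to show that each map $E_R^{j+1}(Q_3) \to E_R^{j+1}(Q_1) \oplus E_R^{j+1}(Q_2)$ is injective, so that the long exact sequence degenerates into short exact sequences of the form $0 \to E_R^j(Q_3) \to E_R^j(Q_1) \oplus E_R^j(Q_2) \to E_R^j(\mathcal{J}_G) \to 0$, after which a local cohomology argument (using that the outer two terms are Cohen-Macaulay of the common dimension $2n-j$) would upgrade the quotient to a Cohen-Macaulay module of the required dimension. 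Establishing this injectivity is the main obstacle, and I expect it would require exploiting the Mayer-Vietoris structure (namely $Q_3 = Q_1 + Q_2$ and $\mathcal{J}_G = Q_1 \cap Q_2$) together with the refined separator analysis of \cref{CVPropsSection}, which cleanly partitions the associated primes of $R/\mathcal{J}_G$ according to whether they contain $v$ and whether they contain $N_G(v)$. As an alternative, one might try to construct the dimension filtration of $R/\mathcal{J}_G$ directly in combinatorial terms from $\mathcal{C}(G)$ and verify Cohen-Macaulayness of each successive quotient by induction; but the Ext-based approach seems more natural given the infrastructure already developed in the paper.
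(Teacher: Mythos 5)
Note first that the paper does not prove this theorem — it is cited verbatim from Lax, Rinaldo and Romeo (\cite[Theorem 3.2]{laxSequentiallyCohenMacaulayBinomial2024}); the paper uses it only as an external input to the alternative proof of \cref{MainBGThm}. So you are proposing an original proof, not reconstructing one in the paper, and it needs to be judged on its own merits.

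Your strategy (double induction plus the Ohtani short exact sequence, then pass through Peskine's characterisation \cref{SeqCMAltChar}) is reasonable in outline, and the transfer of sequential Cohen-Macaulayness across the variable change $R_v \to R$ via Rees' isomorphism (underlying \cref{ExtAssLemma}) is fine. However, the final step has two genuine gaps.

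First, the injectivity of the maps $\alpha_j \colon E_R^j(Q_3) \to E_R^j(Q_1) \oplus E_R^j(Q_2)$ is a substantive claim, not a bookkeeping detail. You flag it yourself as the main obstacle, but no argument is offered. The paper's own use of this long exact sequence (in the proof of \cref{ExtAssContainmentProp}) is structured precisely so as \emph{not} to have to assume anything about $\ker(\alpha_{j+1})$ or $\im(\alpha_j)$, which suggests the author did not have such an injectivity statement available.

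Second, and more seriously, even granting injectivity you would have short exact sequences
\begin{equation*}
	0 \to E_R^j(Q_3) \to E_R^j(Q_1) \oplus E_R^j(Q_2) \to E_R^j(\mathcal{J}_G) \to 0
\end{equation*}
with the two outer terms Cohen-Macaulay of dimension $d \defeq 2n - j$, and you claim this forces $E_R^j(\mathcal{J}_G)$ to be Cohen-Macaulay of dimension $d$. This inference is false in general. From the depth lemma one only gets $\depth(E_R^j(\mathcal{J}_G)) \geq d - 1$; equivalently, the local cohomology long exact sequence kills $H^i_\mathfrak{m}$ of the cokernel for $i < d - 1$ but leaves $H^{d-1}_\mathfrak{m}$ as the kernel of $H^d_\mathfrak{m}(E_R^j(Q_3)) \to H^d_\mathfrak{m}(E_R^j(Q_1)\oplus E_R^j(Q_2))$, which need not vanish. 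A concrete counterexample to the general principle: over $R = k[x,y]$, the map $R \to R^2$, $1 \mapsto (x,y)$, is injective with both terms Cohen-Macaulay of dimension $2$, yet the cokernel has projective dimension $1$, hence depth $1$ and dimension $2$, so is not Cohen-Macaulay. You would need a further structural fact particular to this Mayer-Vietoris situation to rule this out, and none is supplied. Until both of these points are resolved, the proposed proof does not go through.
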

As was \hyperref[LRRComment]{mentioned} at the start of this section, this \lcnamecref{BGSeqCM} implies \cref{MainBGThm}. In fact, it implies the following stronger result:

\begin{theorem}\label{BGLCAtt}
	For any block graph $G$, we have
	\begin{equation*}
		\Att_R(H_\mathfrak{m}^i(R/\mathcal{J}_G))=\{\mathfrak{p}\in\Ass_R(R/\mathcal{J}_G):\dim(R/\mathfrak{p})=i\}
	\end{equation*}
\end{theorem}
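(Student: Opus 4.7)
The plan is to upgrade \cref{MainBGThm} from minimal attached primes to all attached primes by leveraging the sequentially Cohen-Macaulay property established by Lax, Rinaldo, and Romeo.

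First I would use \cref{GradedAssAttDual} (together with \cref{GradedCodim}) to translate the claim into an equality of associated primes of $\Ext$, namely
\begin{equation*}
    \Ass_R(E_R^{2n-i}(\mathcal{J}_G)) = \{\mathfrak{p} \in \Ass_R(R/\mathcal{J}_G) : \hgt_R(\mathfrak{p}) = 2n - i\}.
\end{equation*}
Next, since \cref{BGSeqCM} tells us that $R/\mathcal{J}_G$ is sequentially Cohen-Macaulay, I would apply \cref{SeqCMAltChar} to conclude that the $\Ext$ module $E_R^{2n-i}(\mathcal{J}_G)$ is either zero or Cohen-Macaulay of dimension $i$. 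Over the polynomial ring $R$, every Cohen-Macaulay module is unmixed, so all of its associated primes have the same dimension as the module itself. Consequently, there are no embedded primes, which gives the key equality
\begin{equation*}
    \Ass_R(E_R^{2n-i}(\mathcal{J}_G)) = \MinAss_R(E_R^{2n-i}(\mathcal{J}_G)).
\end{equation*}

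At that point, I would invoke \cref{MainBGThm} (rephrased via \cref{GradedAssAttDual} so that minimal attached primes of $H_\mathfrak{m}^i(R/\mathcal{J}_G)$ correspond to minimal associated primes of $E_R^{2n-i}(\mathcal{J}_G)$) to identify this minimal locus with $\{\mathfrak{p} \in \Ass_R(R/\mathcal{J}_G) : \dim(R/\mathfrak{p}) = i\}$. The degenerate case where $E_R^{2n-i}(\mathcal{J}_G) = 0$ is handled automatically: the left-hand set of associated primes is empty, while \cref{MainBGThm} forces the right-hand set to be empty as well.

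There is no real obstacle once \cref{BGSeqCM} and \cref{MainBGThm} are available; the entire argument is a formal unmixedness reduction. The substantive work has already been done in Lax, Rinaldo, and Romeo's sequential Cohen-Macaulayness result, which is precisely what collapses potentially embedded attached primes and forces every attached prime of $H_\mathfrak{m}^i(R/\mathcal{J}_G)$ to be isolated.
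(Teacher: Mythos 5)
Your proof is correct, but it takes a genuinely different route from the paper's alternative proof, and the difference matters for what the argument is supposed to demonstrate. After translating via \cref{GradedAssAttDual} and \cref{GradedCodim} to a statement about $\Ass_R(E_R^{2n-i}(\mathcal{J}_G))$ and using \cref{BGSeqCM} plus \cref{SeqCMAltChar} to conclude that each $E_R^{2n-i}(\mathcal{J}_G)$ is either zero or Cohen--Macaulay (hence unmixed by \cref{GradedCMUnmixed}) --- all of which agrees with the paper --- you then identify the minimal associated primes by invoking \cref{MainBGThm}. The paper instead establishes the standalone \cref{BGThmImplication}, which identifies those primes directly via $\Ass_R^i(\Ext_R^i(M,R))=\Ass_R^i(M)$ from \cite[Algorithm 1.5]{eisenbudDirectMethodsPrimary1992}, with no reference to \cref{MainBGThm}. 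The point of the paper's alternative proof is precisely to show that \cref{BGLCAtt} (and hence \cref{MainBGThm}) follows from the Lax--Rinaldo--Romeo sequential Cohen--Macaulayness result alone, bypassing the lengthy combinatorial induction of \cref{ExtAssContainmentProp}. Your argument is logically sound --- \cref{MainBGThm} is already proved, so using it is permissible --- but it makes the heavier \cref{MainBGThm} a prerequisite for the ostensibly ``short'' proof, and so does not achieve the independence that the paper is after. Replacing your appeal to \cref{MainBGThm} with the direct appeal to \cite[Algorithm 1.5]{eisenbudDirectMethodsPrimary1992} would align the two proofs exactly.
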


We will prove \cref{BGLCAtt} using \cref{BGSeqCM} and the \hyperref[BGThmImplication]{following \lcnamecref{BGThmImplication}}:

\begin{proposition}\label{BGThmImplication}
	Let $R=k[x_1,\ldots,x_n]$ for some field $k$ and $n\geq0$ with the standard $\mathbb{Z}$-grading, and let $M$ be a finitely generated $\mathbb{Z}$-graded $R$-module. Then, if $M$ is sequentially Cohen-Macaulay, we have 
	\begin{equation*}
		\Ass_R(\Ext_R^i(M,R))=\Ass_R^i(M)
	\end{equation*}
\end{proposition}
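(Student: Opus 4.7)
The plan is to combine two ingredients that are already available in the paper: the Peskine-style characterisation of sequentially Cohen-Macaulay modules (Theorem \ref{SeqCMAltChar}) and the Eisenbud--Huneke--Vasconcelos identity $\Ass_R^i(M) = \Ass_R^i(\Ext_R^i(M,R))$ invoked in the proof of Theorem \ref{MainBGThm} (via \cite[Algorithm 1.5]{eisenbudDirectMethodsPrimary1992}). Setting $N \defeq \Ext_R^i(M,R)$, the strategy is to show first that all associated primes of $N$ have height exactly $i$, and then to transfer this back to $M$ via the Eisenbud--Huneke--Vasconcelos identity.

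First I would apply Theorem \ref{SeqCMAltChar}: because $M$ is sequentially Cohen-Macaulay, the module $N$ is either zero or Cohen-Macaulay of dimension $n-i$. In the latter case, every associated prime of $N$ has dimension $n-i$; since $R$ is a polynomial ring (hence Cohen-Macaulay and catenary, so $\hgt_R(\mathfrak{p}) + \dim(R/\mathfrak{p}) = n$ for every prime $\mathfrak{p}$), this forces every associated prime of $N$ to have height $i$. Thus $\Ass_R(N) = \Ass_R^i(N)$.

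Next I would invoke the identity $\Ass_R^i(N) = \Ass_R^i(M)$ from \cite[Algorithm 1.5]{eisenbudDirectMethodsPrimary1992} to conclude that $\Ass_R(\Ext_R^i(M,R)) = \Ass_R^i(M)$, as required. For the boundary case $\Ext_R^i(M,R) = 0$, the left-hand side is empty, and the same identity gives $\Ass_R^i(M) = \Ass_R^i(0) = \varnothing$, so the equality still holds.

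There is no serious obstacle here: the argument is essentially a bookkeeping exercise that bolts together two previously stated results, and the only subtlety is to translate correctly between the dimension-based statement of Theorem \ref{SeqCMAltChar} and the height-based definition of $\Ass_R^i$, and to handle the vanishing case cleanly.
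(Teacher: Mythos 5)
Your proposal is correct and follows essentially the same route as the paper: it applies \cref{SeqCMAltChar} to get that $\Ext_R^i(M,R)$ is zero or Cohen--Macaulay of dimension $n-i$, uses unmixedness of Cohen--Macaulay modules (the paper cites \cref{GradedCMUnmixed} here) plus the codimension formula to conclude every associated prime has height $i$, and then invokes the Eisenbud--Huneke--Vasconcelos identity $\Ass_R^i(\Ext_R^i(M,R))=\Ass_R^i(M)$, handling the vanishing case trivially.
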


\begin{proof}
	As noted in \hyperref[MainBGThmProof]{the proof} of \cref{MainBGThm}, we have
	\begin{equation*}
		\Ass_R^i(\Ext_R^i(M,R))=\Ass_R^i(M)
	\end{equation*}
	by \cite[Algorithm 1.5]{eisenbudDirectMethodsPrimary1992}. Then if $\Ext_R^i(M,R)=0$, we must have $\Ass_R^i(M)=\varnothing$, so the result is trivially true in this case.

	Otherwise, by \cref{SeqCMAltChar} we have that $\Ext_R^i(M,R)$ is Cohen-Macaulay of dimension $n-i$, so every associated prime of $\Ext_R^i(M,R)$ is of height $n-(n-i)=i$ by \cref{GradedCMUnmixed}, and we are done.
\end{proof}

\begin{customProof}[Proof of \cref{BGLCAtt}]\label{MainBGThmAltProof}
	By \cref{BGSeqCM}, we may apply \cref{BGThmImplication} to $R/\mathcal{J}_G$. As we \hyperref[ThmEquivChar]{noted earlier}, we have that
	\begin{equation*}
		\Att_R(H_\mathfrak{m}^i(R/\mathcal{J}_G))=\Ass_R(\Ext_R^{2n-i}(R/\mathcal{J}_G,R))
	\end{equation*}
	and so the result follows.
\end{customProof}

\section{Some Counterexamples}

\cref{MainBGThm} is very far from true in general, as computations in \citeMacaulay{} (taking $k=\mathbb{Z}/2\mathbb{Z}$) show:

When $n=5$ and
\begin{equation*}
    G=\quad\begin{tikzpicture}[x=0.75cm,y=0.75cm,every node/.style={circle,draw=black,fill=black,inner sep=0pt,minimum size=5pt},label distance=0.15cm,line width=0.25mm,baseline={([yshift=-0.5ex]current bounding box.center)}]
        \node(1) at (0,1.5) {};
		\node(2) at (0,0.5) {};
		\node(3) at (1,2) {};
		\node(4) at (1,1) {};
		\node(5) at (1,0) {};

	    \foreach \from/\to in {1/3,1/4,1/5,2/3,2/4,2/5}
        	\draw[-] (\from) -- (\to);
    \end{tikzpicture}
\end{equation*}
we have $H_\mathfrak{m}^5(R/\mathcal{J}_G)\neq0$, but $R/\mathcal{J}_G$ has no associated prime of dimension 5.

\label{EmdAttExample}We can also have embedded attached primes. For example, let $n=8$ and
\begin{equation*}
    G=\quad\begin{tikzpicture}[x=0.75cm,y=0.75cm,every node/.style={circle,draw=black,fill=black,inner sep=0pt,minimum size=5pt},label distance=0.15cm,line width=0.25mm,baseline={([yshift=-0.5ex]current bounding box.center)}]
        \node(1) at (0,0) {};
		\node(2) at ([shift=(0:1)]1) {};
		\node(3) at ([shift=(72:1)]2) {};
		\node(4) at ([shift=(144:1)]3) {};
		\node(5) at ([shift=(216:1)]4) {};

		\node(6) at ([shift=(168:1)]1) {};
		\node(7) at ([shift=(12:1)]2) {};

		\node(8) at ([shift=(90:1)]4) {};

	    \foreach \from/\to in {1/2,1/3,1/4,1/6,2/4,2/5,2/7,3/4,3/5,3/7,4/5,4/8,5/6}
        	\draw[-] (\from) -- (\to);
    \end{tikzpicture}
\end{equation*}
Then \citeMacaulay{} tells us that there are $\mathfrak{p},\mathfrak{q}\in\Spec(R)$ such that
\begin{equation*}
	\Att_R(H_\mathfrak{m}^7(R/\mathcal{J}_G))=\{\mathfrak{p},\mathfrak{q}\}
\end{equation*}
with $\mathfrak{q}\subsetneq\mathfrak{p}$ and $\dim(R/\mathfrak{p})=7$, so $H_\mathfrak{m}^7(R/\mathcal{J}_G)$ has an embedded attached prime, and the minimal attached prime of $H_\mathfrak{m}^7(R/\mathcal{J}_G)$ is not of dimension $7$.

\pagebreak

\appendix

\section*{Appendix - Some Graded Analogues of Local Results}\label{GradedAppendix}
\renewcommand{\thesubsection}{A.\arabic{subsection}}
\setcounter{theorem}{0}
\renewcommand{\thetheorem}{A.\arabic{theorem}}

In this appendix, we set $R=k[x_1,\ldots,x_n]$ for some field $k$ and $n\geq0$, and $\mathfrak{m}=(x_1,\ldots,x_n)$.

Most of the results presented here are well known, however we include them in this appendix for completeness.

Our first goal is to show that local cohomology modules of finitely generated modules supported at homogeneous ideals have secondary representations in which each secondary component is graded and each attached prime is homogeneous.

We begin by noting that several natural constructions involving graded modules are also graded:

\begin{proposition}\thmCite[pp. 116-117]{northcottLessonsRingsModules1968}
	Let $M$ be a graded $R$-module, and $N$ a graded $R$-submodule of $M$. Then $M/N$ is also graded.
\end{proposition}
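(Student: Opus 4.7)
The plan is to equip $M/N$ with a candidate grading by setting $(M/N)_i \defeq M_i / N_i$, where $N_i \defeq N \cap M_i$, and then to verify that this decomposition is direct and respects the $R$-action.

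First I would observe that since $N$ is a graded submodule, its own homogeneous decomposition is forced to be $N = \bigoplus_{i \in \mathbb{Z}} N_i$ with $N_i = N \cap M_i$; this is the content of what it means for a submodule of $M$ to be graded. Next, using the natural surjection $\pi: M \twoheadrightarrow M/N$ and the decomposition $M = \bigoplus_i M_i$, every element of $M/N$ can be written as a finite sum $\sum_i \pi(m_i)$ with $m_i \in M_i$, giving $M/N = \sum_i \pi(M_i)$. Since $\pi(M_i) \cong M_i / (N \cap M_i) = M_i / N_i$ canonically, I have exhibited $M/N$ as a sum of the proposed homogeneous components.

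The only slightly non-trivial step is to check that this sum is direct. Suppose $\sum_i \pi(m_i) = 0$ with $m_i \in M_i$; then $\sum_i m_i \in N$, and the fact that $N$ is itself graded forces each $m_i \in N_i$, whence $\pi(m_i) = 0$. So $M/N = \bigoplus_i (M/N)_i$ as an abelian group. Finally, compatibility with the $R$-action is immediate: if $r \in R_j$ and $\overline{m} \in (M/N)_i$ with $m \in M_i$, then $rm \in M_{i+j}$, so $r \cdot \overline{m} = \overline{rm} \in (M/N)_{i+j}$, as required.

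The main ``obstacle'' (really only a subtlety, since the result is entirely routine) is the directness step: it is essential to use the assumption that $N$ is a \emph{graded} submodule there, not merely a submodule, and the proof would fail without this hypothesis.
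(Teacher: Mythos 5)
Your proof is correct and is exactly the standard textbook argument (the paper simply cites Northcott, pp.\ 116--117, for this fact rather than proving it): define $(M/N)_i$ as the image of $M_i$, use the gradedness of $N$ to get directness, and check $R$-compatibility. Your remark that gradedness of $N$ is essential for the directness step is the right thing to flag as the only non-trivial point.
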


\begin{proposition}\label{GradedExt}
	Let $M$ be a finitely generated graded $R$-module, $N$ a graded $R$-module, $\mathfrak{a}$ a homogeneous ideal of $R$, and $i\geq0$. Then $\Ext_R^i(M,N)$ and $H_\mathfrak{a}^i(M)$ are also graded.
\end{proposition}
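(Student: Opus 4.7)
The plan is to handle the two assertions separately, reducing the second to the first via the standard direct-limit description of local cohomology.

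For $\Ext_R^i(M,N)$, I would first construct a graded free resolution of $M$. Since $M$ is finitely generated and graded over the Noetherian graded ring $R$, one can choose a set of homogeneous generators and iteratively lift; this produces a resolution $F_\bullet\to M$ in which each $F_j$ is a finitely generated free $R$-module that is a direct sum of shifted copies of $R$ (hence graded), and in which all differentials are $R$-linear maps homogeneous of degree $0$. Applying $\Hom_R(-,N)$ to such a resolution yields a cochain complex in which each $\Hom_R(F_j,N)$ is graded (a finite direct sum of shifts of $N$) and each coboundary is a degree-$0$ map. The subquotients defining cohomology (kernel of a graded map modulo image of a graded map, both graded submodules) are therefore graded, so $\Ext_R^i(M,N)$ inherits a natural grading.

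For $H_\mathfrak{a}^i(M)$, I would use the identification
\begin{equation*}
	H_\mathfrak{a}^i(M)=\dirlim_n\Ext_R^i(R/\mathfrak{a}^n,M).
\end{equation*}
Since $\mathfrak{a}$ is homogeneous, each power $\mathfrak{a}^n$ is homogeneous, so each $R/\mathfrak{a}^n$ is a finitely generated graded $R$-module. By the first part, each $\Ext_R^i(R/\mathfrak{a}^n,M)$ is graded. The transition maps in the directed system come from the canonical surjections $R/\mathfrak{a}^{n+1}\twoheadrightarrow R/\mathfrak{a}^n$, which are degree-$0$ maps of graded modules; functoriality of Ext (applied to graded resolutions as above) shows the induced maps on Ext are also degree-$0$. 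A direct limit of graded modules along degree-$0$ transition maps is graded in a canonical way, so $H_\mathfrak{a}^i(M)$ is graded as claimed.

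Neither step presents a real obstacle; the only point that requires any care is verifying that the chosen free resolution can be taken to be graded with degree-zero differentials, so that applying $\Hom_R(-,N)$ and passing to cohomology stays inside the category of graded $R$-modules. Once this is set up, the local cohomology case is formal, provided one knows (or cites) the general fact that a direct limit of graded modules along graded transition maps inherits a grading.
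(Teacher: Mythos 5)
Your argument is correct. The paper itself gives no in-text proof for this proposition; it simply cites Bruns--Herzog (pp.\ 32--33) for the $\Ext$ part and Brodmann--Sharp (Corollary 13.4.3) for the local cohomology part, and what you write is essentially the content of those references: since $M$ is finitely generated and graded over Noetherian $R$, one can build a graded free resolution with finitely generated terms and degree-zero differentials, so $\Hom_R(-,N)$ produces a complex of graded modules with degree-zero coboundaries and hence graded cohomology; the direct-limit description $H_\mathfrak{a}^i(M)=\dirlim_n\Ext_R^i(R/\mathfrak{a}^n,M)$, together with the homogeneity of each $\mathfrak{a}^n$, then reduces the second claim to the first. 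The one spot you glide past is the assertion that the transition maps $\Ext_R^i(R/\mathfrak{a}^n,M)\to\Ext_R^i(R/\mathfrak{a}^{n+1},M)$ are homogeneous of degree zero: the precise point is that the canonical surjections $R/\mathfrak{a}^{n+1}\twoheadrightarrow R/\mathfrak{a}^n$ lift to degree-zero chain maps between the chosen graded free resolutions (a graded comparison theorem), which then induce degree-zero maps on $\Ext$. You gesture at this via ``functoriality of Ext applied to graded resolutions'' without spelling it out, but it is routine and does not constitute a gap.
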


\begin{proof}
	See \cite[pp. 32--33]{brunsCohenMacaulayRings2005} and \cite[Corollary 13.4.3]{brodmannLocalCohomologyAlgebraic2013}.
\end{proof}

Next, we show that such local cohomology modules are Artinian:

\begin{proposition}\label{GradedLCArt}
	Let $M$ be a finitely generated graded $R$-module. Then $H_\mathfrak{m}^i(M)$ is Artinian for any $i\geq0$.
\end{proposition}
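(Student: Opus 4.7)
The plan is to deduce Artinianness from graded local duality. Since $R = k[x_1,\ldots,x_n]$ is a Gorenstein $\mathbb{Z}$-graded $k$-algebra with graded canonical module $R(-n)$, the graded analogue of local duality (see e.g.\ \cite[Theorem 3.6.19]{brunsCohenMacaulayRings2005}) supplies a natural graded isomorphism of the form
$$H_\mathfrak{m}^i(M) \;\cong\; {}^{*}\!\Hom_k\bigl(\Ext_R^{n-i}(M, R),\,k\bigr)(-n).$$

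The argument will then proceed in two short steps. First, since $M$ is finitely generated over the Noetherian ring $R$, the module $\Ext_R^{n-i}(M, R)$ is itself a finitely generated graded $R$-module, hence Noetherian; the grading comes from \cref{GradedExt}. Second, I would invoke graded Matlis duality: over a $\ast$-local Noetherian graded ring with residue field $k$, the graded $k$-dual ${}^{*}\!\Hom_k(-,k)$ carries Noetherian graded modules to Artinian graded modules. Combining these, $H_\mathfrak{m}^i(M)$ is identified, up to a graded shift (which preserves Artinianness), with an Artinian $R$-module, which is the desired conclusion.

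The main obstacle, assuming the appendix has not yet built up graded local duality, is having the graded injective hull ${}^{*}\!E_R(k)$ and its essential properties (in particular its Artinianness) at hand. If a more self-contained route is preferred, my alternative plan is: (i) use the \v{C}ech complex on $x_1,\ldots,x_n$ to show that every graded component $H_\mathfrak{m}^i(M)_d$ is a finite-dimensional $k$-vector space; (ii) show that $H_\mathfrak{m}^i(M)_d = 0$ for $d \gg 0$, a Castelnuovo--Mumford regularity-type statement; (iii) combine (i), (ii), and the $\mathfrak{m}$-torsion property of local cohomology to see that the graded socle $\Hom_R(k, H_\mathfrak{m}^i(M))$ is finite-dimensional over $k$, whence $H_\mathfrak{m}^i(M)$ embeds into a finite direct sum of copies of the Artinian module $E_R(k)$. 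Step (ii), together with the passage from $\ast$-Artinianness to honest Artinianness for $\mathfrak{m}$-torsion graded modules, is the delicate point in this alternative route; it is the one step where the graded and local pictures are not literally identical, and it is the place where I would expect to spend the most care.
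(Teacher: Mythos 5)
Your plan is genuinely different from the paper's. The paper's proof is a short localization argument: given a descending chain $A_0\supseteq A_1\supseteq\cdots$ of submodules of $H_\mathfrak{m}^i(M)$, localize at $\mathfrak{m}$ to get a chain in $H^i_{\mathfrak{m}R_\mathfrak{m}}(M_\mathfrak{m})$, which is Artinian by the usual local theory, so $(A_t/A_{t+j})_\mathfrak{m}=0$ for some $t$ and all $j$; since $H_\mathfrak{m}^i(M)$ is $\mathfrak{m}$-torsion, it (and hence every subquotient such as $A_t/A_{t+j}$) localizes to $0$ at every other maximal ideal, so $A_t/A_{t+j}=0$ by the local-global principle. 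This imports the local result wholesale and needs no graded duality machinery at all. Your route, in either of its two variants, instead builds on graded Matlis duality and ultimately on the Artinianness of the graded injective hull ${}^{*}E_R(k)$. Both are valid over $R=k[x_1,\ldots,x_n]$, but the paper's route is noticeably lighter on prerequisites, and in an appendix whose purpose is to supply graded analogues of local facts it also avoids the appearance of circularity with \cref{GradedAssAttDual}, which is proved later in the same appendix by the same localization device.

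One caution on your first plan: the assertion that the graded $k$-dual carries Noetherian graded modules to Artinian ones needs more than ``$\ast$-local Noetherian graded ring with residue field $k$'' --- what you actually use is that ${}^{*}\Hom_k(N,k)$ of a finitely generated graded module $N$ has finite-dimensional components which vanish in large degree, and that such graded $\mathfrak{m}$-torsion modules are Artinian (not merely $\ast$-Artinian). You flag this correctly as the delicate passage in your second route, but the same issue lives in your first route; as stated the duality step quietly slides from $\ast$-Artinian to Artinian. It is true over the standard-graded polynomial ring (where ${}^{*}E_R(k)$ is the inverse polynomial module and genuinely Artinian), so the gap is closable, but it does need to be closed. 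By contrast, the paper's argument never encounters this distinction.
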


\begin{proof}
	If $H_\mathfrak{m}^i(M)=0$ then the result is trivial, so suppose not. Take any descending chain of $R$-modules
	\begin{equation*}
		H_\mathfrak{m}^i(M)=A_0\supseteq A_1\supseteq\cdots
	\end{equation*}
	Then
	\begin{equation*}
		H_{\mathfrak{m}R_\mathfrak{m}}^i\hspace{-0.05cm}(M_\mathfrak{m})\cong H_\mathfrak{m}^i(M)_\mathfrak{m}=(A_0)_\mathfrak{m}\supseteq(A_1)_\mathfrak{m}\supseteq\cdots
	\end{equation*}
	and so since $H_{\mathfrak{m}R_\mathfrak{m}}^i\hspace{-0.05cm}(M_\mathfrak{m})$ is Artinian there exists some $t\geq0$ such that
	\begin{equation*}
		(A_t/A_{t+j})_\mathfrak{p}\cong(A_t)_\mathfrak{p}/(A_{t+j})_\mathfrak{p}=0
	\end{equation*}
	for all $j\geq0$. Then the result follows since $H_\mathfrak{m}^i(M)_\mathfrak{n}=0$ for any maximal ideal $\mathfrak{n}$ of $R$ which is not equal to $\mathfrak{m}$.
\end{proof}

We will also make use of the following key lemma:

\begin{lemma}\label{ArtGradedSecRep}
	Let $R$ be an Artinian graded $R$-module. Then $M$ has a minimal secondary representation with each secondary component graded and each attached prime homogeneous.
\end{lemma}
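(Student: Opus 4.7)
My plan is to mimic the standard existence argument for secondary representations, restricted throughout to graded submodules of $M$. Call a graded submodule $N \subseteq M$ \emph{graded sum-irreducible} if a decomposition $N = N_1 + N_2$ with $N_1, N_2$ graded submodules of $N$ forces $N = N_1$ or $N = N_2$. Since the poset of graded submodules of $M$ is a subposet of the full submodule lattice, it inherits the descending chain condition from $M$'s Artinian hypothesis. By Noetherian induction on this DCC poset, every nonzero graded submodule of $M$ is a finite sum of graded sum-irreducible graded submodules; in particular, $M = S_1 + \cdots + S_t$ with each $S_i$ of this form.

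Next, I will show each such $S \defeq S_i$ is secondary with a homogeneous attached prime. For homogeneous $r \in R$, the Artinian chain $rS \supseteq r^2 S \supseteq \cdots$ stabilizes at some $r^n S = r^{n+1} S$, which yields $S = r^n S + (0 :_S r^n)$ by a standard Fitting-type calculation. Both summands are graded (as $r^n$ is homogeneous), so graded sum-irreducibility forces one of them to equal $S$; this gives the dichotomy $rS = S$ or $r^n S = 0$ for homogeneous $r$. Setting $\mathfrak{p} \defeq \sqrt{\Ann_R(S)}$, which is homogeneous since $\Ann_R(S)$ is, the dichotomy implies $\mathfrak{p}$ is prime on homogeneous elements and hence prime. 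Since $\mathfrak{p}$ is finitely generated by homogeneous elements each killing $S$ after some power, one has $\mathfrak{p}^N S = 0$ for some $N$, extending $r^m S = 0$ to all $r \in \mathfrak{p}$.

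The main obstacle is extending $rS = S$ to non-homogeneous $r \notin \mathfrak{p}$. I will handle this by considering the finite filtration $S \supseteq \mathfrak{p} S \supseteq \cdots \supseteq \mathfrak{p}^N S = 0$: an inductive snake-lemma-style argument reduces surjectivity of multiplication by $r$ on $S$ to surjectivity on each successive quotient $\mathfrak{p}^i S / \mathfrak{p}^{i+1} S$, which is a graded Artinian module over the graded integral domain $R/\mathfrak{p}$ on which every nonzero homogeneous element of $R/\mathfrak{p}$ already acts surjectively (as inherited from the homogeneous dichotomy on $S$). The residual sub-claim---that on such a module every nonzero (not necessarily homogeneous) element acts surjectively---will be proved by a degree-by-degree preimage construction exploiting that graded Artinian $R$-modules are bounded above in degree, so that only finitely many graded components of the preimage need to be determined.

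Finally, I refine the representation $M = S_1 + \cdots + S_t$ to minimality by combining summands sharing an attached prime (the sum of graded submodules is graded, so graded-ness persists) and discarding any summand contained in the sum of the others. The result is a minimal secondary representation of $M$ with each secondary component graded and each attached prime homogeneous, as required.
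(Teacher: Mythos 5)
The paper does not give a self-contained proof of this lemma: it simply cites Sharp and Richardson, who develop the graded theory of secondary representations. Your proposal, by contrast, gives a direct graded adaptation of the classical Kirby--Macdonald existence argument (the ungraded version of which is exactly how Macdonald proves that Artinian modules are representable). The overall structure is correct: decompose $M$ into graded sum-irreducible graded submodules using DCC on the poset of graded submodules; use the graded Fitting lemma to get the secondary dichotomy for homogeneous $r$; conclude that $\mathfrak{p}=\sqrt{\Ann_R S}$ is a homogeneous prime (the dichotomy gives primeness on homogeneous elements, and a proper homogeneous ideal that is homogeneously prime is prime); deduce $\mathfrak{p}^N S=0$ from Noetherianness of $R$; and then extend the dichotomy to non-homogeneous $r\notin\mathfrak{p}$ via the $\mathfrak{p}$-adic filtration. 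This is a genuine, more elementary alternative to the paper's route of deferring to the literature.

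The one step that should be spelled out with more care is the ``degree-by-degree preimage construction'' at the end, because the naive top-down recursion does not immediately close: the equation in each degree involves several as-yet-undetermined components of the preimage at once. What does work cleanly is the following. Let $\overline{r}\in R/\mathfrak{p}$ be nonzero, let $\overline{r_{d_j}}$ be its lowest nonzero homogeneous component, and let $Q=\mathfrak{p}^iS/\mathfrak{p}^{i+1}S$. Since $r_{d_j}\notin\mathfrak{p}$, multiplication by $r_{d_j}$ is surjective on $S$, hence on $\mathfrak{p}^iS$ and on $Q$; being a surjective endomorphism of an Artinian module, it is bijective. Write $\overline{r}=\overline{r_{d_j}}(1+\nu)$ where $\nu=\overline{r_{d_j}}^{-1}(\overline{r}-\overline{r_{d_j}})$ is an $R$-linear endomorphism of $Q$ that strictly raises degree. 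Because $Q$ is bounded above in degree, $\sum_{k\ge0}(-\nu)^k$ is a finite sum on any element, so $1+\nu$ is bijective, and hence so is $\overline{r}$. (This is what your ``finitely many graded components'' intuition is really capturing: $Q$ is \emph{not} bounded below, but the successive error terms have strictly increasing minimum degree and are capped above, so the iteration terminates.) Note also that the degree-zero case is degenerate but harmless: if $r$ has nonzero constant term $r_0\in k$, then $r_0\notin\mathfrak{p}$ is already a unit and one can apply the geometric-series argument directly on $S$. With these clarifications, the proposal is a correct and self-contained proof of the lemma; it even works over any Noetherian positively graded ring with $R_0$ a field, slightly more generally than the appendix's polynomial-ring setting.
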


\begin{proof}
	This follows from \cite[Proposition 2.4]{sharpAsymptoticBehaviourCertain1986} and \cite[Corollary 1.6]{richardsonAttachedPrimesGraded2003}.
\end{proof}

Our first claim is then immediate:

\begin{corollary}\label{GradedSecRep}
	Let $M$ be a finitely generated graded $R$-module. Then for any $i\geq0$, $H_\mathfrak{m}^i(M)$ has a minimal secondary representation with each secondary component graded and each attached prime homogeneous.
\end{corollary}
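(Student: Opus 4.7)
The plan is essentially to combine the three preceding results in the appendix, so this will be a very short proof. First I would observe that, since $\mathfrak{m} = (x_1,\ldots,x_n)$ is a homogeneous ideal of $R$ and $M$ is graded, \cref{GradedExt} applies to give that $H_\mathfrak{m}^i(M)$ is a graded $R$-module for every $i \geq 0$.

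Next I would invoke \cref{GradedLCArt} to conclude that $H_\mathfrak{m}^i(M)$ is also Artinian. At this point $H_\mathfrak{m}^i(M)$ satisfies both hypotheses of \cref{ArtGradedSecRep} (namely, it is an Artinian graded $R$-module), so applying that lemma directly yields a minimal secondary representation of $H_\mathfrak{m}^i(M)$ in which every secondary component is graded and every attached prime is homogeneous, which is exactly what we want.

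There is really no obstacle here: the work has already been done in establishing that local cohomology modules at homogeneous ideals inherit gradedness (\cref{GradedExt}), that local cohomology at $\mathfrak{m}$ of finitely generated modules is Artinian (\cref{GradedLCArt}), and that Artinian graded modules admit such secondary representations (\cref{ArtGradedSecRep}). The only care needed is to verify explicitly that the hypotheses of \cref{ArtGradedSecRep} are met, which follows immediately from the previous two results. So the proof is a one-line assembly.
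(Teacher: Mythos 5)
Your proof is correct and follows the same route as the paper: the paper's proof cites \cref{GradedLCArt} and \cref{ArtGradedSecRep}, leaving the gradedness of $H_\mathfrak{m}^i(M)$ implicit, whereas you also explicitly invoke \cref{GradedExt} to justify it. That is a reasonable bit of extra care, but the argument is the same.
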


\begin{proof}
	This follows from \cref{GradedLCArt} and \cref{ArtGradedSecRep}.
\end{proof}

Next, we aim to prove an equality between the attached primes of these local cohomology modules and the associated primes of certain $\Ext$ modules.

We will first examine the effect of localisation on secondary representations.

\begin{note}
	\cref{SecondaryLoc} does not make any use of grading.
\end{note}

\pagebreak

\begin{proposition}\label{SecondaryLoc}
	Let $M$ be a $\mathfrak{q}$-secondary $R$-module for some $\mathfrak{p}\in\Spec(R)$, and let $\mathfrak{p}\in\Spec(R)$ with $\mathfrak{q}\subseteq\mathfrak{p}$. Then $M_\mathfrak{p}$ is either $0$, or $\mathfrak{q}R_\mathfrak{p}$-secondary as an $R_\mathfrak{p}$-module.
\end{proposition}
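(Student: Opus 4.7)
The plan is to verify directly from the definition that $M_\mathfrak{p}$, when nonzero, satisfies both the ``nilpotent or surjective'' condition and has radical annihilator equal to $\mathfrak{q}R_\mathfrak{p}$.

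First I would handle the denominators. Any element of $R_\mathfrak{p}$ has the form $r/s$ with $s \in R \setminus \mathfrak{p}$. Since $\mathfrak{q} \subseteq \mathfrak{p}$, we have $s \notin \mathfrak{q} = \sqrt{\Ann_R(M)}$, so no power of $s$ annihilates $M$. The secondary property of $M$ then forces $sM = M$. A bit more work (using that $s^m M = M$ for all $m$ by iteration, so $s$ has no annihilator on any element without also annihilating the whole module — in fact I would note that since $sM = M$ and $s$ is not in $\sqrt{\Ann_R(M)}$, multiplication by $s$ is a bijection on $M$) shows $s$ acts invertibly on $M$ and hence on $M_\mathfrak{p}$. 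In particular, studying the action of $r/s$ on $M_\mathfrak{p}$ reduces to studying the action of $r$.

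Next I would split into two cases on $r \in R$. If $r \notin \mathfrak{q}$, then $rM = M$ by the secondary property, so $r M_\mathfrak{p} = M_\mathfrak{p}$, hence $(r/s) M_\mathfrak{p} = M_\mathfrak{p}$. If $r \in \mathfrak{q}$, then $r^m M = 0$ for some $m \geq 1$, so $r^m M_\mathfrak{p} = 0$ and therefore $(r/s)^m M_\mathfrak{p} = 0$. This proves that every element of $R_\mathfrak{p}$ is either surjective or nilpotent on $M_\mathfrak{p}$, establishing that $M_\mathfrak{p}$ is secondary whenever it is nonzero.

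Finally I would identify the secondary prime. The two cases above are mutually exclusive when $M_\mathfrak{p} \neq 0$: if $(r/s)$ were both surjective and nilpotent, then $M_\mathfrak{p} = (r/s) M_\mathfrak{p} = (r/s)^2 M_\mathfrak{p} = \cdots = 0$. So $r/s$ acts nilpotently on $M_\mathfrak{p}$ if and only if $r \in \mathfrak{q}$, and since $s$ is a unit in $R_\mathfrak{p}$, this happens if and only if $r/s \in \mathfrak{q}R_\mathfrak{p}$. Thus $\sqrt{\Ann_{R_\mathfrak{p}}(M_\mathfrak{p})} = \mathfrak{q}R_\mathfrak{p}$, and $M_\mathfrak{p}$ is $\mathfrak{q}R_\mathfrak{p}$-secondary.

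The argument is quite mechanical; the only mild subtlety is not falling into the trap of assuming $M$ is finitely generated (so one cannot naively invoke $\Ann_R(M) R_\mathfrak{p} = \Ann_{R_\mathfrak{p}}(M_\mathfrak{p})$), which is why the identification of the secondary prime is carried out via the surjective/nilpotent dichotomy rather than by directly manipulating annihilators.
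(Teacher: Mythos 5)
Your argument is correct in outline and takes the same route as the paper's: verify directly from the definition that elements of $\mathfrak{q}R_\mathfrak{p}$ act nilpotently on $M_\mathfrak{p}$ and that elements outside it act surjectively. Your closing remark about not naively pushing annihilators through localisation is a good one.

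There is, however, one false intermediate claim. You assert that because $s \notin \sqrt{\Ann_R(M)}$, multiplication by $s$ is a bijection on $M$ itself. Surjectivity is indeed automatic from the secondary condition, but injectivity can fail: with $R = \mathbb{Z}$ and $M = \mathbb{Z}_{p^\infty}$, which is $(0)$-secondary, multiplication by $p$ is surjective but annihilates $\frac{1}{p} + \mathbb{Z} \neq 0$. (You may be thinking of the fact that a surjective endomorphism of a Noetherian module is bijective; the dual statement for Artinian modules concerns injective endomorphisms, and secondary modules tend to live on the Artinian side.) Fortunately you never actually need $s$ to act invertibly on $M$ --- the conclusion you use, that $s$ acts invertibly on $M_\mathfrak{p}$, is immediate for the trivial reason that $s \notin \mathfrak{p}$ makes $s$ a unit in $R_\mathfrak{p}$. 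Excise the bijection claim and substitute that observation and the proof stands; it then lines up with the paper's, which skips the reduction entirely and works directly with a general fraction $\frac{r}{s} \in R_\mathfrak{p} \setminus \mathfrak{q}R_\mathfrak{p}$, lifting $\frac{m}{s'}$ through the surjectivity of multiplication by $s'r$ on $M$.
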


\begin{proof}
	Suppose that $M\neq0$, and take any $\frac{q}{s}\in\mathfrak{q}R_\mathfrak{p}$. Since $M$ is $\mathfrak{q}$-secondary, there exists some $l\geq1$ such that $q^l$ annihilates $M$, so certainly $(\frac{q}{s})^l$ annihilates $M_\mathfrak{p}$.

	Next, take any $\frac{r}{s}\in R_\mathfrak{p}\setminus\mathfrak{q}R_\mathfrak{p}$, and any $\frac{m}{s'}\in M_\mathfrak{p}$. Note that $s',r\notin\mathfrak{q}$, so $s'r\notin\mathfrak{q}$. Then, again since $M$ is $\mathfrak{q}$-secondary, there exists some $m'\in M$ such that $s'rm'=sm$, and so $\frac{r}{s}\cdot\frac{m'}{1}=\frac{m}{s'}$. This shows that $\frac{r}{s}M_\mathfrak{p}=M_\mathfrak{p}$, so we are done.
\end{proof}

In many cases, these localisations will be $0$. For example, any prime in the support of an Artinian $R$-module must be maximal (see, for example, \cite[Lemma 3.1.11 (1)]{leamerHomologyArtinianModules2011}). However, we can say \hyperref[AttLoc]{something useful} in the graded case. In doing so, we will make use of the following lemma:

\begin{lemma}\label{GradedLocFF}
	For any graded $R$-module $M$, we have $M_\mathfrak{m}=0$ if and only if $M=0$.
\end{lemma}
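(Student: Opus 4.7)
The plan is to prove the non-trivial direction by contrapositive: assuming $M \neq 0$, produce a homogeneous element of $M$ which remains nonzero after localisation at $\mathfrak{m}$.

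First I would pick a nonzero homogeneous element $m \in M$ and examine its annihilator $\mathfrak{a} \defeq \Ann_R(m)$. The key observation is that $\mathfrak{a}$ is a homogeneous ideal: if $r = r_0 + r_1 + \cdots + r_t$ is the decomposition of $r \in \mathfrak{a}$ into homogeneous components, then $rm = \sum_i r_i m$ is a sum of homogeneous elements of distinct degrees (since $m$ is homogeneous), and so each $r_i m = 0$ individually, placing each $r_i$ in $\mathfrak{a}$.

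Next I would show that every proper homogeneous ideal of $R$ is contained in $\mathfrak{m}$. This is immediate from the structure of $R = k[x_1, \ldots, x_n]$ with the standard grading: a homogeneous element not in $\mathfrak{m}$ must have degree $0$ (everything of positive degree lies in $\mathfrak{m}$), hence lies in $k \setminus \{0\}$ and is a unit. So a homogeneous ideal containing any element outside $\mathfrak{m}$ is all of $R$. Applying this to $\mathfrak{a}$, which is proper since $m \neq 0$, gives $\mathfrak{a} \subseteq \mathfrak{m}$.

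The conclusion is then a quick calculation: if $m/1 = 0$ in $M_\mathfrak{m}$, there would exist $s \in R \setminus \mathfrak{m}$ with $sm = 0$, i.e.\ $s \in \mathfrak{a} \subseteq \mathfrak{m}$, a contradiction. Hence $m/1 \neq 0$, so $M_\mathfrak{m} \neq 0$. The reverse implication is trivial since localisation of the zero module is zero. There is no real obstacle here; the crux is just the interplay between the homogeneity of $\Ann_R(m)$ for homogeneous $m$ and the fact that $\mathfrak{m}$ is the unique graded maximal ideal of $R$.
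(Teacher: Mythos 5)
Your proof is correct. The paper simply cites \cite[Proposition 1.5.15(c)]{brunsCohenMacaulayRings2005} for this fact, whereas you give a self-contained elementary argument: pick a nonzero homogeneous $m$, observe that $\Ann_R(m)$ is homogeneous (the graded-component argument you give is the standard one and is sound), note that every proper homogeneous ideal of $R=k[x_1,\ldots,x_n]$ lies in $\mathfrak{m}$, and conclude $m/1\neq 0$ in $M_\mathfrak{m}$. One small point in your favour: your argument manifestly works for \emph{arbitrary} graded $R$-modules with no finiteness hypothesis, which matters here because the paper later applies this lemma (in the proof of \cref{AttLoc}) to secondary components of an Artinian module, which need not be finitely generated; spelling out the elementary proof makes it transparent that no such hypothesis is being smuggled in. Otherwise the content is the same as what underlies the cited reference.
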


\begin{proof}
	This follows from \cite[Proposition 1.5.15~(c)]{brunsCohenMacaulayRings2005}.
\end{proof}

\begin{proposition}\label{AttLoc}
	Let be $M$ a representable $R$-module. Then
	\begin{equation*}
		\Att_{R_\mathfrak{m}}\hspace{-0.05cm}(M_\mathfrak{m})=\{\mathfrak{q}R_\mathfrak{m}:\mathfrak{q}\in\Att_R(M)\}
	\end{equation*}
\end{proposition}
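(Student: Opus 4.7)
The plan is to lift a nicely behaved minimal secondary representation of $M$ through the localization map. Implicitly (and consistent with the setting of the appendix), we may take $M$ to be a graded representable $R$-module admitting a minimal secondary representation $M = S_1 + \cdots + S_t$ in which each $S_i$ is a graded $\mathfrak{q}_i$-secondary submodule and each $\mathfrak{q}_i$ is homogeneous (this is automatic when $M$ is Artinian and graded by \cref{ArtGradedSecRep}, which is the case we apply this to later). Since $\mathfrak{m}$ is the unique homogeneous maximal ideal of $R$, every homogeneous prime, in particular every $\mathfrak{q}_i$, is contained in $\mathfrak{m}$.

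First I would apply $(-)_\mathfrak{m}$ to both sides of $M = S_1 + \cdots + S_t$, using exactness of localization to obtain $M_\mathfrak{m} = (S_1)_\mathfrak{m} + \cdots + (S_t)_\mathfrak{m}$. Next, for each $i$, I would invoke \cref{SecondaryLoc} to conclude that $(S_i)_\mathfrak{m}$ is either $0$ or $\mathfrak{q}_iR_\mathfrak{m}$-secondary. To rule out the zero case, note that each $S_i$ is nonzero (secondary modules are nonzero by definition) and graded, so \cref{GradedLocFF} forces $(S_i)_\mathfrak{m} \neq 0$, hence $(S_i)_\mathfrak{m}$ is $\mathfrak{q}_iR_\mathfrak{m}$-secondary.

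The substantive step, and the one I expect to be the main obstacle, is verifying that the localized representation is still minimal. Distinctness of the primes $\mathfrak{q}_iR_\mathfrak{m}$ follows from the standard bijection between primes of $R$ contained in $\mathfrak{m}$ and primes of $R_\mathfrak{m}$, combined with the distinctness of the $\mathfrak{q}_i$. For irredundancy, consider the quotient $S_i / (S_i \cap \sum_{j \neq i} S_j)$, which is nonzero by minimality of the original representation. Since sums and intersections of graded submodules are graded, this quotient is graded; hence by \cref{GradedLocFF} its localization at $\mathfrak{m}$ is nonzero. Since localization is exact and commutes with finite sums and intersections of submodules of a common ambient module, this yields $(S_i)_\mathfrak{m} \not\subseteq \sum_{j \neq i} (S_j)_\mathfrak{m}$.

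Having shown that $M_\mathfrak{m} = (S_1)_\mathfrak{m} + \cdots + (S_t)_\mathfrak{m}$ is a minimal secondary representation of $M_\mathfrak{m}$ as an $R_\mathfrak{m}$-module with attached primes $\mathfrak{q}_iR_\mathfrak{m}$, the desired equality $\Att_{R_\mathfrak{m}}(M_\mathfrak{m}) = \{\mathfrak{q}R_\mathfrak{m} : \mathfrak{q} \in \Att_R(M)\}$ follows directly from the uniqueness of attached primes.
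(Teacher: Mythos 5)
Your proposal matches the paper's proof: both take the graded minimal secondary representation from \cref{ArtGradedSecRep} with homogeneous attached primes (hence contained in $\mathfrak{m}$), localize, and apply \cref{SecondaryLoc} together with \cref{GradedLocFF} to identify the localized components as nonzero $\mathfrak{q}_iR_\mathfrak{m}$-secondary modules. You go slightly further by spelling out the verification that the localized representation remains minimal (distinctness of primes and irredundancy via a graded quotient and \cref{GradedLocFF}), a step the paper leaves implicit but which uses exactly the same tools.
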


\begin{proof}
	By \cref{ArtGradedSecRep}, $M$ has minimal a secondary representation
	\begin{equation*}
		M=A_1+\cdots+A_t
	\end{equation*}
	for some $t\geq1$, with each $A_i$ graded and $\mathfrak{q}_i$-secondary for some homogeneous $\mathfrak{q}_i\in\Spec(R)$. In particular, $\mathfrak{q}_i\subseteq\mathfrak{m}$. Then
	\begin{equation*}
		M_\mathfrak{m}=(A_1+\cdots+A_t)_\mathfrak{m}=(A_1)_\mathfrak{m}+\cdots+(A_t)_\mathfrak{m}
	\end{equation*}
	and we are done by \cref{GradedLocFF} and \cref{SecondaryLoc}.
\end{proof}

A key tool used in the remainder of this appendix is the following result on dimension:

\begin{proposition}\thmCite[Exercise 1.4.14]{brodmannLocalCohomologyAlgebraic2013}\label{GradedDim}
	Let $M$ be a graded $R$-module. Then
	\begin{equation*}
		\dim_R(M)=\dim_{R_\mathfrak{m}}\hspace{-0.05cm}(M_\mathfrak{m})
	\end{equation*}
\end{proposition}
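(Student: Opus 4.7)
The plan is to reduce the statement to a purely prime-theoretic claim, via the standard observation that the annihilator of a graded module is a homogeneous ideal. First I would verify this observation: if $r = r_{d_1} + \cdots + r_{d_k}$ (with $d_1 < \cdots < d_k$) annihilates $M$, then for any homogeneous $m \in M$ of degree $e$, the equation $rm = 0$ decomposes into homogeneous components $r_{d_j}m$ of distinct degrees $e + d_j$, forcing each $r_{d_j} m = 0$. Since every element of $M$ is a finite sum of homogeneous ones, each $r_{d_j} \in \Ann_R(M)$, so $\Ann_R(M)$ is homogeneous. Consequently every minimal prime of $\Ann_R(M)$ is a homogeneous prime of $R$, and since $\mathfrak{m} = (x_1, \ldots, x_n)$ is the unique homogeneous maximal ideal of $R = k[x_1, \ldots, x_n]$, each such minimal prime is contained in $\mathfrak{m}$.

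The second step is to show that for any homogeneous prime $\mathfrak{p} \subseteq \mathfrak{m}$ we have $\dim R/\mathfrak{p} = \dim R_\mathfrak{m}/\mathfrak{p} R_\mathfrak{m}$. This is where equicodimensionality of $R$ comes in: since $R$ is a polynomial ring over a field, every maximal chain of primes has length $n$, so $\dim R/\mathfrak{p} = n - \hgt(\mathfrak{p})$. On the local side, $R_\mathfrak{m}$ is a Cohen--Macaulay (in particular catenary) local ring of dimension $n$, giving $\dim R_\mathfrak{m}/\mathfrak{p} R_\mathfrak{m} = \dim R_\mathfrak{m} - \hgt(\mathfrak{p} R_\mathfrak{m}) = n - \hgt(\mathfrak{p})$, so the two dimensions agree.

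Combining these, one can compute
\begin{equation*}
    \dim_R(M) = \max_{\mathfrak{p}} \dim R/\mathfrak{p} = \max_{\mathfrak{p}} \dim R_\mathfrak{m}/\mathfrak{p} R_\mathfrak{m} = \dim_{R_\mathfrak{m}}(M_\mathfrak{m}),
\end{equation*}
where $\mathfrak{p}$ ranges over the minimal primes of $\Ann_R(M)$ (all homogeneous, hence all contained in $\mathfrak{m}$), which under the bijection between primes of $R_\mathfrak{m}$ and primes of $R$ contained in $\mathfrak{m}$ correspond exactly to the minimal primes of $\Ann_{R_\mathfrak{m}}(M_\mathfrak{m})$. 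The main obstacle is this last correspondence when $M$ is not finitely generated, since then $\Ann_{R_\mathfrak{m}}(M_\mathfrak{m})$ need not equal $\Ann_R(M)R_\mathfrak{m}$; I would circumvent this by working with the characterisation $\dim M = \sup\{\dim R/\mathfrak{p} : \mathfrak{p} \in \Supp_R M\}$ and using that, for homogeneous $\mathfrak{p} \subseteq \mathfrak{m}$, we have $M_\mathfrak{p} \cong (M_\mathfrak{m})_{\mathfrak{p} R_\mathfrak{m}}$, so that the relevant supports agree after restricting to homogeneous primes, which is all that matters by the first paragraph.
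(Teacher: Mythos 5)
The paper offers no proof of this proposition at all---it is cited directly as \cite[Exercise 1.4.14]{brodmannLocalCohomologyAlgebraic2013}---so there is nothing to compare against; I can only assess the argument on its own terms. Your overall strategy is sound, and for finitely generated $M$ (which is the only case the paper ever invokes) the argument is essentially complete: $\Ann_R(M)$ is homogeneous, so its minimal primes are homogeneous and hence contained in $\mathfrak{m}$, and then $\dim R/\mathfrak{p} = n - \hgt(\mathfrak{p}) = n - \hgt(\mathfrak{p}R_\mathfrak{m}) = \dim R_\mathfrak{m}/\mathfrak{p}R_\mathfrak{m}$ (a fact which, as you note for homogeneous primes, actually holds for every prime $\mathfrak{p} \subseteq \mathfrak{m}$ since $R$ is a polynomial ring over a field, and so $\hgt(\mathfrak{p}) + \dim(R/\mathfrak{p}) = n$ holds unconditionally). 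That gives both inclusions at the level of supports, which are equal to $V(\Ann)$ in the finitely generated case.

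Where the argument has a genuine gap is exactly the place you flag and then wave off: the reduction to homogeneous primes for $M$ not finitely generated. The claim ``which is all that matters by the first paragraph'' does not hold up, because the first paragraph concerns $\Ann_R(M)$ and hence $V(\Ann_R(M))$, which can strictly contain $\Supp_R(M)$ when $M$ is not finitely generated. The inequality $\dim_{R_\mathfrak{m}}(M_\mathfrak{m}) \leq \dim_R(M)$ is unproblematic (every $\mathfrak{p} \subseteq \mathfrak{m}$ in $\Supp_R(M)$ already contributes to $\dim_R(M)$, and your step 2 applies to all such $\mathfrak{p}$), but the reverse inequality needs the separate graded fact that if $\mathfrak{p} \in \Supp_R(M)$ then $\mathfrak{p}^\ast \in \Supp_R(M)$, where $\mathfrak{p}^\ast$ is the (homogeneous, hence contained in $\mathfrak{m}$) prime generated by the homogeneous elements of $\mathfrak{p}$. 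This can be proved directly: take $m \in M$ with $\Ann_R(m) \subseteq \mathfrak{p}$, write $m = m_1 + \cdots + m_k$ with $m_i$ homogeneous, note that $\bigcap_i \Ann_R(m_i) \subseteq \Ann_R(m) \subseteq \mathfrak{p}$, so some $\Ann_R(m_j) \subseteq \mathfrak{p}$ by primeness; since $\Ann_R(m_j)$ is a homogeneous ideal, it is in fact contained in $\mathfrak{p}^\ast$, whence $M_{\mathfrak{p}^\ast} \neq 0$. With this lemma supplied in place of the appeal to the first paragraph, your circumvention closes up and the proof goes through for arbitrary graded $M$.
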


We will also need the following well-known result on the associated primes of graded modules:

\begin{proposition}\thmCite[Lemma 1.5.6(b)]{brunsCohenMacaulayRings2005}\label{HomogAssPrime}
	Let $M$ be a graded $R$-module. Then each $\mathfrak{p}\in\Ass_R(M)$ is homogeneous.
\end{proposition}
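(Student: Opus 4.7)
The plan is to show that for any $\mathfrak{p}\in\Ass_R(M)$, written as $\mathfrak{p}=\Ann_R(m)$ for some $m\in M$, the ideal $\mathfrak{p}$ is closed under taking homogeneous components. I would decompose $m=m_{d_1}+\cdots+m_{d_s}$ into its nonzero homogeneous components with $d_1<\cdots<d_s$, and write an arbitrary $r\in\mathfrak{p}$ as $r=r_{e_1}+\cdots+r_{e_t}$ with $e_1<\cdots<e_t$. The goal is to show each $r_{e_j}\in\mathfrak{p}$.

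First I would dispatch the base case $s=1$: when $m$ is itself homogeneous of degree $d_1$, the relation $rm=\sum_j r_{e_j}m=0$ is a sum of elements of pairwise distinct degrees $e_j+d_1$, which forces each $r_{e_j}m=0$ and hence $r_{e_j}\in\mathfrak{p}$.

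For general $s$, I would introduce the ideal $\mathfrak{p}^*$ generated by the homogeneous elements of $\mathfrak{p}$. By construction $\mathfrak{p}^*$ is homogeneous, contained in $\mathfrak{p}$, and prime: if $a,b\in R$ are homogeneous with $ab\in\mathfrak{p}^*\subseteq\mathfrak{p}$, then one of $a,b$ lies in $\mathfrak{p}$ and, being homogeneous, in $\mathfrak{p}^*$, which suffices since homogeneous ideals may be tested for primality on homogeneous products. The remaining task is then to show $\mathfrak{p}=\mathfrak{p}^*$.

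The main obstacle is this final step. I would proceed by inducting on $t$ and extracting the top-degree component of $rm=0$: since any $r_{e_i}m_{d_j}$ with $(i,j)\ne(t,s)$ has degree strictly less than $e_t+d_s$, the only contribution in that top degree is $r_{e_t}m_{d_s}$, so $r_{e_t}m_{d_s}=0$. Iterating with $r^Nm=0$ gives $r_{e_t}^Nm_{d_s}=0$ for all $N$, and then a descent on the number $s$ of homogeneous components of $m$, combined with primality of $\mathfrak{p}$, should promote this to $r_{e_t}^Nm=0$ for some $N$, forcing $r_{e_t}\in\mathfrak{p}$. Once $r_{e_t}\in\mathfrak{p}$ is established, the element $r-r_{e_t}$ lies in $\mathfrak{p}$ with $t-1$ homogeneous components, and the induction on $t$ closes out the argument, so every homogeneous component of $r$ lies in $\mathfrak{p}$, i.e.\ $\mathfrak{p}$ is homogeneous.
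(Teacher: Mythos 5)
The paper does not prove this statement itself --- it cites it from Bruns--Herzog --- so there is no in-paper argument to compare against; what follows is an assessment of your from-scratch proof. Your overall framework (decompose $m$ and $r$ into homogeneous components, induct, and use primality of $\mathfrak{p}$ at the end to eliminate powers) is the standard one, and your base case $s=1$ is correct. But the central step has a genuine gap. Extracting the top-degree component of $rm=0$ indeed gives $r_{e_t}m_{d_s}=0$; however, ``iterating with $r^Nm=0$'' accomplishes nothing, because the top-degree component of $r^Nm$ is $r_{e_t}^Nm_{d_s}$, so all you recover is $r_{e_t}^Nm_{d_s}=0$, which already follows trivially from $r_{e_t}m_{d_s}=0$. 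The statement you actually need --- that $r_{e_t}^Nm=0$ for some $N$ --- is left at the level of ``a descent on $s$ \ldots should promote this,'' with no mechanism supplied, and that is precisely where the work lies.

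The descent can be made to work, but not in the form you describe. The clean formulation is: by induction on $s$, if $m$ has at most $s$ nonzero homogeneous components and $rm=0$, then $r_{e_t}^{\,s}m=0$. The base case $s=1$ is yours. For $s>1$, the top-degree component of $rm=0$ gives $r_{e_t}m_{d_s}=0$, so the element $r_{e_t}m$ has at most $s-1$ nonzero homogeneous components; moreover $r\cdot(r_{e_t}m)=r_{e_t}\cdot(rm)=0$, so the inductive hypothesis applied to $r_{e_t}m$ gives $r_{e_t}^{\,s-1}(r_{e_t}m)=0$, i.e.\ $r_{e_t}^{\,s}m=0$. Then $r_{e_t}^{\,s}\in\Ann_R(m)=\mathfrak{p}$, primality gives $r_{e_t}\in\mathfrak{p}$, and your induction on $t$ (replacing $r$ by $r-r_{e_t}$) finishes the argument. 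Note also that the detour through $\mathfrak{p}^*$ and the verification of its primality is unnecessary: once you show every homogeneous component of every $r\in\mathfrak{p}$ lies in $\mathfrak{p}$, you have directly shown $\mathfrak{p}$ is homogeneous, and $\mathfrak{p}^*=\mathfrak{p}$ is a by-product rather than an intermediate goal.
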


We can now prove the following equality:

\begin{lemma}\label{GradedAssAttDual}
	Let $M$ be a finitely generated graded $R$-module. Then
	\begin{equation*}
		\Att_R(H_\mathfrak{m}^i(M))=\Ass_R(\Ext_R^{n-i}(M,R))
	\end{equation*}
\end{lemma}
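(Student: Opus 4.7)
The plan is to reduce from the graded setting to the local setting by localizing at $\mathfrak{m}$, then to the complete local setting by passing to the completion $\widehat{R_\mathfrak{m}}$, and finally to invoke local duality for complete local Gorenstein rings together with the Matlis-duality identity recalled after \cref{AttSurjLemma}. First I would observe, using \cref{GradedExt}, \cref{GradedLCArt}, \cref{ArtGradedSecRep}, and \cref{HomogAssPrime}, that both sets in the statement consist of homogeneous primes, and hence are automatically contained in $\mathfrak{m}$. Via the canonical bijection between such primes and the primes of $R_\mathfrak{m}$, the localization statement \cref{AttLoc}, the standard behaviour of $\Ass$ under localization, and the natural isomorphisms $H_\mathfrak{m}^i(M)_\mathfrak{m} \cong H_{\mathfrak{m}R_\mathfrak{m}}^i(M_\mathfrak{m})$ and $\Ext_R^{n-i}(M,R)_\mathfrak{m} \cong \Ext_{R_\mathfrak{m}}^{n-i}(M_\mathfrak{m}, R_\mathfrak{m})$ together reduce the claim to
\[
    \Att_{R_\mathfrak{m}}\!\bigl(H_{\mathfrak{m}R_\mathfrak{m}}^i(M_\mathfrak{m})\bigr) = \Ass_{R_\mathfrak{m}}\!\bigl(\Ext_{R_\mathfrak{m}}^{n-i}(M_\mathfrak{m}, R_\mathfrak{m})\bigr).
\]

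Next I would pass to $S = \widehat{R_\mathfrak{m}}$, which is complete local Gorenstein of dimension $n$. Since $H_{\mathfrak{m}R_\mathfrak{m}}^i(M_\mathfrak{m})$ is Artinian, it is naturally an $S$-module with the same lattice of submodules, and its $\Att_{R_\mathfrak{m}}$ equals the set of contractions to $R_\mathfrak{m}$ of its $\Att_S$; faithful flatness of $R_\mathfrak{m} \to S$ together with flat base change for $\Ext$ gives the analogous statement on the $\Ass$ side. Over $S$, local duality yields $H_{\mathfrak{m}S}^i(M_\mathfrak{m} \otimes_{R_\mathfrak{m}} S)^\vee \cong \Ext_S^{n-i}(M_\mathfrak{m} \otimes_{R_\mathfrak{m}} S, S)$, and the cited Matlis-duality identity (applied to the finitely generated right-hand side, using Matlis reflexivity of the Artinian left-hand side) gives the equality of $\Att_S$ and $\Ass_S$ for these two modules. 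Descending back along the contractions to $R_\mathfrak{m}$, and then along the localization bijection to $R$, yields the result.

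The main obstacle will be bookkeeping the two changes of ring. The key enabling fact for the first reduction is that all primes in sight are homogeneous, hence contained in $\mathfrak{m}$, so that the passage between $R$ and $R_\mathfrak{m}$ is a genuine bijection; the passage from $R_\mathfrak{m}$ to $S$ is more delicate since a prime of $R_\mathfrak{m}$ may be contracted from several primes of $S$, but this is harmless because only the contracted sets over $R_\mathfrak{m}$ need to agree, not the sets over $S$ themselves.
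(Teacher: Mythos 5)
Your proposal is correct and follows essentially the same route as the paper's proof: localize at $\mathfrak{m}$, invoke local duality together with the $\Ass$/$\Att$ Matlis-duality identity, and descend using the homogeneity of the primes in play (so localization is a bijection on them, via \cref{AttLoc} and \cref{HomogAssPrime}). The only real difference is that you spell out the intermediate passage through $\widehat{R_\mathfrak{m}}$, whereas the paper applies \cite[Corollary 10.2.20]{brodmannLocalCohomologyAlgebraic2013} directly at the level of $R_\mathfrak{m}$ and leaves that completion step implicit.
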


\begin{proof}
	By the usual Local Duality, \cite[Corollary 10.2.20]{brodmannLocalCohomologyAlgebraic2013}, and \cref{GradedDim}, we have
	\begin{align*}
		\Att_{R_\mathfrak{m}}\hspace{-0.05cm}(H_{\mathfrak{m}R_\mathfrak{m}}^i\hspace{-0.05cm}(M_\mathfrak{m}))&=\Ass_{R_\mathfrak{m}}\hspace{-0.05cm}(\Ext_{R_\mathfrak{m}}^{n-i}(M_\mathfrak{m},R_\mathfrak{m}))\\
		&=\Ass_{R_\mathfrak{m}}\hspace{-0.05cm}(\Ext_R^{n-i}(M,R)_\mathfrak{m})\\
		&=\{\mathfrak{p}R_\mathfrak{m}:\text{$\mathfrak{p}\in\Ass_R(\Ext_R^{n-i}(M,R))$ with $\mathfrak{p}\subseteq\mathfrak{m}$}\}\\
		&=\{\mathfrak{p}R_\mathfrak{m}:\mathfrak{p}\in\Ass_R(\Ext_R^{n-i}(M,R))\}
	\end{align*}
	with the final equality following since every associated prime of $\Ext_R^{n-i}(M,R)$ is homogeneous by \cref{HomogAssPrime} since it is graded by \cref{GradedExt}.

	Using \cref{AttLoc}, we also obtain
	\begin{equation*}
		\Att_{R_\mathfrak{m}}\hspace{-0.05cm}(H_{\mathfrak{m}R_\mathfrak{m}}^i\hspace{-0.05cm}(M_\mathfrak{m}))=\Att_{R_\mathfrak{m}}\hspace{-0.05cm}(H_\mathfrak{m}^i(M)_\mathfrak{m})=\{\mathfrak{p}R_\mathfrak{m}:\mathfrak{p}\in\Att_R(H_\mathfrak{m}^i(M))\}
	\end{equation*}
	and so we are done.
\end{proof}

We can also describe the attached primes of the top local cohomology module of a graded module in terms of the associated primes of the module itself by proving a graded version of \cite[Theorem 2.2]{macdonaldElementaryProofNonVanishing1972}:

\begin{theorem}\label{GradedTopLCAtt}
	Let $M$ be a finitely generated graded $R$-module of dimension $d$. Then
	\begin{equation*}
		\Att_R(H_\mathfrak{m}^d(M))=\{\mathfrak{p}\in\Ass_R(M):\dim(R/\mathfrak{p})=d\}
	\end{equation*}
\end{theorem}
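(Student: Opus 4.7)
The plan is to reduce the statement to Macdonald's original (local) theorem \cite[Theorem 2.2]{macdonaldElementaryProofNonVanishing1972} by localising at $\mathfrak{m}$, and then translate back using the graded/homogeneous structure that has been set up throughout the appendix.

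First I would localise $H_\mathfrak{m}^d(M)$ at $\mathfrak{m}$. Using the standard isomorphism $H_\mathfrak{m}^d(M)_\mathfrak{m}\cong H_{\mathfrak{m}R_\mathfrak{m}}^d(M_\mathfrak{m})$ together with \cref{GradedDim} (to ensure $\dim_{R_\mathfrak{m}}(M_\mathfrak{m})=d$), Macdonald's local theorem applied to the Noetherian local ring $R_\mathfrak{m}$ yields
\begin{equation*}
\Att_{R_\mathfrak{m}}\hspace{-0.05cm}(H_{\mathfrak{m}R_\mathfrak{m}}^d\hspace{-0.05cm}(M_\mathfrak{m}))=\{\mathfrak{p}R_\mathfrak{m}:\mathfrak{p}\in\Ass_R(M),\ \dim(R_\mathfrak{m}/\mathfrak{p}R_\mathfrak{m})=d\}.
\end{equation*}
Here I would justify the rewriting $\Ass_{R_\mathfrak{m}}(M_\mathfrak{m})=\{\mathfrak{p}R_\mathfrak{m}:\mathfrak{p}\in\Ass_R(M)\}$ by noting that every associated prime of $M$ is homogeneous (by \cref{HomogAssPrime}) and hence contained in $\mathfrak{m}$, so that contraction/extension along $R\to R_\mathfrak{m}$ is a bijection on the associated primes.

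Next I would translate the dimension condition. Since each such $\mathfrak{p}\in\Ass_R(M)$ is homogeneous, $R/\mathfrak{p}$ is graded and \cref{GradedDim} gives $\dim(R/\mathfrak{p})=\dim_{R_\mathfrak{m}}((R/\mathfrak{p})_\mathfrak{m})=\dim(R_\mathfrak{m}/\mathfrak{p}R_\mathfrak{m})$, so the right-hand side above is exactly $\{\mathfrak{p}R_\mathfrak{m}:\mathfrak{p}\in\Ass_R(M),\ \dim(R/\mathfrak{p})=d\}$.

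Finally I would use \cref{AttLoc}, which gives
\begin{equation*}
\Att_{R_\mathfrak{m}}\hspace{-0.05cm}(H_\mathfrak{m}^d(M)_\mathfrak{m})=\{\mathfrak{q}R_\mathfrak{m}:\mathfrak{q}\in\Att_R(H_\mathfrak{m}^d(M))\},
\end{equation*}
and combine it with the identity just established. The attached primes of $H_\mathfrak{m}^d(M)$ are homogeneous by \cref{GradedSecRep}, hence contained in $\mathfrak{m}$, so the assignment $\mathfrak{q}\mapsto\mathfrak{q}R_\mathfrak{m}$ is injective on $\Att_R(H_\mathfrak{m}^d(M))$ and on the candidate set on the right; the desired equality then follows by comparing the two descriptions of $\Att_{R_\mathfrak{m}}(H_\mathfrak{m}^d(M)_\mathfrak{m})$. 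The only mildly delicate point is ensuring that homogeneity is invoked in the right two places (for $\Ass_R(M)$ and for $\Att_R(H_\mathfrak{m}^d(M))$) so that the localisation bijections actually recover the original sets, but this has been prepared by \cref{HomogAssPrime}, \cref{GradedSecRep}, and \cref{AttLoc}, so no new ingredient is required.
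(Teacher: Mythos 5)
Your proposal is correct and follows essentially the same route as the paper's proof: localise at $\mathfrak{m}$, apply Macdonald's local theorem, translate associated primes and dimensions back via \cref{HomogAssPrime} and \cref{GradedDim}, and translate attached primes back via \cref{AttLoc} (the paper cites this step indirectly through the proof of \cref{GradedAssAttDual}, but it is the same ingredient). The only cosmetic difference is that you spell out the injectivity of $\mathfrak{q}\mapsto\mathfrak{q}R_\mathfrak{m}$ at the end, which the paper leaves implicit.
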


\begin{proof}
	By \cite[Theorem 2.2]{macdonaldElementaryProofNonVanishing1972}, \cref{HomogAssPrime}, and \cref{GradedDim} we have
	\begin{align*}
		\Att_{R_\mathfrak{m}}\hspace{-0.05cm}(H_{\mathfrak{m}R_\mathfrak{m}}^d\hspace{-0.05cm}(M_\mathfrak{m}))&=\{\mathfrak{p}R_\mathfrak{m}\in\Ass_{R_\mathfrak{m}}\hspace{-0.05cm}(M_\mathfrak{m}):\dim(R_\mathfrak{m}/\mathfrak{p}R_\mathfrak{m})=d\}\\
		&=\{\mathfrak{p}R_\mathfrak{m}:\text{$\mathfrak{p}\in\Ass_R(M)$ with $\dim(R/\mathfrak{p})=d$}\}
	\end{align*}
	and we saw in the proof of \cref{GradedAssAttDual} that
	\begin{equation*}
		\Att_{R_\mathfrak{m}}\hspace{-0.05cm}(H_{\mathfrak{m}R_\mathfrak{m}}^i\hspace{-0.05cm}(M_\mathfrak{m}))=\{\mathfrak{p}R_\mathfrak{m}:\mathfrak{p}\in\Att_R(H_\mathfrak{m}^i(M))\}
	\end{equation*}
	for any $i\geq0$, which completes the proof.
\end{proof}

It is well-known that for Cohen-Macaulay local rings, the height of an ideal is equal to its codimension. We have the following graded analogue:

\begin{lemma}\label{GradedCodim}
Let $\mathfrak{a}$ be a homogeneous ideal of $R$. Then
	\begin{equation*}
		\hgt_R(\mathfrak{a})=n-\dim(R/\mathfrak{a})
	\end{equation*}
\end{lemma}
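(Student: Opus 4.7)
The plan is to reduce to the classical (local) Cohen--Macaulay formula $\hgt + \dim = \dim R$ by localising at $\mathfrak{m}$. The ring $R = k[x_1,\ldots,x_n]$ is Cohen--Macaulay, so $R_\mathfrak{m}$ is a Cohen--Macaulay local ring of dimension $n$, and hence for any proper ideal $\mathfrak{b}$ of $R_\mathfrak{m}$ we have
\begin{equation*}
    \hgt_{R_\mathfrak{m}}(\mathfrak{b}) + \dim(R_\mathfrak{m}/\mathfrak{b}) = n.
\end{equation*}
Applying this with $\mathfrak{b} = \mathfrak{a}R_\mathfrak{m}$, the result will follow once we show that localisation at $\mathfrak{m}$ preserves both sides of the claimed equality.

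For the dimension side, \cref{GradedDim} gives
\begin{equation*}
    \dim(R/\mathfrak{a}) = \dim_{R_\mathfrak{m}}((R/\mathfrak{a})_\mathfrak{m}) = \dim(R_\mathfrak{m}/\mathfrak{a}R_\mathfrak{m}),
\end{equation*}
since $R/\mathfrak{a}$ is a graded $R$-module.

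For the height side, I would argue $\hgt_R(\mathfrak{a}) = \hgt_{R_\mathfrak{m}}(\mathfrak{a}R_\mathfrak{m})$ by using that every minimal prime over a homogeneous ideal is homogeneous, and hence contained in $\mathfrak{m}$. Concretely, by definition
\begin{equation*}
    \hgt_R(\mathfrak{a}) = \min\{\hgt_R(\mathfrak{p}) : \mathfrak{p} \text{ is a minimal prime of } \mathfrak{a}\},
\end{equation*}
and each such $\mathfrak{p}$ is homogeneous (for instance, the minimal primes of $\mathfrak{a}$ are among the associated primes of $R/\mathfrak{a}$, which are homogeneous by \cref{HomogAssPrime}), so $\mathfrak{p} \subseteq \mathfrak{m}$. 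Since localisation at a prime containing $\mathfrak{p}$ preserves the height of $\mathfrak{p}$, and the minimal primes of $\mathfrak{a}R_\mathfrak{m}$ are exactly the expansions $\mathfrak{p}R_\mathfrak{m}$ of those minimal primes of $\mathfrak{a}$ contained in $\mathfrak{m}$ (here, all of them), the two heights agree.

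Combining these two reductions with the Cohen--Macaulay formula in $R_\mathfrak{m}$ gives $\hgt_R(\mathfrak{a}) = n - \dim(R/\mathfrak{a})$. The only mildly delicate point is the statement that minimal primes over a homogeneous ideal are homogeneous (and that this lets us identify heights before and after localisation at $\mathfrak{m}$); beyond that the proof is essentially a bookkeeping exercise invoking \cref{GradedDim}, \cref{HomogAssPrime}, and the Cohen--Macaulayness of $R$.
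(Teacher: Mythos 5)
Your proof is correct and reaches the same reduction-to-the-local-case structure as the paper, but handles the height side differently. The paper's proof runs entirely through the grade machinery: it uses $\hgt_R(\mathfrak{a})=\grade_R(\mathfrak{a},R)$ (valid since $R$ is Cohen--Macaulay, \cite[Corollary 2.1.4]{brunsCohenMacaulayRings2005}), then passes grade through localisation at $\mathfrak{m}$ (\cite[Proposition 1.5.15(e)]{brunsCohenMacaulayRings2005}), applies the local Cohen--Macaulay identity $\grade = \dim R_\mathfrak{m} - \dim R_\mathfrak{m}/\mathfrak{a}_\mathfrak{m}$, and finishes with \cref{GradedDim}. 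You instead argue directly that $\hgt_R(\mathfrak{a})=\hgt_{R_\mathfrak{m}}(\mathfrak{a}R_\mathfrak{m})$ by observing that the minimal primes of $\mathfrak{a}$ are homogeneous (via \cref{HomogAssPrime}) and hence contained in $\mathfrak{m}$, so that the usual prime-correspondence for localisation preserves height; you then invoke the local Cohen--Macaulay formula and \cref{GradedDim} just as the paper does. Your route is a touch more elementary in that it avoids any mention of grade and instead exploits the homogeneity of minimal primes directly, at the cost of spelling out the prime-correspondence bookkeeping; the paper's route is more compact because the relevant compatibilities of grade with localisation are already packaged in cited results. Both are complete and correct.
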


\begin{proof}
	We have
	\begin{equation*}
		\hgt_R(\mathfrak{a})\overset{\text(1)}{=}\grade_R(\mathfrak{a},R)\overset{\text(2)}{=}\grade_{R_\mathfrak{m}}\hspace{-0.05cm}(\mathfrak{a}_\mathfrak{m},R_\mathfrak{m})\overset{\text{(3)}}{=}\dim_{R_\mathfrak{m}}\hspace{-0.05cm}(R_\mathfrak{m})-\dim_{R_\mathfrak{m}}\hspace{-0.05cm}(R_\mathfrak{m}/\mathfrak{a}_\mathfrak{m})\overset{\text{(4)}}{=}n-\dim(R/\mathfrak{a})
	\end{equation*}
	where (1) follows from \cite[Corollary 2.1.4]{brunsCohenMacaulayRings2005}, (2) follows from \cite[Proposition 1.5.15(e)]{brunsCohenMacaulayRings2005}, (3) follows from \cite[Theorem 2.1.2]{brunsCohenMacaulayRings2005} since $R$, and therefore $R_\mathfrak{m}$, is Cohen-Macaulay, and (4) follows from \cref{GradedDim}.
\end{proof}

It is also well-known that a local ring $(R,\mathfrak{m})$, an $R$-module $M$ is Cohen-Macaulay if and only if $H_\mathfrak{m}^i(M)$ is non-zero for $i=\dim_R(M)$, and vanishes otherwise. Furthermore, if $M$ is Cohen-Macaulay, then its associated primes are all of the same height. We will show that similar results hold in the our setting.

To do this, we will need a result which is familiar in the local case:

\begin{proposition}\thmCite[Exercise 2.1.27(c)]{brunsCohenMacaulayRings2005}\label{GradedLocCM}
	Let $M$ be a finitely generated graded $R$-module. Then $M$ is Cohen-Macaulay as an $R$-module if and only if $M_\mathfrak{m}$ is Cohen-Macaulay as an $R_\mathfrak{m}$-module.
\end{proposition}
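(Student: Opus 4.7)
The plan is to reduce the equivalence to two localization equalities, one for dimension and one for depth, and then invoke the standard characterization of Cohen-Macaulayness as $\depth = \dim$.

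By \cref{GradedDim}, we have $\dim_R M = \dim_{R_\mathfrak{m}} M_\mathfrak{m}$. For the depth, I would identify $\depth_R M$ with $\grade_R(\mathfrak{m}, M)$ (the grade of the irrelevant ideal on $M$) and $\depth_{R_\mathfrak{m}} M_\mathfrak{m}$ with $\grade_{R_\mathfrak{m}}(\mathfrak{m} R_\mathfrak{m}, M_\mathfrak{m})$. These two grades coincide by \cite[Proposition 1.5.15(e)]{brunsCohenMacaulayRings2005}, which is exactly the ingredient used in the proof of \cref{GradedCodim} (there applied to $R$, but the same argument applies to $M$). Hence $\depth_R M = \dim_R M$ if and only if $\depth_{R_\mathfrak{m}} M_\mathfrak{m} = \dim_{R_\mathfrak{m}} M_\mathfrak{m}$, which yields the asserted equivalence.

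The main obstacle is a bookkeeping one rather than a deep one: reconciling the equality $\depth = \dim$ with whichever convention is taken for Cohen-Macaulayness of a module over the non-local ring $R$. If CM-ness of $M$ is defined by requiring $M_\mathfrak{p}$ to be Cohen-Macaulay for every $\mathfrak{p} \in \Supp(M)$, then one must show this is equivalent to CM-ness at $\mathfrak{m}$ alone. For this, I would replace an arbitrary $\mathfrak{p} \in \Supp(M)$ by its largest homogeneous subideal $\mathfrak{p}^*$, which is itself prime and still in $\Supp(M)$ by graded considerations, observe that $\mathfrak{p}^* \subseteq \mathfrak{m}$ since every homogeneous prime of $R$ is contained in the unique graded maximal ideal, and appeal to the standard compatibility of Cohen-Macaulayness with $*$-localization in \cite[Section 1.5]{brunsCohenMacaulayRings2005}. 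This reduces the Cohen-Macaulay condition at every $\mathfrak{p} \in \Supp(M)$ to the single condition at $\mathfrak{m}$, after which the depth-dimension comparison above finishes the argument.
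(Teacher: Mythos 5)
The paper itself gives no proof of this proposition: it is stated with a bare citation to \cite[Exercise 2.1.27(c)]{brunsCohenMacaulayRings2005}, so there is nothing internal to compare against. Evaluating your argument on its own terms, the first paragraph is sound and uses exactly the localization equalities that the paper deploys elsewhere (\cref{GradedDim} for dimension, \cite[Proposition 1.5.15(e)]{brunsCohenMacaulayRings2005} for grade); this correctly shows that $\depth_{R}M=\dim_{R}M$ if and only if $M_\mathfrak{m}$ is Cohen--Macaulay over $R_\mathfrak{m}$.

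The gap is in the second paragraph. You correctly flag that Cohen--Macaulayness of $M$ over the non-local ring $R$ means $M_\mathfrak{n}$ is Cohen--Macaulay for every maximal $\mathfrak{n}\in\Supp M$, so the hard direction is passing from CM at $\mathfrak{m}$ to CM at an arbitrary $\mathfrak{p}\in\Supp M$. But the reduction you sketch runs the wrong way: since $\mathfrak{p}^*\subseteq\mathfrak{p}$, the module $M_{\mathfrak{p}^*}$ is a \emph{further} localization of $M_\mathfrak{p}$ (namely $(M_\mathfrak{p})_{\mathfrak{p}^*R_\mathfrak{p}}$), so knowing $M_{\mathfrak{p}^*}$ is Cohen--Macaulay (which does follow from $M_\mathfrak{m}$ CM, as $\mathfrak{p}^*\subseteq\mathfrak{m}$) does not by itself give that $M_\mathfrak{p}$ is Cohen--Macaulay. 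Invoking ``compatibility of Cohen--Macaulayness with $*$-localization'' does not fill this hole. What is actually needed is the pair of formulas comparing $\dim M_\mathfrak{p}$ and $\depth M_\mathfrak{p}$ with the corresponding invariants at $\mathfrak{p}^*$ (both shifted by $\dim R_\mathfrak{p}/\mathfrak{p}^*R_\mathfrak{p}$), which are precisely the earlier parts of the cited exercise; with those in hand, CM at $\mathfrak{p}^*$ forces CM at $\mathfrak{p}$, and the statement follows. So the architecture of your proof is right, but the step from $\mathfrak{p}^*$ back up to $\mathfrak{p}$ needs the explicit depth/dimension comparison rather than a localization argument.
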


With this in hand, we can prove our claims:

\begin{lemma}\label{GradedCMUnmixed}
	Let $M$ be a finitely generated Cohen-Macaulay graded $R$-module. Then all associated primes of $M$ are of the same height.
\end{lemma}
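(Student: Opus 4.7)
The plan is to reduce to the well-known local statement via localisation at $\mathfrak{m}$. First I would invoke \cref{GradedLocCM} to conclude that $M_\mathfrak{m}$ is Cohen-Macaulay as an $R_\mathfrak{m}$-module. The classical unmixedness theorem for Cohen-Macaulay modules over Noetherian local rings (see e.g.\ \cite[Theorem 2.1.2]{brunsCohenMacaulayRings2005} or the standard corollary that $\Ass_{R_\mathfrak{m}}(M_\mathfrak{m})$ consists entirely of primes of height $\dim_{R_\mathfrak{m}}(R_\mathfrak{m}) - \dim_{R_\mathfrak{m}}(M_\mathfrak{m})$) then tells me that every $\mathfrak{P} \in \Ass_{R_\mathfrak{m}}(M_\mathfrak{m})$ has the same height in $R_\mathfrak{m}$.

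Next I would transfer this back to $R$. Fix any $\mathfrak{p} \in \Ass_R(M)$. Since $M$ is graded, \cref{HomogAssPrime} forces $\mathfrak{p}$ to be homogeneous, hence $\mathfrak{p} \subseteq \mathfrak{m}$. The standard compatibility of associated primes with localisation then gives $\mathfrak{p} R_\mathfrak{m} \in \Ass_{R_\mathfrak{m}}(M_\mathfrak{m})$, and the identity $\hgt_R(\mathfrak{p}) = \hgt_{R_\mathfrak{m}}(\mathfrak{p} R_\mathfrak{m})$ (which holds for any prime contained in $\mathfrak{m}$) preserves heights. Combining these, every $\mathfrak{p} \in \Ass_R(M)$ has the same height in $R$, namely $n - \dim_R(M)$ (where one can use \cref{GradedDim} and \cref{GradedCodim} to identify this common value if desired).

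The argument is essentially a routine translation, so there is no serious obstacle; the only point requiring a little care is ensuring that the localisation map $\mathfrak{p} \mapsto \mathfrak{p} R_\mathfrak{m}$ is well-defined on $\Ass_R(M)$, which is exactly where homogeneity of associated primes (\cref{HomogAssPrime}) is used.
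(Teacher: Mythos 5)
Your argument follows the paper's proof essentially step for step: localise at $\mathfrak{m}$ via \cref{GradedLocCM}, invoke the local unmixedness theorem, and transfer back using \cref{HomogAssPrime} together with the correspondence of associated primes and heights under localisation. The paper cites \cite[Theorem 17.3(i)]{matsumuraCommutativeRingTheory1986} and \cite[Corollary 1.5.8(a)]{brunsCohenMacaulayRings2005} for these last two points, but the underlying reasoning is identical.
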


\begin{proof}
	Every associated prime of $M$ is homogeneous since it is graded, and so is contained in $\mathfrak{m}$. Then
	\begin{equation*}
		\Ass_{R_\mathfrak{m}}\hspace{-0.05cm}(M_\mathfrak{m})=\{\mathfrak{p}R_\mathfrak{m}:\text{$\mathfrak{p}\in\Ass_R(M)$ with $\mathfrak{p}\subseteq\mathfrak{m}$}\}=\{\mathfrak{p}R_\mathfrak{m}:\mathfrak{p}\in\Ass_R(M)\}
	\end{equation*}
	Since $M$ is a Cohen Macaulay $R$-module, $M_\mathfrak{m}$ is a Cohen-Macaulay $R_\mathfrak{m}$-module, so every associated prime has the same height by \cite[Theorem 17.3(i)]{matsumuraCommutativeRingTheory1986}, and so we are done by applying \cite[Corollary 1.5.8(a)]{brunsCohenMacaulayRings2005} to $R$.
\end{proof}

\begin{theorem}\label{GradedCMThm}
	Let $M$ be a finitely generated graded $R$-module of dimension $d$. Then $M$ is Cohen-Macaulay if and only if $H_\mathfrak{m}^i(M)$ is non-zero for $i=d$ and vanishes otherwise.
\end{theorem}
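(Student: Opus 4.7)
The plan is to reduce to the local case at $\mathfrak{m}$ and appeal to the standard local characterisation of Cohen-Macaulay modules via vanishing of local cohomology. Nearly every ingredient needed for this reduction has already been set up in the appendix, so the work consists mostly in stringing them together.

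Concretely, I would first observe that $\dim_R(M) = \dim_{R_\mathfrak{m}}(M_\mathfrak{m}) = d$ by \cref{GradedDim}, and that $M$ is Cohen-Macaulay as an $R$-module if and only if $M_\mathfrak{m}$ is Cohen-Macaulay as an $R_\mathfrak{m}$-module by \cref{GradedLocCM}. Next, I would use the standard isomorphism $H_\mathfrak{m}^i(M)_\mathfrak{m} \cong H_{\mathfrak{m}R_\mathfrak{m}}^i(M_\mathfrak{m})$ (which already appeared in the proof of \cref{GradedLCArt}). Since $H_\mathfrak{m}^i(M)$ is graded by \cref{GradedExt}, \cref{GradedLocFF} gives
\begin{equation*}
    H_\mathfrak{m}^i(M) = 0 \iff H_\mathfrak{m}^i(M)_\mathfrak{m} = 0 \iff H_{\mathfrak{m}R_\mathfrak{m}}^i(M_\mathfrak{m}) = 0.
\end{equation*}

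Combining these observations, the vanishing pattern of $H_\mathfrak{m}^i(M)$ for all $i$ matches that of $H_{\mathfrak{m}R_\mathfrak{m}}^i(M_\mathfrak{m})$ for all $i$, and $\dim_{R_\mathfrak{m}}(M_\mathfrak{m}) = d$. Applying the classical local theorem (e.g. Grothendieck's vanishing/non-vanishing results, as in \cite[Theorem 6.1.4 and Theorem 6.2.7]{brodmannLocalCohomologyAlgebraic2013}), $M_\mathfrak{m}$ is Cohen-Macaulay iff $H_{\mathfrak{m}R_\mathfrak{m}}^i(M_\mathfrak{m})$ is non-zero precisely for $i = d$, which by the translation above is equivalent to the analogous statement for $H_\mathfrak{m}^i(M)$. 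This yields the result.

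The main ``obstacle'' is essentially cosmetic: one must be careful that $H_\mathfrak{m}^i(M)_\mathfrak{m} \cong H_{\mathfrak{m}R_\mathfrak{m}}^i(M_\mathfrak{m})$ and that $\mathfrak{m}$ is the only maximal ideal relevant to the vanishing of the graded module $H_\mathfrak{m}^i(M)$. Both of these are taken care of by \cref{GradedLocFF} and the commutation of local cohomology with localisation; the only real content of the proof is bookkeeping around the graded/local translation.
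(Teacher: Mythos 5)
Your proposal is correct and follows essentially the same route as the paper's proof: reduce to the local case at $\mathfrak{m}$ via \cref{GradedLocCM} and \cref{GradedDim}, apply the standard local characterisation, and transfer the vanishing back using \cref{GradedLocFF}. The only difference is that you spell out the intermediate steps a bit more explicitly and give a precise reference for the classical local theorem.
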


\begin{proof}
	By \cref{GradedLocCM}, $M$ is Cohen-Macaulay if and only if $M_\mathfrak{m}$ is Cohen-Macaulay as an $R_\mathfrak{m}$-module, which by the usual result on local rings holds if and only if $H_{\mathfrak{m}_\mathfrak{m}}^d\hspace{-0.05cm}(M_\mathfrak{m})\neq0$ and vanishes otherwise, since $\dim_{R_\mathfrak{m}}\hspace{-0.05cm}(M_\mathfrak{m})=d$ by \cref{GradedDim}. Then we are done by \cref{GradedLocFF}.
\end{proof}

\pagebreak

We can also characterise depth (meaning grade with respect to $\mathfrak{m}$) as in the local case:

\begin{theorem}\label{GradedLCDepthThm}
	Let $M$ be a finitely generated graded $R$-module. Then
	\begin{equation*}
		\depth_R(M)=\min\{i\geq0:H_\mathfrak{m}^i(M)\neq0\}
	\end{equation*}
\end{theorem}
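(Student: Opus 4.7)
The plan is to reduce this to the classical local statement by localizing at $\mathfrak{m}$, and then transfer the result back using that a graded $R$-module vanishes if and only if its localization at $\mathfrak{m}$ does. Since depth here means $\grade_R(\mathfrak{m}, M)$, and every associated prime of $M$ is homogeneous (hence contained in $\mathfrak{m}$) by \cref{HomogAssPrime}, grade behaves well under the localization $R \to R_\mathfrak{m}$.

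First I would invoke \cite[Proposition 1.5.15(e)]{brunsCohenMacaulayRings2005} (which was already used in \cref{GradedCodim}) to obtain
\begin{equation*}
\depth_R(M) = \grade_R(\mathfrak{m}, M) = \grade_{R_\mathfrak{m}}(\mathfrak{m} R_\mathfrak{m}, M_\mathfrak{m}) = \depth_{R_\mathfrak{m}}(M_\mathfrak{m}).
\end{equation*}
Next, I would apply the well-known local version of the theorem (see e.g.\ \cite[Theorem 6.2.7]{brodmannLocalCohomologyAlgebraic2013}) to the finitely generated $R_\mathfrak{m}$-module $M_\mathfrak{m}$, giving
\begin{equation*}
\depth_{R_\mathfrak{m}}(M_\mathfrak{m}) = \min\{i \geq 0 : H_{\mathfrak{m} R_\mathfrak{m}}^i(M_\mathfrak{m}) \neq 0\}.
\end{equation*}

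To finish, I would use the natural isomorphism $H_\mathfrak{m}^i(M)_\mathfrak{m} \cong H_{\mathfrak{m} R_\mathfrak{m}}^i(M_\mathfrak{m})$ (exploited already in the proof of \cref{GradedLCArt}), together with \cref{GradedLocFF} applied to the graded module $H_\mathfrak{m}^i(M)$ (which is graded by \cref{GradedExt}), to conclude that $H_{\mathfrak{m} R_\mathfrak{m}}^i(M_\mathfrak{m}) \neq 0$ if and only if $H_\mathfrak{m}^i(M) \neq 0$. Combining these three equalities yields the claim.

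There is no serious obstacle here; the statement is a direct graded analogue of a classical fact, and all the ingredients, graded faithful flatness at $\mathfrak{m}$, preservation of grade under localization, commutation of local cohomology with localization, have already appeared in the appendix. The only minor point to keep straight is that the minimum on the right-hand side is preserved index-by-index under the passage to $R_\mathfrak{m}$, which is immediate from the vanishing equivalence just noted.
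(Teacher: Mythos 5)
Your proof is correct, but it takes a genuinely different route from the paper's. The paper observes that \cite[Theorem 6.2.7]{brodmannLocalCohomologyAlgebraic2013} already holds over an arbitrary Noetherian ring (not just a local one), giving $\grade_R(\mathfrak{m},M)=\min\{i:H^i_\mathfrak{m}(M)\neq0\}$ directly; the only hypothesis to check is $\mathfrak{m}M\neq M$, which for $M\neq 0$ is supplied by the graded Nakayama lemma. You instead run the standard ``localize at $\mathfrak{m}$, apply the local theorem, transfer back'' argument used elsewhere in the appendix: $\grade$ passes to $R_\mathfrak{m}$ by \cite[Proposition 1.5.15(e)]{brunsCohenMacaulayRings2005}, local cohomology commutes with localization, and \cref{GradedLocFF} converts vanishing over $R_\mathfrak{m}$ back to vanishing over $R$. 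Both proofs are valid; the paper's is shorter because it notices the cited theorem is not restricted to local rings, so the round trip through $R_\mathfrak{m}$ is unnecessary. Your appeal to \cref{HomogAssPrime} in the first paragraph is not actually needed for the argument to go through, and you should make the hypothesis $\mathfrak{m}M\neq M$ (or, equivalently, $M\neq 0$, so $M_\mathfrak{m}\neq 0$) explicit before invoking the grade-localization fact, since that proposition in Bruns--Herzog carries this assumption.
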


\begin{proof}
	This follows from \cite[Theorem 6.2.7]{brodmannLocalCohomologyAlgebraic2013} and the graded version of Nakyama's Lemma (see, for example, \cite[Exercise 1.5.24 (a)]{brunsCohenMacaulayRings2005}, taking $N=0$).
\end{proof}

Finally, a well-known property of Gorenstein local rings holds in our setting also:

\begin{lemma}\label{GradedGorenCanonMod}
	$R$ is a canonical module for itself.
\end{lemma}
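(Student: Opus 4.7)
The plan is to invoke the standard characterisation of Gorenstein graded $k$-algebras in terms of their canonical modules. First I would observe that $R = k[x_1, \ldots, x_n]$, being a polynomial ring over a field, is regular, and in particular Gorenstein and Cohen-Macaulay.

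Next I would cite \cite[Corollary 3.6.14]{brunsCohenMacaulayRings2005}, which states that a positively $\mathbb{Z}$-graded Cohen-Macaulay $k$-algebra of finite type is Gorenstein if and only if its canonical module $\omega_R$ is isomorphic (as a graded $R$-module) to a shift $R(a)$ of the ring itself, for some $a \in \mathbb{Z}$. Applying this to our $R$, the Gorenstein property gives $\omega_R \cong R(a)$, so $R$ (up to a grading shift) serves as a canonical module for itself. The grading shift is immaterial for the way this lemma is used in the paper: in the proof of \cref{SeqCMAltChar}, it is invoked so that $\Ext_R^{n-i}(M, \omega_R)$ can be replaced by $\Ext_R^{n-i}(M, R)$, and a shift in the second argument does not change whether this $\Ext$ module is zero or Cohen-Macaulay of a given dimension.

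No substantive obstacle is anticipated; the content of the lemma is essentially a citation to a textbook fact about Gorenstein graded $k$-algebras, once one has recorded that a polynomial ring over a field is regular.
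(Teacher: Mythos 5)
Your proof is correct and takes essentially the same approach as the paper: both simply invoke a standard textbook characterisation of Gorenstein graded rings (the paper cites \cite[Corollary 14.5.16]{brodmannLocalCohomologyAlgebraic2013}, you cite \cite[Corollary 3.6.14]{brunsCohenMacaulayRings2005}). Your added remark about the grading shift being immaterial for the application in \cref{SeqCMAltChar} is a sensible clarification that the paper's terse proof leaves implicit.
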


\begin{proof}
	This follows from \cite[Corollary 14.5.16]{brodmannLocalCohomologyAlgebraic2013} since $R$ is Gorenstein.
\end{proof}

\pagebreak

\nocite{M2}
\printbibliography

\end{document}